\documentclass[11pt,twoside]{article} % For LaTeX2e

\usepackage{eqnarray}
\usepackage{amsmath}
\usepackage[utf8]{inputenc} % allow utf-8 input
\usepackage[T1]{fontenc}    % use 8-bit T1 fonts
\usepackage{booktabs}       % professional-quality tables
\usepackage{amsfonts}       % blackboard math symbols
\usepackage{nicefrac}       % compact symbols for 1/2, etc.
\usepackage{microtype}      % microtypography

\usepackage{epsf}
\usepackage{epsfig}
\usepackage{fancyhdr}
\usepackage{graphics}
\usepackage{graphicx}
\usepackage{psfrag}
\usepackage{fullpage}
\usepackage{pdfpages}
\usepackage{ragged2e}

\usepackage{url}% for url's in bib
\usepackage[colorlinks,linkcolor=magenta,citecolor=blue, pagebackref=true]{hyperref}
\usepackage{stfloats}
\renewcommand*{\backrefalt}[4]{%
    \ifcase #1 \footnotesize{(Not cited.)}%
    \or        \footnotesize{(Cited on page~#2.)}%
    \else      \footnotesize{(Cited on pages~#2.)}%
    \fi}
\usepackage{color}

\usepackage{amsthm}
\usepackage{amssymb,bbm}
\usepackage{caption}
\usepackage{subcaption}
\usepackage{algorithmic}
\usepackage{algorithm}
\usepackage{textcomp}
\usepackage{siunitx}
\usepackage{wrapfig}
\usepackage{multirow}
\usepackage{multicol}% colors

\usepackage{natbib}

\newtheorem{assumption}{Assumption}
\newtheorem{lemma}{Lemma}

\newtheorem{theorem}{Theorem}
\newtheorem{proposition}{Proposition}
\newtheorem{definition}{Definition}
\usepackage{eqnarray,amsmath}
\usepackage{booktabs}       % professional-quality tables
\usepackage{nicefrac}       % compact symbols for 1/2, etc.
\renewcommand{\arraystretch}{1.35}

\usepackage{epsf}
\usepackage{epsfig}
\usepackage{fancyhdr}
\usepackage{graphics}
\usepackage{graphicx}
\usepackage{epstopdf}
\usepackage{psfrag}
\usepackage{fullpage}
\usepackage{pdfpages}

\usepackage{enumerate}   
\usepackage{multirow}
\usepackage{bm}
\usepackage{verbatim} 

\usepackage{mathtools}

\usepackage{url}% for url's in bib
\usepackage{color}

\usepackage{amsthm}
\usepackage{amsmath}
\usepackage{amssymb,bbm}
\usepackage{caption}
\usepackage{algorithmic}
\usepackage{algorithm}
\usepackage{textcomp}
\usepackage{siunitx}
\usepackage{wrapfig}
\usepackage{algorithmic}
\usepackage{algorithm}
\usepackage{multirow}
\usepackage{multicol}% colors
\usepackage{mathtools}

\def\1{\bm{1}}

\DeclareMathAlphabet{\mathsfit}{\encodingdefault}{\sfdefault}{m}{sl}
\SetMathAlphabet{\mathsfit}{bold}{\encodingdefault}{\sfdefault}{bx}{n}

\newcommand{\Gn}{G_{n}}
\newcommand{\Gsn}{G_{*,n}}

\newcommand{\hGn}{\widehat{G}_{n}}

\newcommand{\lGn}{\overline{G}_{n}}

\newcommand{\Gs}{G_{*}}

\newcommand{\taus}{\tau^{*}}
\newcommand{\etas}{\eta^{*}}
\newcommand{\betas}{\beta^{*}}

\newcommand{\dtwo}{{
{D_2}(\Gn, \Gsn)
}}

\newcommand{\done}{D_1(\Gn,\Gsn)}

\newcommand{\xetas}{ X,\etasn }

\newcommand{\lbg}{ G}
\newcommand{\lbgs}{ \Gs}
\newcommand{\hlbgn}{ \hGn}
\newcommand{\llbgn}{ \lGn}
\newcommand{\plbg}{p_{\lbg}}
\newcommand{\plbgs}{p_{\lbgs}}
\newcommand{\phlbgn}{p_{\hGn}}
\newcommand{\barplbg}{\Bar{p}_{\lbg}}
\newcommand{\barphlbgn}{\Bar{p}_{\hlbgn}}

\newcommand{\linephalf}{\overline{\mathcal{P}}^{1/2}}

\DeclareMathOperator*{\argmax}{arg\,max}

\usepackage{eqnarray,amsmath}

\usepackage{amsthm}
\usepackage{amsmath}
\usepackage{amssymb,bbm}

\allowdisplaybreaks
\newcommand{\bbE}{\mathbb{E}}

\newcommand{\GELU}{\mathop{\mathrm{GELU}} }

\newcommand{\plbgn}{p_{ \Gn}}

\newcommand{\Ione}{
{\text{\uppercase\expandafter{\romannumeral1}}}}
\newcommand{\Itwo}{
{\text{\uppercase\expandafter{\romannumeral2}}}
}
\newcommand{\Ithree}{
{\text{\uppercase\expandafter{\romannumeral3}}}
}
\newcommand{\Ifour}{
{\text{\uppercase\expandafter{\romannumeral4}}}
}
\newcommand{\Ifive}{
{\text{\uppercase\expandafter{\romannumeral5}}
}}

\newcommand{\betan}{\beta_n}
\newcommand{\betasn}{\beta_n^*}

\newcommand{\etan}{\eta_{n}}
\newcommand{\etasn}{\eta^{*}_{n}}

\newcommand{\detasn}{\Delta\eta^{*}_{n}}

\newcommand{\etani}{\eta_{ni}}
\newcommand{\etasni}{\eta^{*}_{ni}}
\newcommand{\detani}{\Delta\eta_{ni}}
\newcommand{\detasni}{\Delta\eta^{*}_{ni}}

\newcommand{\etanj}{\eta_{nj}}
\newcommand{\etasnj}{\eta^{*}_{nj}}

\begin{document}

% \begin{center}

% {\bf{\LARGE{Does Contamination Statistically Affect Pre-trained Model: Insights from Mixture of Experts?}}}
  
% %\vspace*{.2in}
% %{\large{
% %\begin{tabular}{ccc}
% %Huy Nguyen$^{\dagger}$ & Nhat Ho$^{\dagger}$
% %\end{tabular}
% %}}

% \vspace*{.2in}

% \begin{tabular}{c}
% The University of Texas at Austin$^{\dagger}$
% \end{tabular}

% \today

% \vspace*{.2in}
% \end{center}

\begin{center}

{\bf{\Large{
{Improving Minimax Estimation Rates 
% \\ \vspace{0.2cm} 
for Contaminated Mixture of \\ \vspace{0.1cm}
Multinomial 
Logistic Experts 
% \\ \vspace{0.2cm}
via Expert Heterogeneity} 
% \\ \vspace{0.2cm}
% Discrete-Contaminated Mixture of Experts
% \\ \vspace{0.2cm}
% Mixture of Experts
}}}

\vspace*{.2in}
{{
% \begin{tabular}{cccccc}
% &Fanqi Yan$^{\star}$ & Dung Le$^{\star}$ &Trang Pham \\ 
% & Huy Nguyen && Nhat Ho 
% \end{tabular}
\begin{tabular}{ccccc}
Fanqi Yan$^{\star}$ & Dung Le$^{\star}$ & Trang Pham & Huy Nguyen & Nhat Ho
%\multicolumn{3}{c}{Huy Nguyen \quad Nhat Ho}
\end{tabular}
}}

\vspace*{.2in}

\begin{tabular}{c}
The University of Texas at Austin
\end{tabular}

\vspace*{.2in}

\today

\vspace*{.2in}

\end{center}

\begin{abstract}
Contaminated mixture of experts (MoE) is motivated by transfer learning methods where a pre-trained model, acting as a frozen expert, is integrated with an adapter model, functioning as a trainable expert, in order to learn a new task. Despite recent efforts to analyze the convergence behavior of parameter estimation in this model, there are still two unresolved problems in the literature. First, the contaminated MoE model has been studied solely in regression settings, while its theoretical foundation in classification settings remains absent. Second, previous works on MoE models for classification capture pointwise convergence rates for parameter estimation without any guaranty of minimax optimality. In this work, we close these gaps by performing, for the first time, the convergence analysis of a contaminated mixture of multinomial logistic experts with homogeneous and heterogeneous structures, respectively. In each regime, we characterize uniform convergence rates for estimating parameters under challenging settings where ground-truth parameters vary with the sample size. Furthermore, we also establish corresponding minimax lower bounds to ensure that these rates are minimax optimal. Notably, our theories offer an important insight into the design of contaminated MoE, that is, expert heterogeneity yields faster parameter estimation rates and, therefore, is more sample-efficient than expert homogeneity.
\end{abstract}
\let\thefootnote\relax\footnotetext{$^\star$Co-first authors}
% , $^\dagger$Co-last authors.}

\section{Introduction}
\label{sec:introduction}
Complex datasets often exhibit heterogeneous structures that a single global model struggles to capture. To address this limitation, the mixture-of-experts (MoE) framework \cite{Jacob_Jordan-1991} decomposes the prediction task into several specialized experts, each responsible for modeling a particular aspect of the data. This adaptive combination provides the MoE architecture with far greater expressive power than any single model. A gating function then evaluates the relevance of each expert for a given input and assigns appropriate weights before aggregating their outputs. From a theoretical perspective, MoE models possess universal approximation capabilities and can approximate arbitrary density functions \cite{bacharoglou2010}. This expressive capacity has also been validated empirically across a wide range of applications in natural language processing \cite{shazeer2017topk,fedus2021switch, Du_Glam_MoE,deepseekv3}, computer vision \cite{Riquelme2021scalingvision, lepikhin_gshard_2021}, multimodal learning \cite{han2024fusemoe, yun2024flexmoe}, domain generalization \cite{nguyen2025cosine,li2023sparse}, reinforcement learning \cite{ceron2024rl, chow_mixture_expert_2023} and multilingual tasks \cite{li2024moe, zhao2024sparse, zhou2025moe, cao2025m}. 

\vspace{0.5em}
\noindent
Meanwhile, contaminated MoE models are formulated to capture transfer-learning mechanisms such as low-rank adaptation \cite{hu2022lora}. Whereas all experts are trainable in standard MoE architectures, a contaminated MoE combines a frozen pre-trained expert with a trainable adapter expert responsible for learning downstream tasks. Despite their practical importance, only two prior works have investigated the theoretical foundations of contaminated MoE. First, \cite{yan2025contaminated} studied an input-free gating mixture between a distribution with known parameters and a Gaussian distribution with a learnable mean and variance. They showed that the convergence rates for estimating the mean expert and variance are inversely proportional to the rate at which the mixture weight vanishes to zero; consequently, faster vanishing weights lead to slower parameter-estimation rates. Second, \cite{yan2025on} analyzed a more realistic contaminated MoE model with a softmax gating function. Because softmax weights cannot converge to zero under compact parameter spaces and bounded input domains, the estimation rates for the mean expert and variance are no longer affected by vanishing weights. However, if the adapter expert learns representations overlapping with the pre-trained expert, the estimation rate of the gating parameters deteriorates significantly.
These theoretical insights, however, have been established only for regression settings. The classification counterpart of contaminated MoE models remains largely unexplored.

\vspace{0.5em}
\noindent
\textbf{Contributions.} Therefore, the primary objective of this paper is to establish a novel theoretical foundation for the contaminated mixture of multinomial logistic experts defined in equation~\eqref{eq:contaminated_pretrain_model_general}. More specifically, we characterize the uniform convergence rates of maximum likelihood estimators and provide guarantees of minimax optimality in a challenging scenario where the model parameters are allowed to vary with the sample size. We also examine two regimes of expert structures: the \emph{homogeneous-expert regime}, in which the adapter expert shares the same functional form as the pre-trained expert, and the \emph{heterogeneous-expert regime}, in which the two experts possess distinct structures.

\vspace{0.5em}
\noindent
\emph{1. Homogeneous-expert regime.} In this setting, the adapter model may merge into the pre-trained model, thereby diminishing the model’s ability to learn downstream tasks. In Theorem~\ref{thm:d2_mle_rate}, we show that the convergence rates of the estimators depend on the rate at which the adapter parameters approach the pre-trained parameters. Consequently, the estimation rates for the adapter and gating parameters become slower than the standard parametric order of $\widetilde{\mathcal{O}}(n^{-1/2})$. In addition, Theorem~\ref{thm:d2_minimax} establishes the corresponding minimax lower bounds, confirming that these rates are indeed minimax optimal.

\vspace{0.5em}
\noindent
\emph{2. Heterogeneous-expert regime.} In this regime, the adapter model is guaranteed not to merge with the pre-trained model. As a result, Theorem~\ref{thm:not_equal} shows that the convergence rates for estimating the adapter and gating parameters improve to the parametric order $\widetilde{\mathcal{O}}(n^{-1/2})$. Theorem~\ref{thm:d1_minimax} further provides matching minimax lower bounds, verifying the optimality of these rates.
Taken together, these theoretical results offer a key insight into the design of contaminated MoE models: expert heterogeneity leads to faster parameter-estimation rates and is therefore more sample-efficient than expert homogeneity.
% Lastly, in Section~\ref{sec:experiments}, we carry out several numerical experiments to empirically justify our theoretical results, and then conclude the paper in Section~\ref{sec:conclusion}.
% Rigorous proofs are provided in the Appendices. 
Finally, in Section~\ref{sec:experiments}, we present a series of numerical experiments that empirically validate our theoretical findings before concluding the paper in Section~\ref{sec:conclusion}. All rigorous proofs and additional experimental results are deferred to the appendices.
\renewcommand{\arraystretch}{1.6} % Increase row height
\begin{table*}[!ht]
\caption{Summary of parameter estimation rates in the contaminated mixture of multinomial logistic experts. Notice that the rates are in expectation. For the notation, please refer to equations~\eqref{eq:contaminated_pretrain_model_general} and \eqref{eq:MLE} with $\Delta\eta^*:=\eta^*-\eta_0$.
}
\centering
\begin{tabular}{ |c|c|c|c|c|c|} 
\hline
 \textbf{Regimes}
& $\boldsymbol{|\exp(\widehat{\tau}_n)-\exp(\tau^*)|}$ &$\boldsymbol{\|\widehat{\beta}_n-\beta^*\|}$ 
&$ \boldsymbol{\|\widehat{\eta}_n-\eta^*\|}$ 
\\
\hline
 Heterogeneous-expert (Thm. \ref{thm:not_equal}, \ref{thm:d1_minimax})
 & $\widetilde{\mathcal{O}} (n^{-1/2})$ 
 & \multicolumn{2}{c|}{$\ \ \widetilde{\mathcal{O}} (n^{-1/2})\ \ $}
 \\ 
 \hline
 Homogeneous-expert (Thm. \ref{thm:d2_mle_rate}, \ref{thm:d2_minimax})

& $\widetilde{\mathcal{O}} (n^{-1/2}\cdot\Vert\Delta\eta^*\Vert^{-2})$ & \multicolumn{2}{c|}{$\ \ \widetilde{\mathcal{O}} (n^{-1/2}\cdot\Vert \Delta\eta^*
 \Vert^{-1})\ \ $} 
\\ \hline
\end{tabular}
\label{table:parameter_rates}
\end{table*}

\vspace{0.5em}
\noindent
\textbf{Notation.}
% For any $n \in \mathbb{N}$, let $[n] := \{1,2,\ldots,n\}$. For a vector $u$, we denote its Euclidean norm by $\|u\|$. Given two positive sequences $(a_n)_{n\ge 1}$ and $(b_n)_{n\ge 1}$, we write $a_n = \mathcal{O}(b_n)$ or $a_n \lesssim b_n$ if there exists a constant $C>0$ such that $a_n \le C b_n$ for all $n \in \mathbb{N}$. We further write $a_n = \widetilde{\mathcal{O}}(b_n)$ to indicate that $a_n \lesssim b_n \mathrm{polylog}(b_n)$, where $\mathrm{polylog}(b_n)$ denotes a factor that is polylogarithmic in $b_n$. Finally, for any two densities $p$ and $q$ (with respect to the Lebesgue measure), the squared Hellinger distance is defined as
% $
% d_H^2(p,q) := \frac{1}{2}\int \bigl(\sqrt{p(x)}-\sqrt{q(x)}\bigr)^2 dx,
% $
% and the total variation distance is
% $
% d_V(p,q) := \frac{1}{2}\int |p(x)-q(x)| dx .
% $
For any positive integer $n$, we denote $[n]:=\{1,\ldots,n\}$ to be the set of all positive integer from 1 to $n$. For two non-negative consequences $(a_n)_{n\geq 1}$
and $(b_n)_{n\geq 1}$, we label $a_n = \mathcal{O}(b_n)$ or $a_n \lesssim b_n$ if there exists a constant $C$ such that $a_n \leq Cb_n$ for all $n \in \mathbb{N}$. Similarly, $a_n = \tilde{\mathcal{O}}(b_n)$ stands for $a_n\lesssim b_n \mathrm{polylog}(b_n)$, where $\mathrm{polylog}(b_n)$ represent a polylogarithmic function of $b_n$. For a vector or a real number $u$, we denote $\|u\|_{\cdot}$ be the norm of $u$, whether exact type of this norm is specified in $\cdot$. For example, $\|u\|_1$ denotes the $1$-norm, $\|u\|_{\infty}$ denotes the $\infty$-norm, and $\|u\|_2$, or simply $\|u\|$ denotes the Euclidean norm. Lastly, 
% for two discrete measures $p = \sum_{i=1}^n p_i\delta_{a_i}$ and $q = \sum_{i=1}^n q_i\delta_{a_i}$ with the same finite support $\mathcal{S} = \{a_i\}_{i \in [n]}$, 
for two discrete measures $p$ and $q$ with the same finite support, 
we denote the Hellinger distance by $d_H(p,q) := \Big(\frac{1}{2} \sum_{i=1}^n|\sqrt{p_i}-\sqrt{q_i}|^2\Big)^{1/2}$, and the total variation distance by $d_V(p,q):=\frac{1}{2}\sum_{i=1}^n |p_i-q_i|$.

\section{Preliminaries}
\label{sec:preliminaries}
In this section, we first introduce the problem formulation and review related
work, along with the associated challenges, in
Section~\ref{sec:problem_setup}. We then investigate the fundamental properties
of the contaminated mixture of multinomial logistic experts in
Section~\ref{sec:fundamental}, focusing in particular on model identifiability
and convergence behavior.

\subsection{Problem Setup}
\label{sec:problem_setup}

% In this paper, we assume that the output $Y \in \{1, 2, \ldots, K\}$ is a discrete response variable, where $K \in \mathbb{N}$, while $X \in \mathcal{X}$ is a covariate vector having an effect on $Y$, in which $\mathcal{X}$ is a compact subset of $\mathbb{R}^d$. 
% Next, the data points $(X_1, Y_1), (X_2, Y_2), \ldots, (X_n, Y_n)$ are independently drawn from the standard softmax gating multinomial logistic contaminated mixture of experts,
% which admits the conditional probability function $p_{G_*}(Y = s  |  X)$ defined for any $s \in \{1, 2, \ldots, K\}$ as follows:

We consider a setting in which the response variable
$Y \in \{1,2,\ldots,K\}$ is discrete, with $K \in \mathbb{N}$, and
$X \in \mathcal{X} \subset \mathbb{R}^d$ is a covariate vector.
The observations $(X_1,Y_1),\ldots,(X_n,Y_n)$ are independently and identically
distributed according to a softmax-gated 
contaminated mixture of multinomial logistic experts model.
The model is characterized by the conditional probability function
$p_{G_*}(y=s |x)$, defined for all $s \in \{1,2,\ldots,K\}$ as
\begin{align}
\label{eq:contaminated_pretrain_model_general}
    &p_{ G_*}(y=s|x) 
    \nonumber 
    \\& 
    := \frac{1}{1+\exp((\beta^*)^{\top}x+\tau^*)}\cdot f_0(y=s|x,\eta_0)  
    % \nonumber \\& 
    % \hspace{1 em} 
    + \frac{\exp((\beta^*)^{\top}x+\tau^*)}{1+\exp((\beta^*)^{\top}x+\tau^*)}\cdot f(y=s|x,\eta^*),
    \nonumber
    \\
    & 
    := \frac{1}{1+\exp((\beta^*)^{\top}x+\tau^*)}\cdot \frac{\exp(h_0(x,\eta_{0s}))}{\sum_{i=1}^K \exp(h_0(x,\eta_{0i}))} 
    % \nonumber 
    % \\& \hspace{1 em} 
    + \frac{\exp((\beta^*)^{\top}x+\tau^*)}{1+\exp((\beta^*)^{\top}x+\tau^*)}\cdot 
    \frac{\exp(h(x,\eta_{s}^*))}{\sum_{j=1}^K \exp(h(x,\eta_{j}^*))}.
\end{align}

% \begin{align}
% g_{G^*}(Y = s  |  X) = 
% & \underbrace{
% \frac{e^{\beta_{11}^\top X + \beta_{01}}}
% {e^{\beta_{11}^\top X + \beta_{01}} + e^{\beta_{12}^\top X + \beta_{02}}}
% }_{\text{gating weight for expert 1}} \cdot
% \underbrace{
% \frac{e^{a_{1s} + b_{1s}^\top X}}{\sum_{\ell=1}^K e^{a_{1\ell} + b_{1\ell}^\top X}}
% }_{\text{expert 1 prediction for class } s} 
% % \nonumber
% % \\
% + 
% & \underbrace{
% \frac{e^{\beta_{12}^\top X + \beta_{02}}}
% {e^{\beta_{11}^\top X + \beta_{01}} + e^{\beta_{12}^\top X + \beta_{02}}}
% }_{\text{gating weight for expert 2}} \cdot
% \underbrace{
% \frac{e^{a_{2s} + b_{2s}^\top X}}{\sum_{\ell=1}^K e^{a_{2\ell} + b_{2\ell}^\top X}}
% }_{\text{expert 2 prediction for class } s}
% \end{align}

% Above, the pre-trained model $f_0(y=s|x,\eta_0)$ corresponds to as a fixed and known 
% multinomial logistic regression 
% parametrized by the $\eta_0 = (\eta_{01}, \cdots\eta_{0K})\in \mathbb{R}^{q\times K}$, where $\eta_{0i}\in \mathbb{R}^{q}, i\in[K]$. 
% $f_0(\cdot)$ is the softmax of the pre-trained expert $h_0(\cdot)$.

\vspace{0.5em}
\noindent
\textbf{Assumptions.}
% Throughout the paper, we impose the following assumptions on the data-generating
% process and the model parameters.
For theoretical analysis, we adopt the following modeling, parameter-space, and
sampling assumptions and conventions throughout the paper:

% \textit{(i) Expert assumptions.} 
% The pre-trained model $f_0(y=s  |  x,\eta_0)$ is fixed and known,
% and takes the form of a multinomial logistic regression parameterized
% In particular,
% $f_0(\cdot)$ is induced by applying a softmax transformation to the
% pre-trained expert $h_0(\cdot)$.
% On the other hand, the adapter model $f(y=s  |  x,\eta^*)$ is specified as a
% multinomial logistic model whose parameters
% The induced class probabilities arise from a softmax
% normalization of the adapter expert  $h(\cdot)$, which determines the
% functional form of $f(\cdot)$.

\vspace{0.5em}
\noindent
\textit{(i) Model specification and expert structure.} 
The pre-trained model $f_0(y=s | x,\eta_0)$ is fixed and known, and takes the
form of a multinomial logistic regression which is
induced by applying a softmax transformation to a pre-trained expert function
$h_0(\cdot;\eta_0)$. 
The adapter model $f(y=s | x,\eta^*)$ is also specified as a multinomial
logistic model, whose class probabilities are obtained via a softmax
normalization of an adaptive expert function $h(\cdot;\eta^*)$. Moreover, $f(\cdot)$ is fully determined by the corresponding expert
function $h(\cdot)$.

\vspace{0.5em}
\noindent
\textit{(ii) Parameter space and localization.} 
% $\eta_0 = (\eta_{01},\ldots,\eta_{0K}) \in \mathbb{R}^{q \times K}$, with
% $\eta_{0i} \in \mathbb{R}^q$ for all $i \in [K]$.
% $\eta^* = (\eta_1^*,\ldots,\eta_K^*) \in \mathbb{R}^{q \times K}$ are unknown and
% adapted from data.
% In addition, the gating parameters are given by $\beta^* \in \mathbb{R}^d$ and
% $\tau^* \in \mathbb{R}$. We collect all unknown parameters associated with the
% gating function and the adapter expert into a single vector
% $G_* = (\beta^*, \tau^*, \eta^*)$, which is assumed to belong to a compact parameter
% space $\Xi \subseteq \mathbb{R}^d \times \mathbb{R} \times \mathbb{R}^{q \times K}$.
The parameters in the pre-trained model are given by
$\eta_0 = (\eta_{01},\ldots,\eta_{0K}) \in \mathbb{R}^{q\times K}$, where
$\eta_{0i} \in \mathbb{R}^q$ for all $i\in[K]$.
The adapter parameters
$\eta^* = (\eta_1^*,\ldots,\eta_K^*) \in \mathbb{R}^{q\times K}$ are unknown and
learned from data. In addition, the gating parameters are given by
$\beta^* \in \mathbb{R}^d$ and $\tau^* \in \mathbb{R}$.
We collect all unknown parameters associated with the gating function and the
adapter expert into a single vector
$G_* = (\beta^*, \tau^*, \eta^*)$, which is assumed to belong to a compact
parameter space
$\Xi \subseteq \mathbb{R}^d \times \mathbb{R} \times \mathbb{R}^{q\times K}$.

% \textit{(iii) Other Assumptions.} 
% Note that we allow these parameters to depend on the sample size $n$; however, for notational convenience, we suppress the dependence of $G_*$ on $n$ throughout the paper.
% Finally, for any vector $v = (v_1,\ldots,v_k) \in \mathbb{R}^k$, we define the
% softmax function component-wise as
% $
% \mathrm{Softmax}(v)_i := {\exp(v_i)}/{\sum_{j=1}^{k} \exp(v_j)},i \in [k].
% $
% For theoretical analysis, we assume that the covariate space $\mathcal{X}$ is bounded.
\vspace{0.5em}
\noindent
\textit{(iii) Sampling and notational assumptions.}
We allow the true parameter $G_*=(\beta^*,\tau^*,\eta^*)$ to depend on the sample
size $n$; however, for notational convenience, we suppress this dependence
throughout the paper. And we assume that the covariate
space $\mathcal X$ is bounded.
Finally, for any vector $v=(v_1,\ldots,v_k)\in\mathbb{R}^k$, we define the
softmax function component-wise as
$
\mathrm{Softmax}(v)_i := {\exp(v_i)}/{\sum_{j=1}^{k} \exp(v_j)},i \in [k].
$

\vspace{0.5em}
\noindent
\textbf{Maximum likelihood estimation (MLE).} 
We estimate the unknown parameters $G^*=(\beta^*,\tau^*,\eta^*)$ of the softmax-gated contaminated mixture of multinomial logistic experts model in \eqref{eq:contaminated_pretrain_model_general} via maximum likelihood estimation \citep{Vandegeer-2000}. Given i.i.d.\ observations $\{(X_i,Y_i)\}_{i=1}^n$, where $Y_i\in\{1,\dots,K\}$, we define the MLE as
\begin{align}
\label{eq:MLE}
    \widehat{G}_n
    :=
    (\widehat{\beta}_n,\widehat{\tau}_n,\widehat{\eta}_n)
    \in
    \argmax_{G\in\Xi}
    \sum_{i=1}^n
    \log\bigl(p_G(Y_i  |  X_i)\bigr),
\end{align}
where $p_G(Y_i  |  X_i)$ denotes the model-implied conditional probability evaluated at the realized class label $Y_i$. 
% and that the parameter space $\Xi$ is compact.

\noindent

\vspace{0.5em}
\noindent
\textbf{Related work.} %
MoE models have demonstrated strong capability in regression tasks. \cite{zeevi1998approximation} showed that, under suitable regularity conditions, mixtures of normalized ridge functions with logistic gating can approximate arbitrary functions. \cite{mendes2011convergence} extended this result to polynomial experts and provided a detailed analysis of the trade-off between the number of experts and the complexity within each expert required to attain optimal estimation rates. Mixtures of Gaussian experts with softmax gating were studied in \cite{nguyen2023demystifying}, where the authors established that, with linear adapter functions, maximum likelihood estimation achieves optimal sample complexity. They further showed that, even under overspecification—when the true number of experts is unknown or exceeded—the correct parameters remain recoverable, albeit with potentially slower convergence.
MoE frameworks have also been investigated in classification settings. \cite{chen2022theory} analyzed binary classification with deep-learning-based experts and demonstrated that performance depends critically on the cluster structure, expert nonlinearity, and router design. For general multiclass classification, \cite{nguyen2024general} developed a comprehensive framework for softmax-gated multinomial logistic MoE models and proposed a new class of modified gating functions that address limitations of linear gates, supported by rigorous theoretical guarantees.

% MoE models further provide insight into contaminated models, where a frozen pre-trained model is fine-tuned through mixtures of prompt-based experts. \cite{yan2025contaminated} studied contaminated MoE models in a Gaussian setting; however, the assumptions remained restrictive due to the use of constant gating and linear prompts. A subsequent work \cite{yan2025on} considered more realistic prompt functions and softmax gating, demonstrating that softmax routers mitigate the prompt-vanishing issue, where contributions from either the pre-training or fine-tuning component diminish. Despite these advances in regression, contaminated MoE models for classification remain largely unexplored.
\vspace{0.5em}
\noindent
\textbf{Main Challenges.}
Compared with previous studies, our analysis involves three fundamental challenges.

\vspace{0.5em}
\noindent
\emph{1. Minimax optimal convergence rates.}  We derive minimax lower bounds for all considered estimators that match the corresponding upper bounds up to logarithmic factors, thereby providing strong evidence for the near-optimality of our estimator. In contrast to prior literature \cite{nguyen2024general,chen2022theory,zeevi1998approximation}, which either does not furnish minimax lower bounds or provides them only for restricted settings, our results hold under substantially broader conditions. Moreover, unlike earlier works, we allow the true parameter of $G$ to vary with the sample size, which enables us to establish uniform bounds on the estimation error. This additional flexibility yields results that are more aligned with practical modeling scenarios.

\vspace{0.5em}
\noindent
\emph{2. More general experts.}
We study mixture-of-experts models with general neural-network expert functions, leading to results that are more directly applicable in modern practice. Previous studies \cite{nguyen2024general} primarily restrict attention to linear experts, thereby limiting the scope and applicability of their theoretical guarantees. Our analysis removes this restriction and accommodates a significantly richer class of expert functions.

\vspace{0.5em}
\noindent
\emph{3. Theoretical justification for expert heterogeneity.} The empirical advantage of heterogeneous experts over homogeneous experts has been documented in prior work \cite{wang-etal-2025-hmoe,ersoy2025hdee,chen2025hetero}. However, these studies provide only empirical observations without accompanying theoretical justification. Our work fills this gap by offering the first rigorous theoretical explanation for this phenomenon, thereby establishing a principled foundation for the use of heterogeneous experts in practice.

% \subsection{Fundamental Properties of the Contaminated Mixture of Multinomial Logistic Experts}
\subsection{Fundamental Statistical Properties }
\label{sec:fundamental}
% In this section, we study the identifiability of the standard
% softmax gating contaminated multinomial logistic MoE model and the convergence
% behavior of density estimation under that model.
In this section, we investigate two fundamental theoretical properties of the 
softmax-gated contaminated mixture of multinomial logistic experts model. Specifically, we establish the identifiability of the model parameters and characterize the convergence behavior of conditional density estimation based on maximum likelihood estimation.

% As mentioned above, when the prompt's learned skills overlap with those of the pre-trained model, estimating the prompt parameters become challening due to potential non-identifiability. To capture that issue precisely, we introduce an analytic condition called distinguishability in Definition~\ref{def:distinguishability}.
% \begin{definition}[Distinguishability]
% \label{def:distinguishability}
% We say that \( f_0 \) is \emph{distinguishable from} \( f \) if the following hold:
% for any distinct pairs of parameters \( (\eta_1, \nu_1), (\eta_2,\nu_2) \in \Theta \), if there exist measurable real-valued functions \( x \in \mathcal{X} \mapsto b_0(x) \), \( x \in \mathcal{X} \mapsto b_1(x) \), and \(  x \in \mathcal{X} \mapsto  \{c_\alpha(x)\}_{0\leq|\alpha| \leq 1}  \), where \( \alpha = (\alpha_1, \alpha_2) \in \mathbb{N}^q \times \mathbb{N} \) with \( |\alpha| = |\alpha_1| + \alpha_2 \leq 1 \) such that
% \begin{align*}
%     b_0(x)\cdot f_0(y  |  h_0(x, \eta_0), \nu_0)
%     &+b_1(x)\cdot f(y|h(x,\eta_1),\nu_1)\\
%     &+ \sum_{0\leq|\alpha| \leq 1} c_\alpha(x) \cdot 
%     \frac{\partial^{|\alpha|} f}{\partial \eta^{\alpha_1} \partial \nu^{\alpha_2}}(y  |  h(x, \eta_2), \nu_2) = 0,
% \end{align*}
% for almost every \( (x,y)  \in \mathcal{X} \times \mathcal{Y} \), then it must be the case that
%     \[
%     b_0(x) = b_1(x) = 0, \quad c_\alpha(x) = 0 \quad \text{ for all } 0\leq|\alpha| \leq 1, \quad \text{for almost every } x.
%     \]
% \end{definition}

\vspace{0.5em}
\noindent
The first result concerns the identifiability of the proposed contaminated mixture of multinomial logistic experts model.

\begin{proposition}[Identifiability]
\label{prop:identifiability}
    Let $G, G'$ be two components in $\Xi$,
    % . Suppose that $f$ is distinguishable from $f_0$
    , 
    then if 
    $p_{G}(y|x) =p_{G^\prime}(y|x) $
    holds for almost all $(x,y)\in \mathcal{X}\times\mathcal{Y}$, then we obtain $G = G^\prime $.
\end{proposition}

\noindent
The proof of Proposition \ref{prop:identifiability} is in Appendix \ref{appendix:identifiability_proof}. Given that our MLE is consistent—meaning that $\widehat{G}_n$ converges to the true underlying model $G_*$—this result shows that convergence to the ground-truth model further implies convergence to the ground-truth parameter values. We then proceed to describe the convergence properties of our estimated model with respect to the Hellinger distance.

\begin{proposition}[Density Estimation Rate]
\label{prop:density-rate}
With the MLE $\widehat{G}_n$ defined in                      equation~\eqref{eq:MLE}, the convergence rate of the
density estimate $p_{\widehat{G}_n}$ to the true density $p_{\Gs}$ is given by
\begin{align}
  \sup_{G^* \in \Xi}\mathbb{E}_{p_{G_{*},n}}\left[\mathbb{E}_X \bigl[
    d_H \bigl(p_{\widehat{G}_n}( \cdot  |  X), p_{G_*}( \cdot  |  X)\bigl)\bigr]\right] 
% \nonumber
% \\
\lesssim 
(\log(n)/n)^{1/2}.
\end{align}
\end{proposition}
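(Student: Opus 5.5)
The plan is to follow the standard empirical-process route for MLE density estimation \citep{Vandegeer-2000}, as used for MoE models in \cite{nguyen2023demystifying,nguyen2024general}. Write $\mathcal{P}(\Xi):=\{p_G(\cdot|x): G\in\Xi\}$ for the class of conditional densities. Let $\overline{\mathcal{P}}^{1/2}(\Xi)$ denote the class of square roots of the midpoints $\tfrac12(p_G+p_{G_*})$, and let $H_B(\varepsilon,\cdot,\|\cdot\|)$ be the bracketing entropy with respect to the $L^2(\mu)$-type norm, where $\mu$ is the joint law of $X$ and counting measure on $[K]$. The key tool is van de Geer's theorem (Theorem 7.4 there). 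Suppose $\Psi(\delta)$ satisfies $\Psi(\delta)\ge \int_{\delta^2/2^{13}}^{\delta}H_B^{1/2}(t,\overline{\mathcal{P}}^{1/2}(\Xi,\delta),\|\cdot\|)\,dt\vee\delta$, $\Psi(\delta)/\delta^2$ is nonincreasing, and $\sqrt{n}\delta_n^2\ge c\Psi(\delta_n)$. Then
\begin{align*}
\mathbb{P}\bigl(\mathbb{E}_X[d_H(p_{\widehat{G}_n}(\cdot|X),p_{G_*}(\cdot|X))]>\delta\bigr)\le c\exp(-n\delta^2/c^2)
\end{align*}
holds for all $\delta\ge\delta_n$, with constants independent of $G_*$. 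This independence is what gives the supremum over $G^*\in\Xi$.

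\textbf{Step 1 (entropy bound).} I would show $H_B(\varepsilon,\mathcal{P}(\Xi),\|\cdot\|_1)\lesssim\log(1/\varepsilon)$. Here I use that $\Xi$ is compact, $\mathcal{X}$ is bounded, and $h,h_0$ are regular (continuous in $x$, Lipschitz in $\eta$). These give uniform Lipschitz continuity of $G\mapsto p_G(s|x)$ in sup norm: the sigmoid gate and softmax experts have bounded derivatives, and $|\beta^\top x+\tau|$ is uniformly bounded. An $\eta$-cover of $\Xi$ of size $O(\eta^{-(d+1+qK)})$ then yields, after setting $\eta\asymp\varepsilon$, $\varepsilon$-brackets $[\max(p_{G_i}-\varepsilon,0),\min(p_{G_i}+\varepsilon,1)]$. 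Hence $H_B\lesssim (d+1+qK)\log(1/\varepsilon)$. Bracketing in Hellinger distance of the square-root midpoint class is then controlled, since $d_H^2\le d_V$ and midpoints are bounded below by $p_{G_*}/2$. This gives $H_B(t,\overline{\mathcal{P}}^{1/2},\|\cdot\|)\lesssim\log(1/t)$.

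\textbf{Step 2 (rate).} Integrating gives $\int_{\delta^2/2^{13}}^{\delta}H_B^{1/2}\,dt\lesssim\delta\log^{1/2}(1/\delta)$. Take $\Psi(\delta)=\delta\log^{1/2}(1/\delta)$ and $\delta_n=\sqrt{\log n/n}$; one checks $\sqrt n\delta_n^2\gtrsim\Psi(\delta_n)$. The expectation bound then follows by integrating the tail, $\mathbb{E}[\,\cdot\,]\le\delta_n+\int_{\delta_n}^\infty c e^{-n\delta^2/c^2}d\delta\lesssim\delta_n$. Since every constant depends only on $\Xi$, $\mathcal{X}$, $K$ and the regularity of $h,h_0$, the bound is uniform over $G_*\in\Xi$, including $n$-dependent truths.

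\textbf{Main obstacle.} I expect the main difficulty to be Step 1 for general neural-network experts $h$. The Lipschitz-in-parameter bound requires a uniform bound on $\sup_{x,\eta}\|\nabla_\eta h(x,\eta)\|$, which I would impose as a mild regularity assumption. It holds automatically for smooth activations over compact sets. A secondary point is making van de Geer's result uniform in $G_*$; I would handle this by tracking that the entropy bound does not depend on $G_*$.
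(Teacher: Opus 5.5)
Your proposal is correct and follows essentially the same route as the paper. A parametric $\varepsilon$-cover of $\Xi$ combined with Lipschitz continuity of the sigmoid gate and softmax expert gives $H_B(\varepsilon,\mathcal{P}(\Xi))\lesssim\log(1/\varepsilon)$, using the same brackets $[\max(p_i-\varepsilon,0),\min(p_i+\varepsilon,1)]$. Van de Geer's machinery then yields a tail bound of order $\exp(-n\delta^2/c)$, and integrating the tail gives the uniform $(\log n/n)^{1/2}$ rate.

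The differences are cosmetic. The paper unrolls Theorem~7.4 by hand, using Lemmas~4.1--4.2, a peeling argument and Theorem~5.11 of van de Geer. It also uses the Lipschitz-in-$\eta$ property of $h$ implicitly, whereas you state it as an explicit regularity assumption.
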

\noindent
The above result shows that our density $p_{\widehat{G}_n}$ estimator converges, in terms of the Hellinger distance, to the true density $p_{G_*}$ at a near-parametric rate of order $\tilde{\mathcal{O}}(n^{-1/2})$. See Appendix \ref{appendix:ConvergenceRateofDensityEstimation} for the proof. 

\vspace{0.5em}
\noindent
These two properties constitute the key components of our analysis. In particular, Proposition \ref{prop:density-rate} shows that the study of the parametric convergence rate can be carried out by exploiting an equality that relates the density discrepancy to the parameter distance. The proof of this relation is based on a contradiction argument, in which we consider a sequence of $\widehat{G}_n$ densities converging to the ground-truth density. 
Recall that Proposition \ref{prop:identifiability} then ensures the convergence of the density necessarily implies convergence of the corresponding parameters to their true values.

\section{Convergence Analysis of Parameter Estimation}
\label{sec:theory}
% In this section, we present various convergence rates for the MLE estimator of the model prompt and gating parameters. 
% In Sections \ref{sec:neq} and \ref{sec:eq} we provide separate minimax analyses, depending on
% whether the structure of prompt expert $h$ is different from the frozen expert $h_0$ or not, respectively.
% In this section, we establish convergence rates for the maximum likelihood estimator (MLE) of the model’s prompt and gating parameters.

% Sections \ref{sec:neq} and \ref{sec:eq} present separate minimax analyses, corresponding to the cases where the prompt expert 
% $h$ differs from the frozen expert 
% $h_0$ and where the two coincide, respectively.

% In this section, we establish convergence rates for the maximum likelihood
% estimator (MLE) of the gating and adapter parameters.
% Sections~\ref{sec:eq} and \ref{sec:neq} develop separate minimax analyses for the \emph{homogeneous-expert} regime, in which $h$ and $h_0$ belong to the same
% function family, respectively, and the \emph{heterogeneous-expert} regime, in which the adapter expert $h$ differs
% structurally from the pretrained expert $h_0$, respectively.
In this section, we establish convergence rates for the maximum likelihood
estimator (MLE) of the gating and adapter parameters. Sections~\ref{sec:eq} and~\ref{sec:neq} present separate minimax analyses for the
\emph{homogeneous-expert} regime, in which $h$ and $h_0$ belong to the same
function family, and the \emph{heterogeneous-expert} regime, in which the
adapter expert $h$ differs structurally from the pretrained expert $h_0$,
respectively.

\subsection{Homogeneous-expert regime}
\label{sec:eq}
% We now turn to the homogeneous-expertsregime, in which disentanglement becomes
% even more challenging, as the pretrained and adapter experts belong to the
% same function family.
% If the pre-trained and prompt model use the same expert function, i.e. $h_0=h$.
% Under this setting, the prompt model may converge to the pre-trained model.
% We will thus focus on this challenging scenario. 
% We begin by considering the homogeneous-expertsregime, in which the pretrained expert $h_0$ differs structurally from the adapter expert $h$.
% We begin by studying the convergence properties of the MLE in the
% homogeneous-expertsregime, in which the adaptive and pretrained experts share
% the same functional form.

We begin by studying the convergence properties of the MLE in the
homogeneous-expert regime, in which the adapter and pretrained experts share
the same functional form. This setting corresponds to the standard mixture-of-experts
framework, where all experts belong to the same model class but differ only in
their parameter values.

% Building on the density estimation rate established in Proposition~\ref{prop:density-rate}, our goal is to construct a loss function between the MLE $\widehat{G}_n$ and the ground-truth parameters $G_*$ that can be upper bounded by the Hellinger distance between the corresponding model densities, thereby enabling the derivation of parameter estimation rates.

% In the homogeneous-expertsregime, disentangling the contributions of the pre-trained and adapter experts is substantially is challenging, since the pre-trained and adaptive models belong to the same function family, that is, $h_0 = h$, and thus share the same expert function. As a consequence, the adapter expert may converge to the pre-trained expert, and the resulting model can effectively collapse to the pre-trained one. This setting represents a particularly challenging scenario for disentanglement, and we therefore focus our analysis on this regime.

\vspace{0.5em}
\noindent
In the homogeneous-expert regime, disentangling the respective contributions of
the pretrained and adapter experts becomes substantially challenging.
Indeed, since both experts belong to the same function family, 
% that is,
% $h_0 = h$, 
the adapter expert can converge toward the pretrained one, causing
the resulting mixture model to effectively collapse to the pretrained expert.
This potential degeneracy makes the homogeneous-expert regime particularly
demanding from an inferential standpoint, and it is therefore the primary
focus of our subsequent analysis.

% In the homogeneous-expertsregime, the pre-trained and adapter experts share the same functional form, that is, $h_0 = h$, and thus belong to the same softmax model family.
% In particular, under this setting, it is possible for the adapter expert parameters $\eta^*$ to converge to the pre-trained parameters $\eta_0$ as the sample size $n$ increases. When this occurs, the corresponding expert distributions will become $f(\cdot | x, \eta^*)  \longrightarrow  f_0(\cdot | x, \eta_0)$, 
% % uniformly in $x\in\mathcal{X}$, 
% and the two frozen and adaptive terms in equation \eqref{eq:contaminated_pretrain_model_general} become asymptotically indistinguishable. As a consequence, the discrete contaminated MoE model may effectively collapse to the pre-trained model, regardless of the value of the gating function.

\vspace{0.5em}
\noindent
In this regime, the adapter expert parameters $\eta^*$ may converge to the
pretrained parameters $\eta_0$ as the sample size $n$ grows. In such a case,
the associated expert distributions satisfy
$f(\cdot | x,\eta^*) \to f_0(\cdot | x,\eta_0)$, and the pretrained and
adapter components in \eqref{eq:contaminated_pretrain_model_general} become
asymptotically indistinguishable. Consequently, the softmax-gated contaminated mixture of multinomial logistic experts model may effectively collapse to the pretrained model,
irrespective of the value of the gating function.
This collapse has important implications for statistical estimation. Since the gating parameters $(\beta,\tau)$ influence the conditional distribution only through the contrast between the two components, when $\eta^*$ is close to $\eta_0$, variations in the gating function induce only negligible changes in the overall density. 
% Consequently, the likelihood surface becomes nearly flat in the gating directions, rendering the gating parameters weakly identifiable. 
% In this regime, one therefore expects the estimation of $\tau,\beta$ and $\eta$ to be substantially more difficult and to exhibit slower convergence rates.
 This suggests that parameter estimation in the
homogeneous-expert regime is inherently  delicate.

\vspace{0.5em}
\noindent
Building on the density estimation rate established in
Proposition~\ref{prop:density-rate}, our objective is to translate
distributional convergence into quantitative guarantees for parameter
estimation in the homogeneous-expert regime. To this end, we decompose the
density discrepancy $p_G - p_{G_*}$ into linearly independent components with
respect to the parameters via a Taylor expansion. This approach requires a
careful analysis of the conditional density
$g(y=s  |  x; \beta, \eta)
:= \exp(\beta^\top x) 
{\exp(h(x,\eta_s))}/{\sum_{j=1}^K \exp(h(x,\eta_j))}$.
Due to the homogeneity, the mapping from parameters to the induced
density cannot, in general, be resolved using only first-order information.
Consequently, the parameter--density relationship must be characterized
through higher-order derivatives of the expert function $h$ with respect to
$\eta$.

\vspace{0.5em}
\noindent
To rigorously analyze this setting, it is therefore necessary to impose additional structural assumptions on the expert function $h$. In particular, we introduce a strong identifiability condition on the expert structure, which ensures sufficient linear independence among the relevant derivative terms and enables a refined analysis of the estimation behavior when homogeneity holds.

\begin{definition}[Strong Identifiability]
\label{appendix_def:distinguish_linear_independent}  

The expert function $x \mapsto h(x,\eta)$ is said to be \emph{strongly identifiable} if it is twice differentiable with respect to the parameter vector $\eta=(\eta_1,\eta_2,\ldots,\eta_K)\in\mathbb{R}^{q\times K}$ for almost every $x\in\mathcal{X}$, and if, for any fixed $\beta\in\mathbb{R}^d$ and any collection of parameters $\eta_i\in\mathbb{R}^q$ with $i\in\{1,2,\ldots,K\}$, each of the sets of real-valued functions of $x$ specified below is linearly independent over $\mathbb{R}$. For notational simplicity, we write $h(\cdot)$ in place of $h(\cdot,\eta)$ in the sequel.

\begin{enumerate}
    \item 
    \textbf{Second-order logit-derivative pullback set:}
    \begin{align*}
        \left\{
        \frac{\partial  h}{\partial \eta_i^{(u)} }
        \frac{\partial  h}{\partial \eta_j^{(v)} },\,
        \exp(\beta^{\top}x)
        \frac{\partial  h}{\partial \eta_i^{(u)} }
        \frac{\partial  h}{\partial \eta_j^{(v)} }
        \right\}^{u,v \in [q]}_{i,j \in [K]}.
    \end{align*}
\item \textbf{First-order logit-derivative pullback set:}
    \begin{align*}
        \Bigg\{
        &\frac{\partial  h}{\partial \eta_i^{(u)}},\,
        x^{(w)}\frac{\partial h}{\partial \eta_i^{(u)}},\,
        \exp(\beta^{\top}x)\frac{\partial  h}{\partial \eta_i^{(u)}},\,
        % \\&\hspace{0.65em}
        \frac{\partial^2  h}{\partial \eta_i^{(u)}\partial \eta_i^{(v)}},\,
    \exp(\beta^{\top}x)\frac{\partial^2  h}{\partial \eta_i^{(u)}\partial \eta_i^{(v)}}
        \Bigg\}^{u,v \in [q], w \in [d]}_{i\in [K]}.
    \end{align*}
\end{enumerate}
\end{definition}

\noindent
% \paragraph{Intuition.}
The strong identifiability condition ensures that distinct perturbations of the expert parameters induce distinguishable changes in the conditional distribution, even in the homogeneous-expert regime where the pre-trained and adapter experts share the same functional form. The second-order logit-derivative pullback set captures quadratic interaction directions arising from the second-order derivatives of the softmax density with respect to the logits, which become essential when first-order effects alone are insufficient to separate nearby parameter values. The first-order logit-derivative pullback set, on the other hand, collects all directions induced by the first-order sensitivity of the density to the logits, together with modulation by the covariates and the gating mechanism. Requiring linear independence of both sets guarantees that neither linear nor quadratic effects can cancel at the density level, thereby restoring identifiability and enabling meaningful control of parameter estimation errors when homogeneity holds.

% \noindent
% \textbf{Examples.} The expert functions $h(x, \eta) = \mathrm{GELU}(\eta^\top x)$, $h(x, \eta) = \mathrm{sigmoid}(\eta^\top x)$, and $h(x, \eta) = \tanh(\eta^\top x)$ satisfy the strong identifiability condition, as their nonlinearities avoid degeneracies. 
% In contrast, $h(x, \eta) = \mathrm{ReLU}(\eta^\top x)$ fails the second-order independence condition, as the second-order derivatives vanish almost everywhere.
% Another failure case arises when $h(x, \eta) = \sigma(a^\top x + b)$, where $\eta = (a, b)$ and $\sigma$ is any scalar activation function.
% This leads to $\partial h / \partial a = x \cdot \partial h / \partial b$, directly violating Condition~2.
% \\
\vspace{0.5em}
\noindent
\textbf{Examples.}
Under the general assumptions adopted in this paper, the following expert
functions satisfy the strong identifiability condition in
Definition~\ref{appendix_def:distinguish_linear_independent}:
$h(x,\eta)=\mathrm{GELU}(\eta^\top x)$,
$h(x,\eta)=\mathrm{sigmoid}(\eta^\top x)$, and
$h(x,\eta)=\tanh(\eta^\top x)$.
These activation functions are smooth and nonlinear, and their first- and
second-order derivatives do not vanish on sets of positive measure. As a
consequence, the associated logit-derivative pullback sets are linearly
independent almost everywhere. Notably, GELU is the standard activation used in
Transformer feedforward networks
\citep{hendrycks2016gaussian,devlin2019bert,brown2020language}.

\vspace{0.5em}
\noindent
In contrast, the expert function $h(x,\eta)=\mathrm{ReLU}(\eta^\top x)$ fails to
satisfy strong identifiability. This failure arises because ReLU is not twice
differentiable and its second-order derivatives vanish almost everywhere, which
causes the second-order pullback set to degenerate. Another failure mode occurs
for expert functions of the form $h(x,\eta)=\sigma(a^\top x+b)$, where
$\eta=(a,b)$ and $\sigma$ is any scalar activation function. In this case,
$\partial h/\partial a = x  \partial h/\partial b$, which induces an exact
linear dependence in the first-order pullback set and violates condition~2 of
Definition~\ref{appendix_def:distinguish_linear_independent}.

\noindent
Overall, strong identifiability favors smooth, nonlinear expert
parameterizations in which distinct parameter components induce genuinely
independent functional variations with respect to the covariates, as is typical
in modern large language models.

\vspace{0.5em}
\noindent
To analyze the convergence rates of the MLE under the strong identifiability assumption, we define the following discrepancy measure between $G$ and $G_*$ for the homogeneous expert regime:
\begin{align}
     &D_1(G,G_*) :
     =
     \nonumber\\&
    \exp(\tau)
    \sum_{i=1}^{K}  \|\Delta\eta_{i} \|^2 
    +
    \exp(\tau^*)
    \sum_{i=1}^{K}  \|\Delta\eta^*_{i} \|^2 
    % \nonumber\\&
    -\min\{\exp(\tau), \exp(\tau^*)\}
    \Big(
    \sum_{i=1}^{K} \|\Delta\eta_{i} \|^2 
    +
    \sum_{i=1}^{K} \|\Delta\eta^*_{i} \|^2 
    \Big)
    \nonumber\\&
    +
    \sum_{i=1}^{K}
    \Big[ 
    (\exp(\tau)
     \|\Delta\eta_{i} \| 
    +\exp(\tau^*)
      \|\Delta\eta^*_{i} \| )
    % \nonumber\\&\hspace{3cm} 
    \times
    \big( \|\beta - \beta^*\| +   \|\eta_{i} - \eta_{i}^*\|  \big)\Big],
\end{align}
where we denote $\Delta\eta_i=\eta_i-\eta_{0i}$ and $\Delta\eta_i^*=\eta_i^*-\eta_{0i}$.

% Now we could get the the convergence behavior of the MLE in the homogeneous expert regime via the following theorem.
\vspace{0.5em}
\noindent
The convergence behavior of the MLE in the homogeneous-expert regime is
established by the following theorem.
% \begin{theorem}[Homogeneous Expert Regime]
% Suppose that the pre-trained expert and the prompt expert belong to the same function family.
% That is, there exists a function class $\mathcal H$ such that
% $h_0(\cdot;\eta_0)\in\mathcal H$ and $h(\cdot;\eta)\in\mathcal H$, and in particular $h_0 = h$.
% We refer to this setting as the \emph{homogeneous expert regime}.
% Then, there exists a positive constant $C_2$, depending on $\Xi,\eta_0,\nu_0$, such that the Hellinger lower bound
% $\mathbb{E}_X \left[ d_H \left(p_G(\cdot\mid X),  p_{G_*}(\cdot\mid X)\right) \right]
%  \ge 
% C_2   D_2(G,G_*)$
% holds for all parameters $G\in\Xi$.
% As a result, we obtain
% \end{theorem}
\begin{theorem}
\label{thm:d2_mle_rate}
%Suppose that the pre-trained expert and the adapter expert belong to the same function family.
%That is, there exists a function class $\mathcal H$ such that
%$h_0(\cdot;\eta_0)\in\mathcal H$ and $h(\cdot;\eta)\in\mathcal H$.
%We refer to this setting as the \emph{homogeneous expert regime}.
Under the homogeneous-experts regime, suppose that the expert function $h$ satisfies the strong identifiability condition,
then there exists a positive constant $C_1$, depending on $\Xi$ and $\eta_0$, such that the Hellinger lower bound
$\mathbb{E}_X   \left[
d_H  \left(p_G(\cdot\mid X),  p_{G_*}(\cdot\mid X)\right)
\right]
 \ge 
C_1  D_1(G,G_*)$
holds for all parameters $G=(\beta,\tau,\eta)\in\Xi(l_n)$.
As a result, we obtain
\begin{align}
    \label{eq:nondis_bound_1}
    &
    \sup_{G_*\in\Xi(l_n)}
    % \hspace{-1em} 
    \mathbb{E}_{p_{\Gs,n}} 
    \Big[
        \Big(
    \sum_{i=1}^{K}
    \Vert \Delta\eta_i^*\Vert^2
    \Big)
    \times
    |\exp(\widehat{\tau}_n)
    -\exp(\tau^*)|
    \Big]
    % \nonumber\\&\hspace{4.75cm}
    \lesssim (\log(n)/n)^{1/2},\\
    \label{eq:nondis_bound_2}
    &
    \sup_{G_*\in\Xi(l_n)}
    % \hspace{-1em}
    \mathbb{E}_{p_{\Gs,n}} 
    \Big[
    \sum_{i=1}^{K}
    \Vert \Delta\eta_i^*\Vert
    \Big( \|\widehat{\beta}_n - \beta^*\| +   \|\widehat{\eta}_{n,i} - \eta_{i}^*\|  \Big)
    %     \nonumber
    % \\&\hspace{3.
    % cm}
    \times
    \exp(\tau^*)
    \Big]
    \lesssim 
    (\log(n)/n)^{1/2},
\end{align}
% for any sequence $(l_n)_{n\geq 1}$ such that $l_n/\log n\to\infty$ as $n\to\infty$, where we denote for all $i\in\{1,\ldots,K\}$ that
% \begin{align*}
%     \Xi(l_n) := 
% \Bigg\{ 
% G \in \Xi : 
% \frac{l_n 
% }{
%     \min\limits_{\substack{1 \le u \le q \\ 1 \le v \le d}}
%     \left\{ |\eta_{i,u}|^2, |\beta_{v}|^2 \right\} 
%     \sqrt{n}
% }
% \leq \exp(\tau)
% \Bigg\}.
% \end{align*}
for any sequence $(l_n)_{n\geq 1}$ such that $l_n/\log n \to \infty$ as $n\to\infty$, 
where the parameter space $\Xi(l_n)\subset\Xi$ is defined, for all $i\in\{1,\ldots,K\}$, as
\begin{align*}
    \Xi(l_n) := 
\Bigg\{ 
G \in \Xi : 
\frac{l_n 
}{
    \min\limits_{\substack{1 \le u \le q ,\, 1 \le v \le d}}
    \left\{ |\eta_{i,u}|^2, |\beta_{v}|^2 \right\} 
    \sqrt{n}
}
\leq \exp(\tau)
\Bigg\}.
\end{align*}

\end{theorem}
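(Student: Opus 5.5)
The two rate bounds \eqref{eq:nondis_bound_1}--\eqref{eq:nondis_bound_2} follow once the Hellinger lower bound $\mathbb{E}_X[d_H(p_G(\cdot|X),p_{G_*}(\cdot|X))]\ge C_1 D_1(G,G_*)$ is established uniformly over $\Xi(l_n)$. Indeed, substituting $G=\widehat{G}_n$ and taking the supremum of expectations, Proposition~\ref{prop:density-rate} gives $\sup_{G_*}\mathbb{E}[D_1(\widehat{G}_n,G_*)]\lesssim(\log n/n)^{1/2}$. Each summand of $D_1$ is nonnegative. The first three terms of $D_1$ dominate $\min\{e^{\tau},e^{\tau^*}\}\cdot 0+|e^{\tau}-e^{\tau^*}|\cdot(\text{the $\|\Delta\eta\|^2$ sum on the side with larger weight})$. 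Using $\|\Delta\eta_i\|^2\ge$ a constant multiple of $\|\Delta\eta_i^*\|^2-\|\eta_i-\eta_i^*\|^2$, one can trade the hatted sum for the starred sum, with the error absorbed by the fourth (cross) term. This yields \eqref{eq:nondis_bound_1}, and the cross term directly yields \eqref{eq:nondis_bound_2}. The condition defining $\Xi(l_n)$ guarantees that $e^{\tau}$ is not so small that the relevant quantities fall below the statistical resolution $l_n/\sqrt n$. This is exactly what is needed to keep the local argument below valid along sequences.

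For the lower bound, I would reduce to the total variation distance ($d_H\ge d_V$) and split into a local and a global part. \emph{Global part:} if $D_1(G,G_*)\ge\varepsilon'$, then by compactness of $\Xi$ and Proposition~\ref{prop:identifiability}, $\inf \mathbb{E}_X[d_V]>0$, and boundedness of $D_1$ gives the inequality. \emph{Local part:} argue by contradiction. Take sequences $G_n,G_{*,n}$ with $D_1(G_n,G_{*,n})\to0$ and $\mathbb{E}_X[d_V(p_{G_n},p_{G_{*,n}})]/D_1(G_n,G_{*,n})\to0$. Write $p_{G_n}-p_{G_{*,n}}$ after multiplying by the common denominators $(1+e^{\beta_n^\top x+\tau_n})(1+e^{\beta_*^\top x+\tau_*})$. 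Using the homogeneity $h_0=h$, express both adapter densities as Taylor expansions around $\eta_0$: to second order in $\Delta\eta_n$ and $\Delta\eta^*_n$, and to first order in $\beta_n-\beta^*$ and $\eta_n-\eta^*$. Collect coefficients in front of the functions $\partial h/\partial\eta_i^{(u)}$, $x^{(w)}\partial h/\partial\eta_i^{(u)}$, $e^{\beta^\top x}\partial h/\partial\eta_i^{(u)}$, the second-derivative terms, and the products $\partial h\,\partial h$, all multiplied by the softmax at $\eta_0$. The leading terms are of the forms $e^{\tau}\Delta\eta-e^{\tau^*}\Delta\eta^*$, $(e^\tau-e^{\tau^*})\Delta\eta\Delta\eta^\top$, and $e^{\tau^*}\Delta\eta^*(\beta-\beta^*)$. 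These reproduce, up to constants, the summands of $D_1$.

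The main step is to show that not all normalized coefficients (coefficients divided by $D_1(G_n,G_{*,n})$) vanish in the limit. I would first try to do this directly from the structure of $D_1$: the asymmetric $\min$ term is designed so that the quadratic coefficient $e^{\tau}\Delta\eta\Delta\eta^\top-e^{\tau^*}\Delta\eta^*\Delta\eta^{*\top}$ controls the first three terms. If every normalized coefficient vanishes, summing their absolute values contradicts $\sum/D_1=1$. Otherwise some limit is nonzero, and Fatou's lemma applied to $d_V/D_1\to0$ gives a vanishing linear combination of the functions in Definition~\ref{appendix_def:distinguish_linear_independent} (after separating out the softmax denominator $\sum_j e^{h(x,\eta_{0j})}$ and the $y$-dependence across $s\in[K]$). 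Strong identifiability then forces all coefficients to be zero, a contradiction. I expect the hardest part to be controlling the Taylor remainders uniformly when $G_*$ moves with $n$ and $e^{\tau^*}$ may shrink. Here the lower bound on $e^{\tau}$ from $\Xi(l_n)$, together with the boundedness of $\mathcal X$, should show the remainders are $o(D_1)$.
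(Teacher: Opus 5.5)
Your reduction of \eqref{eq:nondis_bound_1}--\eqref{eq:nondis_bound_2} to the inverse bound is fine. The first three terms of $D_1$ equal $|e^{\tau}-e^{\tau^*}|$ times the $\|\Delta\eta\|^2$-sum on the larger-weight side, and the trade to the starred sum works because $e^{\widehat{\tau}_n}/e^{\tau^*}$ is bounded on the compact $\Xi$. Your architecture (clearing denominators, Taylor expansion, normalized coefficients, Fatou, strong identifiability) is also the paper's. The gap is in the step you call the main one. With the expansion centred at $\eta_0$, the quadratic coefficient $e^{\tau}\Delta\eta\Delta\eta^\top-e^{\tau^*}\Delta\eta^*\Delta\eta^{*\top}$ does \emph{not} control the first three terms of $D_1$. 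For $\Delta\eta^*=c\,\Delta\eta$ with $c^2=e^{\tau}/e^{\tau^*}>1$ it vanishes identically, while those terms equal $(e^{\tau}-e^{\tau^*})\|\Delta\eta\|^2\neq0$. Its diagonal is a \emph{difference} of nonnegative quantities, so ``summing absolute values'' gives nothing by itself; that summation is exactly the inequality you have to prove. The paper gets positivity by centring the expansion at $(\beta^*_n,\eta^*_n)$ and assuming w.l.o.g.\ $\tau^*_n\ge\tau_n$. The diagonal second-order coefficient is then $e^{\tau_n}[(\Delta\eta_{ni}-\Delta\eta^*_{ni})^{(u)}]^2+(e^{\tau^*_n}-e^{\tau_n})[(\Delta\eta^*_{ni})^{(u)}]^2$, a sum of nonnegative terms, each of which must be $o(D_1)$; the second is exactly the min-term in this case. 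The products $e^{\tau}\|\Delta\eta\|\,\|\eta-\eta^*\|$ and $e^{\tau^*}\|\Delta\eta^*\|\,\|\eta-\eta^*\|$ then follow by combining with the first-order coefficient $e^{\tau_n}\Delta\eta_{ni}-e^{\tau^*_n}\Delta\eta^*_{ni}$. If you keep $\eta_0$ as centre, you must instead use the coefficient $e^{\tau+\tau^*}(\eta-\eta^*)$ of the $e^{\beta^\top x}\partial h$ terms, which is missing from your list of leading terms. It gives $\|\eta-\eta^*\|=o(D_1)$, hence $(e^{\tau}-e^{\tau^*})\Delta\eta^*=o(D_1)$ from the first-order coefficient. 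Together with the $x^{(w)}\partial h$ coefficient, this disposes of all parts of $D_1$.

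There are two further holes, plus a small correction. First, your local sequences only satisfy $D_1(G_n,G_{*,n})\to0$, which does not force $\eta_n,\eta^*_n\to\eta_0$. They may share a limit $\bar\eta\neq\eta_0$, with $e^{\tau_n}-e^{\tau^*_n}\to0$ and $\beta_n-\beta^*_n\to0$. Then $\Delta\eta_n\not\to0$, no second-order expansion around $\eta_0$ is available, and $D_1$ is comparable to $D_2$. This case needs the first-order, heterogeneous-type argument around $\eta^*_n$, based on distinguishing $f(\cdot\mid x,\eta_0)$ from $f(\cdot\mid x,\bar\eta)$ and its first derivatives (the paper's part~(i) and Case~2). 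Second, remainder control has nothing to do with $\Xi(l_n)$. On the compact $\Xi$, $e^{\tau^*}$ is bounded below anyway, and the paper never invokes $\Xi(l_n)$ in the inverse bound. The real difficulty is that, with your centring, the remainders $e^{\tau}R(\Delta\eta)$ and $e^{\tau^*}R(\Delta\eta^*)$ are individually third order in $\Delta\eta,\Delta\eta^*$, which can far exceed $D_1$. For example, take $\beta=\beta^*$, $\tau=\tau^*$ and $\|\eta-\eta^*\|\ll\|\Delta\eta^*\|^2$, where $D_1\asymp\|\Delta\eta^*\|\,\|\eta-\eta^*\|$. They become $o(D_1)$ only after regrouping as $(e^{\tau}-e^{\tau^*})R(\Delta\eta)+e^{\tau^*}[R(\Delta\eta)-R(\Delta\eta^*)]$ (or the mirror version, according to which weight is larger). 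You then bound the pieces by the min-term and by $o(1)\cdot(\|\Delta\eta\|+\|\Delta\eta^*\|)\|\eta-\eta^*\|$. Lastly, in your global step, Proposition~\ref{prop:identifiability} cannot hold literally in this regime: when $\eta=\eta_0$, $p_G$ does not depend on $(\beta,\tau)$. What you actually need is $p_G=p_{G_*}\Rightarrow D_1(G,G_*)=0$, which is consistent with $D_1$ vanishing when $\Delta\eta=\Delta\eta^*=0$.
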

\noindent
The proof of Theorem~\ref{thm:d2_mle_rate} is in Appendix~\ref{app_proof: d2_loss}. 
% The condition imposing a lower bound on $\exp(\tau)$ in the definition of $\Xi(\ell_n)$ is necessary to guarantee that $(\widehat{\beta}_n,\widehat{\tau}_n,\widehat{\eta}_n)$ are consistent estimators of $(\beta^*,\tau^*,\eta^*)$, respectively.
% The lower-bound condition on $\exp(\tau)$ in $\Xi(\ell_n)$ is necessary to ensure consistency of the estimators, which underpins both identifiability and the validity of the asymptotic error bounds derived in this work.

\vspace{0.5em}
\noindent
The lower-bound condition on $\exp(\tau)$ in the definition of $\Xi(\ell_n)$ is necessary to guarantee that $(\widehat{\beta}_n,\widehat{\tau}_n,\widehat{\eta}_n)$ are consistent estimators of $(\beta^*,\tau^*,\eta^*)$, which is essential for ensuring identifiability and for establishing the asymptotic convergence rates of the MLE.

\vspace{0.5em}
\noindent
Theorem~\ref{thm:d2_mle_rate} implies that the convergence rates of
$(\widehat{\beta}_n,\widehat{\tau}_n,\widehat{\eta}_n)$ depend on the adapter
deviations $\{\Delta \eta_i^*\}_{i=1}^K$, yielding two main implications:

\vspace{0.5em}
\noindent
{\it (i)} At first, the bound in equation \eqref{eq:nondis_bound_1} indicates that the convergence rate of
$\exp(\widehat{\tau}_n)$ toward its ground-truth counterpart $\exp(\tau^*)$ is
slower than the near-parametric rate $\widetilde{\mathcal O}(n^{-1/2})$, as it
explicitly depends on the rate at which the adapter parameters
$\Delta \eta_i^*$ vanish.
For instance, if the dominant adapter components
$\eta_i^*$ approach their pretrained counterparts $\eta_{0i}$ at the rate
$\widetilde{\mathcal O}(n^{-1/8})$, then \eqref{eq:nondis_bound_1} implies that
$\exp(\widehat{\tau}_n)$ converges to $\exp(\tau^*)$ at the slower rate
$\widetilde{\mathcal O}(n^{-1/4})$. This phenomenon is corroborated by the
numerical experiments reported in the subsequent section.

\vspace{0.5em}
\noindent
{\it (ii)} Similarly, the convergence behavior of the estimators
$(\widehat{\beta}_n, \widehat{\eta}_n)$ is also governed by the decay rate of
the adapter parameters and is therefore slower than
$\widetilde{\mathcal O}(n^{-1/2})$. In particular, if the dominant
$\Delta \eta_i^*$ converges to zero at rate
$\widetilde{\mathcal O}(n^{-1/8})$, then the bound in
\eqref{eq:nondis_bound_2} shows that both $\widehat{\beta}_n$ and
$\widehat{\eta}_n$ converge to $\beta^*$ and $\eta^*$, respectively, at the
rate $\widetilde{\mathcal O}(n^{-3/8})$. This slower convergence is likewise
validated empirically in our numerical studies.

% In our final result, whose proof can be found in Appendix~\ref{apppf:d2_minimax}, we show that the slower  converge rates for the MLE under non-distinguishability from Theorem~\ref{eq:nondis_bound_1} are in fact essentially minimax optimal.

\vspace{0.5em}
\noindent
The near-parametric convergence rates established in
Theorem~\ref{thm:d2_mle_rate} naturally raise the question of their statistical
optimality. We answer this question by deriving matching minimax lower bounds,
as presented below.
\begin{theorem}
\label{thm:d2_minimax}
    % Suppose that $h_0=h$. Then, the minimax lower bounds
% \begin{align*}
%     \Xi(l_n) := \left\{ G = (\tau,\beta,\eta, \nu) \in \Xi : 
% \frac{
%     l_n
% }{
%     \min\limits_{1 \leq i \leq q,1 \leq j \leq d,} 
%     \left\{ |\eta^{(i)}|^2, (\nu)|^2, |\beta^{(j)}|^2 \right\} 
%     \sqrt{n}
% } \leq \exp(\tau)
% \right\}.
% \end{align*}

% \begin{align*}
%     &\inf_{\overline{G}_n }
%     \hspace{-0.2em}
%     \sup_{G\in \Xi(l_n)  }
%     \hspace{-0.5em}
%     \mathbb{E}_{p_{G,n}} \Big[ 
%     \sum_{i=1}^{K-1}\Vert \Delta\eta_i \Vert^4    
%     \times \|\exp(\overline{\tau}_n)-\exp(\tau)\|^2
%     \Big] 
%     \\&
%     \hspace{19em}
%     \gtrsim n^{-1/r},
%     \\
%     &\inf_{\overline{G}_n}
%     \hspace{-0.2em}
%     \sup_{G\in \Xi(l_n) }
%     \hspace{-0.5em}
%     \mathbb{E}_{p_{G,n}} 
%     \Big[ 
%     % \exp^2(\tau) 
%     % \\&
%     % \times
%         \sum_{i=1}^{K-1}
%     \Vert \Delta\eta_i\Vert^2
%     \big( \|\overline{\beta}_n - \beta\|^2 
%         \hspace{-0.2em}
% +       \hspace{-0.2em}
% \|\overline{\eta}_{n,i} - \eta_{i}\|^2  \big) 
%     \\&
%     \hspace{14em}
%     \times
%     \exp^2(\tau)\Big] 
%     \gtrsim n^{-1/r},
% \end{align*}
% hold for any sequence $(l_n)_{n\geq 1}$ and any $0<r < 1$, , where the infimum is over all estimators $\overline{G}_n$ taking values in $\Xi$.

Suppose that the assumptions of Theorem~\ref{thm:d2_mle_rate} hold. Then the following minimax lower bounds 
% \begin{align*}
% &    \inf_{\overline{G}_n\in \Xi }\sup_{G\in \Xi }
%     \mathbb{E}_{p_{G,n}} \Big( 
%     |\exp(\overline{\tau}_n)
%     -\exp(\tau)|^2 \Big) 
%     \gtrsim n^{-1/r},
%     \\
% &\inf_{\overline{G}_n\in \Xi }\sup_{G\in \Xi }
%     \mathbb{E}_{p_{G,n}} 
%     \Big( \exp^2(\tau) 
%     \left[ \|\overline{\beta}_n - \beta\|^2 
% + \sum_{i=1}^{K-1} \big( \overline{\eta}_{n,i} - \eta_{i}\|^2 \big) \right] 
%     \Big) 
%     \\
%     \gtrsim n^{-1/r},
% \end{align*}
\begin{align*}
    &\inf_{\overline{G}_n }
    \hspace{-0.2em}
    \sup_{G\in \Xi(l_n)  }
    \hspace{-0.5em}
    \mathbb{E}_{p_{G,n}} \Big[ 
    \sum_{i=1}^{K-1}\Vert \Delta\eta_i \Vert^4    
    \times \|\exp(\overline{\tau}_n)-\exp(\tau)\|^2
    \Big] 
    % \\&
    % \hspace{19em}
    \gtrsim n^{-1/r},
    \\
    &\inf_{\overline{G}_n}
    \hspace{-0.2em}
    \sup_{G\in \Xi(l_n) }
    \hspace{-0.5em}
    \mathbb{E}_{p_{G,n}} 
    \Big[ 
    % \exp^2(\tau) 
    % \\&
    % \times
        \sum_{i=1}^{K-1}
    \Vert \Delta\eta_i\Vert^2
    \big( \|\overline{\beta}_n - \beta\|^2 
+       
\|\overline{\eta}_{n,i} - \eta_{i}\|^2  \big) 
    % \\&
    % \hspace{14em}
    \times
    \exp^2(\tau)\Big] 
    \gtrsim n^{-1/r},
\end{align*}
hold for any $0<r < 1$.
% where the infimum is over all estimators taking values in $\Xi$.
% Here, the infimum is taken over all sequences of estimates $(\widehat{\beta}_n, \widehat{\tau}_n, \widehat{\eta}_n)$.
% Here, the infimum is taken over all sequences of estimators
% $\overline{G}_n = (\overline{\beta}_n, \overline{\tau}_n, \overline{\eta}_n)$
% taking values in the parameter space $\Xi$, and the expectation is with respect to the joint distribution $p_{G,n}$ of the $n$ i.i.d.\ observations generated under parameter $G$.
Here, the infimum is taken over all  estimators
$\overline{G}_n = (\overline{\beta}_n,\overline{\tau}_n,\overline{\eta}_n)$
that take values in the parameter space $\Xi$, and the expectation is with respect to the joint distribution $p_{G,n}$ of the $n$ i.i.d.\ observations generated under parameter $G\in \Xi(l_n) $.
\end{theorem}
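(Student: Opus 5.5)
We will use Le Cam's two-point method, the standard route to lower bounds of the form $n^{-1/r}$ for all $r<1$ in this line of work. The key step is to show that the Hellinger lower bound of Theorem~\ref{thm:d2_mle_rate} is tight in a strong sense. Concretely, we will prove that
\begin{align*}
\lim_{\varepsilon\to 0}\ \inf_{G,G_*\in\Xi(l_n)}\Big\{ \mathbb{E}_X\big[d_V\big(p_G(\cdot\mid X),p_{G_*}(\cdot\mid X)\big)\big]\big/ D_1(G,G_*)^{\,r} \;:\; D_1(G,G_*)\le\varepsilon\Big\}=0
\end{align*}
holds for every $r>1$. Given this, for each $n$ and each small $\varepsilon$ we obtain a pair $G_*$, $G_*'$ in $\Xi(l_n)$ with $D_1(G_*',G_*)=2\varepsilon$ and $\mathbb{E}_X d_V\le c\,\varepsilon^{r}$, and hence $\mathbb{E}_X d_H^2\lesssim\varepsilon^{r}$ after comparing Hellinger to TV for discrete laws with probabilities bounded below; this lower bound holds on the compact $\Xi$ and bounded $\mathcal{X}$.

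The two-point construction keeps $\beta,\eta$ equal at both points and moves only $\tau$. Fix $\Delta\eta_i\neq 0$ of small but fixed size, so that $\sum_i\|\Delta\eta_i\|^4$ is of constant order and $\Xi(l_n)$ membership is preserved; we choose $\eta_{i}$ and $\beta$ away from zero so that the constraint is met for large $n$. Then set $\exp(\tau')=\exp(\tau)+\varepsilon_n$. Because $p_{G}-p_{G'}=(\text{gate difference})\cdot(f(\cdot\mid x,\eta)-f_0(\cdot\mid x,\eta_0))$ and $f-f_0=O(\|\Delta\eta\|)$, the TV distance is of order $\varepsilon_n\|\Delta\eta\|$, while the loss is of order $\|\Delta\eta\|^2\varepsilon_n$. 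To obtain the $r$-power gap we instead let $\Delta\eta\to 0$ jointly and use the first-order Taylor expansion of $f-f_0$ in $\Delta\eta$: the second-order part is what $D_1$ weighs, so for the $\tau$-direction we pick perturbations where the leading terms cancel. This cancellation is the main obstacle. We must choose $\Delta\eta,\Delta\eta'$ and $\tau,\tau'$ so that $\exp(\tau)\Delta\eta_i=\exp(\tau')\Delta\eta_i'$ to first order. The first-order terms then match exactly and the discrepancy comes only from the quadratic terms, which are of order $\exp(\tau)\|\Delta\eta\|^2-\exp(\tau')\|\Delta\eta'\|^2$.

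For the second bound the same device applies, with perturbations of $(\beta,\eta_i)$ whose induced first-order density change is matched by a compensating shift. Concretely, we take $\eta_i'=\eta_i+t\Delta\eta_i$ type moves in a direction along which $\partial f/\partial\eta$ evaluated near $\eta_0$ is itself $O(\|\Delta\eta\|)$. With $\varepsilon_n$ chosen so that $n\,\mathbb{E}_X d_H^2\lesssim n\varepsilon_n^{r}=O(1)$, that is $\varepsilon_n\asymp n^{-1/r}$, Le Cam's lemma gives
\begin{align*}
\inf_{\overline G_n}\max_{G\in\{G_*,G_*'\}}\mathbb{E}_{p_{G,n}}\big[\mathcal{L}(\overline G_n,G)\big]\gtrsim \mathcal{L}(G_*,G_*')\,\big(1-d_V(p_{G_*,n},p_{G_*',n})\big),
\end{align*}
with $d_V(p_{G_*,n},p_{G_*',n})\le\sqrt{2n}\,\max\mathbb{E}_X d_H$ bounded away from $1$. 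Since the squared losses in the statement are comparable to $D_1^2$ restricted to the relevant components, we obtain the stated bound of order $\varepsilon_n^2$ up to the reparametrisation of $r$. We then rename $r$, which is allowed because $r$ is arbitrary in $(0,1)$, to reach $n^{-1/r}$. The delicate verification will be that the chosen pairs stay in $\Xi(l_n)$ while $\Delta\eta$ shrinks; we plan to shrink $\Delta\eta$ only polynomially slower than $\varepsilon_n$.
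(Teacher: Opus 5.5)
Your central intermediate claim has the exponent on the wrong side of $1$, and that is what sends the plan astray. For $r>1$ the assertion $\lim_{\varepsilon\to0}\inf\{\mathbb{E}_X[d_V]/D_1^{\,r}: D_1\le\varepsilon\}=0$ is false. Theorem~\ref{thm:d2_mle_rate} gives $\mathbb{E}_X[d_H]\ge C_1D_1$, and all class probabilities are bounded below on the compact $\Xi$ and bounded $\mathcal X$, so $d_H\lesssim d_V$; hence the ratio is $\gtrsim D_1^{1-r}\to\infty$.

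The theorem only claims the near-parametric rate $n^{-1/r}$ with $r<1$. For that you need the opposite, much weaker fact: $\mathbb{E}_X[d_H]/d_j^{\,r}\to0$ for every $r<1$, where $d_1,d_2$ are the two non-symmetric losses of the statement. This is exactly Lemma~\ref{prop:lower-distinguish}.

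Your first construction already gives it, and it coincides with the paper's route. Both points share $(\beta,\eta)$, with $\|\Delta\eta\|$ fixed and of constant order, and the coordinates of $\beta,\eta$ bounded away from zero so that both points lie in $\Xi(l_n)$ for large $n$. Only $\exp(\tau)$ moves, by $\varepsilon$. Then $\mathbb{E}_X[d_H]\lesssim\varepsilon\asymp d_1$. Apply the two-point bound in the form of Lemma~C.1 of \cite{gadat2020parameter}, which is needed because the losses are weighted by the true $\Delta\eta$, with $\varepsilon^{r}\asymp n^{-1/2}$; this yields $n^{-1/r}$. The second bound is identical, with $\tau$ and $\|\Delta\eta\|$ fixed and $(\beta,\eta)$ moved by $\varepsilon$. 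Letting $\Delta\eta\to0$ is unnecessary.

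Moreover, the step you call the main obstacle cannot succeed. The gate $e^{\beta^\top x+\tau}/(1+e^{\beta^\top x+\tau})$ is not linear in $e^{\tau}$. Clearing denominators as in the proof of Theorem~\ref{thm:d2_mle_rate}, with $a=e^{\beta^\top x+\tau}$ and $a'=e^{\beta^\top x+\tau'}$, gives
\[
(p_G-p_{G'})(1+a)(1+a')=a\bigl(f(\eta)-f_0\bigr)-a'\bigl(f(\eta')-f_0\bigr)+aa'\bigl(f(\eta)-f(\eta')\bigr).
\]
Imposing $e^{\tau}\Delta\eta=e^{\tau'}\Delta\eta'$ removes the first-order parts of the first two terms only. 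The third term leaves $e^{2\beta^\top x}e^{\tau+\tau'}(1-e^{\tau-\tau'})\,\partial f\cdot\Delta\eta$, which is of order $|\tau-\tau'|\,\|\Delta\eta\|$. That is the same order as the uncancelled term $(a-a')\,\partial f\cdot\Delta\eta$, and it dominates the quadratic terms, which are $O(|\tau-\tau'|\,\|\Delta\eta\|^2)$. Strong identifiability (independence of $\partial h/\partial\eta$ and $e^{\beta^\top x}\partial h/\partial\eta$, with $\beta\neq0$ forced by $\Xi(l_n)$) rules out any other choice of $\Delta\eta'$ that would cancel it. So the discrepancy is never purely quadratic, and you gain nothing over the plain construction.

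Likewise, for the second bound there is no direction in which $\partial f/\partial\eta$ near $\eta_0$ is $O(\|\Delta\eta\|)$; it is $f(\eta)-f_0$ that is. A smaller slip: tensorization gives $d_V(p_{G_*,n},p_{G_*',n})\le\sqrt{2n\,\mathbb{E}_X[d_H^2]}$, not $\sqrt{2n}\,\mathbb{E}_X[d_H]$.
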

\noindent
The proof of Theorem \ref{thm:d2_minimax} can be found in Appendix~\ref{apppf:d2_minimax}.
% See Appendix~\ref{proof:d1_minimax} for the proof of
% Theorem~\ref{thm:d1_minimax}.
Theorem~\ref{thm:d2_minimax} establishes matching minimax lower bounds for the
homogeneous-expert regime, confirming that the near-parametric rates achieved
by the MLE are optimal up to logarithmic factors.

\subsection{Heterogeneous-expert regime}
\label{sec:neq}
% \begin{align*}
% &D_1(G, G_*) 
% :=  \left| \exp(\tau) - \exp(\tau^*) \right| 
% \\&
% + 
% [\exp(\tau)+\exp(\tau^*)] 
% \left[ \|\beta - \beta^*\| 
% + \sum_{i=1}^{K-1} \big( \|\eta_{i} - \eta_{i}^*\| \big) \right].
% \end{align*}

% To start with, we consider a scenario in which the pre-trained expert $h_0$ is distinct with the prompt expert $h$. Recall that given the density estimation rate in Proposition~\ref{prop:density-rate}, we need to construct a loss function between the MLE $\widehat{G}_n$ and the ground-truth parameters $G_*$, which should be bounded by the Hellinger distance between the two corresponding densities, in order to capture the parameter estimation rates. 
% We begin by considering the setting in which the pre-trained expert $h_0$ is distinct from the prompt expert $h$. 

% Given the complex dependency of the homogeneity induced by the merging of the pre-trained model and adaptor. 
% We will naturally consider that could we faster the convergence rate. 
% And we proposed the heterogeneous-experts regime, in which the
% pretrained expert $h_0$ differs structurally from the adapter expert $h$.
The strong coupling between the pretrained expert and the adapter expert in
the homogeneous-expert regime induces substantial non-identifiability and
slows down parameter estimation. This observation naturally motivates the
question of whether faster convergence rates can be obtained by relaxing the
homogeneity assumption. To this end, we introduce the
\emph{heterogeneous-expert} regime, in which the pretrained expert $h_0$ is
structurally distinct from the adapter expert $h$.

\vspace{0.5em}
\noindent
Analogous to the previous analysis, we introduce a loss function
$D_2(G,G_*)$ that measures the discrepancy between $\widehat{G}_n$ and $G_*$ when heterogeneity holds:
\begin{align}
\label{applossdef:D1-loss}
&D_2(G, G_*) 
:=  \left| \exp(\tau) - \exp(\tau^*) \right| 
% \nonumber
% \\&
+ 
\left[\exp(\tau)+\exp(\tau^*)\right] 
% \\&
% \hspace{7.1em}
\times
\Big[ \|\beta - \beta^*\| 
+ \sum_{i=1}^{K-1}  \|\eta_{i} - \eta_{i}^*\|  \Big].
\end{align}

% \begin{align}
% \label{applossdef:D1-loss}
% &D_1(G, G_*) 
% :=  
% \left[\exp(\tau)+\exp(\tau^*)\right] 
% \times
% \left[ \|\beta - \beta^*\| 
% + \sum_{i=1}^{K-1} \big( \|\eta_{i} - \eta_{i}^*\| \big) \right]
% +
% \nonumber
% \\&
% \left| \exp(\tau) - \exp(\tau^*) \right| 
% % \left[\exp(\tau)+\exp(\tau^*)\right] 
% \end{align}

% \begin{align}
% \label{applossdef:D1-loss}
%     D_1(G, G_*)
%     =
%     |
%     \exp(\tau)-\exp(\tau^*)
%     |
%     +
%     \big(
%     \exp(\tau)+\exp(\tau^*)
%     \big)
%     \|(\beta,\eta,\nu)-(\beta^*,\eta^*,\nu^*)\|.
% \end{align}
% We are ready to determine the convergence behavior of the MLE under this settings. 

\vspace{0.5em}
\noindent
We now proceed to analyze the convergence behavior of the MLE under the
heterogeneous-expert regime.
\begin{theorem}
\label{thm:not_equal}
% Suppose that the pre-trained expert $h_0$ has a different structure from the prompt expert $h$, that is, $h_0 \neq h$.
%Suppose that the pre-trained expert and the adapter expert belong to different function families. 
%That is, there exist function classes $\mathcal{H}_0$ and $\mathcal{H}$ such that 
%$h_0(\cdot;\eta_0)\in\mathcal{H}_0$ and $h(\cdot;\eta)\in\mathcal{H}$, with $\mathcal{H}_0 \neq \mathcal{H}$.
Under the heterogeneous-experts regime, there exists a positive constant $C_2$ that depends on $\Xi$ such that the Hellinger lower bound $\mathbb{E}_X\left[ d_H(p_{ G}\left(\cdot|X),p_{G_*}(\cdot|X)\right) \right]\geq C_2D_2( G, G_*)$ holds for all parameters $G\in\Xi$. 
As a result, we obtain
% \begin{align}
%     \label{eq:dis_bound_1}
%     \sup_{G_*\in\Xi}\mathbb{E}_{p_{\Gs,n}} 
%     \Big[
%     |
%     \exp(\widehat{\tau}_n)
%     -
%     \exp(\tau^*)
%     |^2 
%     \Big] 
%     &\lesssim \log(n)/n,
%     \\
%     \label{eq:dis_bound_2}
%     \sup_{G_*\in\Xi}\mathbb{E}_{p_{\Gs,n}} 
%     \Big[
%     \exp^2(\tau^*)
% \Big( \|\widehat{\beta}_n - \beta^*\|^2 
% + 
%     \nonumber
%     \\
%     \sum_{i=1}^{K-1} \|\widehat{\eta}_{n,i} - \eta_{i}^*\|^2  \Big)
%     \Big] 
%     &\lesssim 
%     \log(n)/n.
% \end{align}
% \begin{align}
%     \label{eq:dis_bound_1}
%     \sup_{G_*\in\Xi}\mathbb{E}_{p_{\Gs,n}} 
%     \Big[
%     |
%     \exp(\widehat{\tau}_n)
%     -
%     \exp(\tau^*)
%     |^2 
%     \Big] 
%     &\lesssim \log(n)/n,
%     \\
%     \label{eq:dis_bound_2}
%     \sup_{G_*\in\Xi}\mathbb{E}_{p_{\Gs,n}} 
%     \Big[
%     \exp^2(\tau^*)
% \Big( \|\widehat{\beta}_n - \beta^*\| 
% + 
%     \nonumber
%     \\
%     \sum_{i=1}^{K-1} \|\widehat{\eta}_{n,i} - \eta_{i}^*\|  \Big)^2
%     \Big] 
%     &\lesssim 
%     \log(n)/n.
% \end{align}
\begin{align}
    \label{eq:dis_bound_1}
    &\hspace{-0.5em}
    \sup_{G_*\in\Xi}
    \hspace{-0.3em}
    \mathbb{E}_{p_{\Gs,n}} 
    \Big[
    |
    \exp(\widehat{\tau}_n)
    -
    \exp(\tau^*)
    | 
    \Big] 
    \lesssim {(\log(n)/n)}^{1/2},
    \\
    \label{eq:dis_bound_2}
    &
    \hspace{-0.5em}
    \sup_{G_*\in\Xi}
    \hspace{-0.3em}
    \mathbb{E}_{p_{\Gs,n}} 
    \Big[
\Big( \|\widehat{\beta}_n - \beta^*\| 
+ 
    \sum_{i=1}^{K-1} \|\widehat{\eta}_{n,i} - \eta_{i}^*\|  \Big)
    %     \nonumber
    % \\
    %     &
    % \hspace{3.15cm}
    \times
    \exp(\tau^*)
    \Big] 
    \lesssim 
    (\log(n)/n)^{1/2}.
\end{align}
% \begin{align}
% \label{applossineq:D1-loss}
%     V(p_{ G},p_{G_*})\geq C_1D_1( G, G_*).
% \end{align}
\end{theorem}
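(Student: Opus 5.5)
The plan has two parts. First we prove the Hellinger lower bound $\mathbb{E}_X[d_H(p_G(\cdot|X),p_{G_*}(\cdot|X))]\ge C_2 D_2(G,G_*)$. Then we combine it with Proposition~\ref{prop:density-rate}: taking $G=\widehat G_n$ and the supremum over $G_*\in\Xi$ gives $\sup_{G_*}\mathbb{E}[D_2(\widehat G_n,G_*)]\lesssim(\log(n)/n)^{1/2}$. Because $D_2$ dominates $|\exp(\widehat\tau_n)-\exp(\tau^*)|$ and $\exp(\tau^*)\bigl(\|\widehat\beta_n-\beta^*\|+\sum_i\|\widehat\eta_{n,i}-\eta_i^*\|\bigr)$, both \eqref{eq:dis_bound_1} and \eqref{eq:dis_bound_2} follow. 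Since $d_H\ge d_V$ up to a constant, it suffices to prove $\mathbb{E}_X[d_V(p_G,p_{G_*})]\gtrsim D_2(G,G_*)$. We split this into a local part ($D_2\to0$) and a global part.

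\emph{Local part.} We argue by contradiction. Suppose there is a sequence $G_n=(\beta_n,\tau_n,\eta_n)$ and true parameters $G_{*,n}$ (allowed to vary with $n$) such that $D_2(G_n,G_{*,n})\to0$ and $\mathbb{E}_X[d_V(p_{G_n},p_{G_{*,n}})]/D_2(G_n,G_{*,n})\to0$. By compactness we pass to a subsequence with $G_{*,n}\to\overline G=(\bar\beta,\bar\tau,\bar\eta)$. We then multiply through by the common denominator $(1+e^{\beta_n^\top x+\tau_n})(1+e^{(\beta^*_n)^\top x+\tau^*_n})$. The numerator of $p_{G_n}-p_{G_{*,n}}$ becomes
\[
(e^{\tau_n}-e^{\tau_n^*})e^{\bar\beta^\top x}\bigl(f-f_0\bigr)+\text{[first-order terms in }\beta_n-\beta_n^*,\ \eta_n-\eta_n^*\text{]}+\text{remainders},
\]
which we obtain by a first-order Taylor expansion of $e^{\beta^\top x}f(y|x,\eta)$ around $(\beta_n^*,\eta_n^*)$. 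The first-order coefficients are $e^{\tau_n^*}(\beta_n-\beta_n^*)$ and $e^{\tau_n^*}(\eta_{n,i}-\eta_{n,i}^*)$. The derivative terms are $x\,e^{\beta^\top x}(f-f_0)$ together with $e^{\beta^\top x}\partial f/\partial\eta_i^{(u)} = e^{\beta^\top x}f\,(\mathbb{1}_{y=i}-f(i|x))\,\partial h/\partial\eta_i^{(u)}$. Using Lipschitz bounds on $\exp$ and boundedness of $\mathcal X$, the cross terms involving $e^{\tau_n}-e^{\tau_n^*}$ times parameter differences are of order $o(D_2)$. After dividing by $D_2(G_n,G_{*,n})$, the coefficients are bounded, and at least one has a nonzero limit. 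By Fatou's lemma the limiting combination vanishes for a.e. $(x,y)$.

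The main obstacle is the resulting linear-independence step. We must show that the functions
\[
e^{\bar\beta^\top x}\bigl(f(y|x,\bar\eta)-f_0(y|x,\eta_0)\bigr),\quad x^{(w)}e^{\bar\beta^\top x}\bigl(f-f_0\bigr),\quad e^{\bar\beta^\top x}f(y|x,\bar\eta)\bigl(\mathbb{1}_{y=i}-f(i|x,\bar\eta)\bigr)\partial h/\partial\eta_i^{(u)}(x,\bar\eta)
\]
are linearly independent over $y\in[K]$, a.e. in $x$. This is precisely where heterogeneity enters. I would use the heterogeneity assumption, understood as $h_0$ and $h$ being structurally distinct so that $f_0(\cdot|x,\eta_0)$ cannot be written as a combination of $f$ and its $\eta$-derivatives. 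A vanishing combination would force $(a+b^\top x)f_0 = (a+b^\top x)f+\sum c_{iu}\partial_{\eta_i^{(u)}}f$. The key observation is that $\eta$-derivatives of $f$ sum to zero over $y$, and $f$ is not a combination of them. Together with distinctness of $f_0$ from the tangent space of $f$, this gives $a=b=0$ and then $c=0$. The normalization $\sum_{i\le K-1}$ in $D_2$ (fixing $\eta_K$ as reference, which removes the softmax shift invariance) is needed here to make the derivative family independent. This yields the contradiction.

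\emph{Global part.} If $D_2(G,G_*)\ge\varepsilon'$, suppose along some sequence that $\mathbb{E}_X[d_V]\to0$. Compactness gives limits $G'\ne\overline G$ with $p_{G'}=p_{\overline G}$ a.e. This contradicts Proposition~\ref{prop:identifiability}, except possibly when the limit $D_2$ is zero. That case is ruled out because $D_2$ is continuous and bounded below by $\varepsilon'$. Combining the local and global parts gives the constant $C_2$ depending only on $\Xi$.
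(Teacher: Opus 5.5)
Your proposal is correct and follows essentially the same route as the paper: a Ho--Nguyen-style local contradiction argument with a first-order Taylor expansion of $e^{\beta^\top x}f(y\mid x,\eta)$ around $(\beta_n^*,\eta_n^*)$, normalization by $D_2(G_n,G_{*,n})$, Fatou's lemma, and a linear-independence step where heterogeneity enters, followed by Proposition~\ref{prop:density-rate}. As in the paper's Proposition~\ref{prop:independence_case_1}, the conclusion $a=b=0$ ultimately rests on taking heterogeneity to mean exactly this non-degeneracy; your sum-over-$y$ remark adds nothing there, since $f-f_0$ also sums to zero.

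The differences are minor. You clear both gating denominators rather than only $1+e^{\beta_n^\top x+\tau_n}$, which produces the cross term $e^{\beta_n^\top x+\tau_n}e^{(\beta_n^*)^\top x+\tau_n^*}\bigl(f(y\mid x,\eta_n)-f(y\mid x,\eta_n^*)\bigr)$. Consequently your $\eta$-derivative functions should carry an extra positive factor $1+e^{\bar\beta^\top x+\bar\tau}$, which is harmless for the independence argument. You also make explicit the global compactness/identifiability step that the paper delegates to the Ho--Nguyen reference.
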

\noindent

\vspace{0.5em}
\noindent
The proof of Theorem~\ref{thm:not_equal} is deferred to
Appendix~\ref{proof:not_equal}.

\vspace{0.5em}
\noindent
In contrast to the convergence behavior established for the
homogeneous-expert regime in Theorem~\ref{thm:d2_mle_rate}, the rates obtained
above in the heterogeneous-expert regime exhibit several notable differences.
In particular, equation~\eqref{eq:dis_bound_1} shows that the gating parameter
estimator $\exp(\widehat{\tau}_n)$ converges to its ground-truth counterpart
$\exp(\tau^*)$ at the near-parametric rate
$\widetilde{\mathcal{O}}(n^{-1/2})$, up to logarithmic factors. This rate is
strictly faster than that in the homogeneous-expert regime, where the
convergence of $\exp(\widehat{\tau}_n)$ is affected by the vanishing behavior
of the adapter parameters $\Delta \eta_i^*$.

\vspace{0.5em}
\noindent
Moreover, equation~\eqref{eq:dis_bound_2} implies that the estimation errors of
the gating parameter $\widehat{\beta}_n$ and the adapter expert parameter
$\widehat{\eta}_n$ are likewise controlled at the same near-parametric rate
$\widetilde{\mathcal{O}}(n^{-1/2})$, up to the scaling factor $\exp(\tau^*)$.
This dependence reflects the fact that the estimability of $(\beta,\eta)$ is
intrinsically governed by the relative contribution of the adapter expert to
the overall mixture. Again, this behavior stands in sharp contrast to the
homogeneous-expert regime characterized in
equation \eqref{eq:nondis_bound_2}, where the structural coincidence of $h$ and $h_0$
causes the adapter parameters $\Delta \eta_i^*$ to enter the convergence
rates, resulting in slower estimation.

% Finally, we recall that the parameter space $\Xi$ is compact, so that $\exp(\tau^*)$ is bounded away from both zero and infinity.

% Meanwhile, in the contaminated MoE with input-free gating in \cite{yan2025contaminated}, the estimation rates for prompt parameters $\eta^*,\nu^*$ are slower than $\widetilde{\mathcal{O}}(n^{-1/2})$ as they depend on the convergence rate of the gating parameter to zero. 
% Therefore, replacing the input-free gating with the softmax gating in the contaminated MoE helps reduce the sample complexity of parameter estimation. 

\noindent
% Given the near-parametric convergence rates established in Theorem~\ref{thm:not_equal}, it is natural to ask whether these rates are optimal. To answer this question in the affirmative, we derive minimax lower bounds below.
% The near-parametric convergence guarantees obtained in Theorem~\ref{thm:not_equal} raise the question of whether such rates are statistically optimal. We address this question by establishing matching minimax lower bounds as follows.
% The near-parametric convergence rates established in
% Theorem~\ref{thm:not_equal} naturally raise the question of their statistical
% optimality. We answer this question by deriving matching minimax lower bounds,
% as presented below.
We next establish minimax lower bounds to characterize the optimality of the
rates in Theorem~\ref{thm:not_equal}.
\begin{theorem}
\label{thm:d1_minimax}
% Under the assumption in Theorem \ref{thm:not_equal},
% the following minimax lower bounds 
Suppose that the assumptions of Theorem~\ref{thm:not_equal} hold. Then the following minimax lower bounds 
% \begin{align*}
% &    \inf_{\overline{G}_n\in \Xi }\sup_{G\in \Xi }
%     \mathbb{E}_{p_{G,n}} \Big( 
%     |\exp(\overline{\tau}_n)
%     -\exp(\tau)|^2 \Big) 
%     \gtrsim n^{-1/r},
%     \\
% &\inf_{\overline{G}_n\in \Xi }\sup_{G\in \Xi }
%     \mathbb{E}_{p_{G,n}} 
%     \Big( \exp^2(\tau) 
%     \left[ \|\overline{\beta}_n - \beta\|^2 
% + \sum_{i=1}^{K-1} \big( \overline{\eta}_{n,i} - \eta_{i}\|^2 \big) \right] 
%     \Big) 
%     \\
%     \gtrsim n^{-1/r},
% \end{align*}
\begin{align*}
&    \inf_{\overline{G}_n\in \Xi }\sup_{G\in \Xi }
    \mathbb{E}_{p_{G,n}} \Big( 
    |\exp(\overline{\tau}_n)
    -\exp(\tau)|^2 \Big) 
    \gtrsim n^{-1/r},
    \\
&\inf_{\overline{G}_n\in \Xi }\sup_{G\in \Xi }
    \mathbb{E}_{p_{G,n}} 
    \Big(  
    \Big[ \|\overline{\beta}_n - \beta\|^2 
+ \sum_{i=1}^{K-1} \| \overline{\eta}_{n,i} - \eta_{i}\|^2  \Big] 
    % \\&\hspace{4cm}
    \times \exp^2(\tau)
        \Big) 
    \gtrsim n^{-1/r},
\end{align*}
hold for any $0<r < 1$.
% where the infimum is over all estimators taking values in $\Xi$.
% Here, the infimum is taken over all sequences of estimates $(\widehat{\beta}_n, \widehat{\tau}_n, \widehat{\eta}_n)$.
% Here, the infimum is taken over all sequences of estimators
% $\overline{G}_n = (\overline{\beta}_n, \overline{\tau}_n, \overline{\eta}_n)$
% taking values in the parameter space $\Xi$, and the expectation is with respect to the joint distribution $p_{G,n}$ of the $n$ i.i.d.\ observations generated under parameter $G$.
Here, the infimum is taken over all  estimators
$\overline{G}_n = (\overline{\beta}_n,\overline{\tau}_n,\overline{\eta}_n)$
that take values in the parameter space $\Xi$, and the expectation is with respect to the joint distribution $p_{G,n}$ of the $n$ i.i.d.\ observations generated under parameter $G$.
\end{theorem}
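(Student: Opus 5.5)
We would prove Theorem~\ref{thm:d1_minimax} with Le Cam's two-point method, in the standard form used for MoE minimax lower bounds. Fix $0<r<1$. For each of the two losses we build a pair $G_*^{(1)},G_*^{(2)}\in\Xi$ whose loss separation is $\delta_n$ but whose $n$-fold product distributions have total variation bounded away from one. Le Cam's lemma then gives
\[
\inf_{\overline{G}_n}\max_{j\in\{1,2\}}\mathbb{E}_{p_{G^{(j)}_*,n}}\big[\mathcal{L}(\overline{G}_n,G^{(j)}_*)\big]\gtrsim \delta_n^2\bigl(1-d_V(p_{G^{(1)}_*,n},p_{G^{(2)}_*,n})\bigr),
\]
where $\mathcal{L}$ is the squared loss in question, and the triangle inequality handles the loss, which is a (pseudo)metric in the estimated coordinates.

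The main step is an upper bound on the conditional Hellinger distance, for a fixed $G_*$ in the interior of $\Xi$ and a perturbation $G=G_*+\epsilon v$:
\[
\mathbb{E}_X\bigl[d_H^2(p_G(\cdot|X),p_{G_*}(\cdot|X))\bigr]\lesssim \epsilon^2 .
\]
We get this from a first-order Taylor expansion of $p_G(y=s|x)$ in $(\beta,\tau,\eta)$. Boundedness of $\mathcal{X}$ and compactness of $\Xi$ give uniformly bounded derivatives, and the softmax densities are bounded below uniformly, so $d_H^2\lesssim \sum_s (p_G-p_{G_*})^2/p_{G_*}\lesssim\epsilon^2$.

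\emph{First bound.} Take $G^{(1)}_*=(\beta^*,\tau^*,\eta^*)$ and $G^{(2)}_*=(\beta^*,\tau^*+\epsilon_n,\eta^*)$, so that $|\exp(\tau^{(1)})-\exp(\tau^{(2)})|\asymp\epsilon_n$. \emph{Second bound.} Take $G^{(2)}_*=(\beta^*+\epsilon_n e_1,\tau^*,\eta^*)$, or perturb a single $\eta_i$ with $i\le K-1$. Because $\exp(\tau^*)$ is fixed and bounded below on compact $\Xi$, the loss separation is $\asymp\epsilon_n$.

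In both cases we use the tensorization $d_H^2(p^{\otimes n},q^{\otimes n})\le n\,d_H^2(p,q)$ together with $d_V\le\sqrt2\, d_H$. This gives
\[
d_V(p_{G^{(1)}_*,n},p_{G^{(2)}_*,n})\lesssim \sqrt{n}\,\epsilon_n .
\]
Choosing $\epsilon_n=n^{-1/(2r)}$ with $r<1$ makes $\sqrt n\,\epsilon_n=n^{1/2-1/(2r)}\to0$. Le Cam then yields a lower bound $\gtrsim \epsilon_n^2=n^{-1/r}$, which is the stated bound. We could even take $\epsilon_n=c\,n^{-1/2}$ and get $n^{-1}$, but matching the paper's formulation with arbitrary $r<1$ is enough.

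The main obstacle is making the Hellinger bound rigorous uniformly in $x$. Here we must check that $p_{G_*}(y=s|x)$ is bounded away from $0$ on $\mathcal{X}$; this holds by compactness and continuity of $h,h_0$. We must also check that $h$ is differentiable in $\eta$ with bounded derivative. We would state this as the standing regularity assumption on $h$, namely that $h$ is continuously differentiable in $\eta$ on compact sets, since heterogeneity plays no role in the lower bound. We also need the perturbed points to remain in $\Xi$, which holds by choosing $G_*$ in the interior.
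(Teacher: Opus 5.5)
Your proposal is correct and follows essentially the paper's route: a Le Cam two-point argument (the paper invokes Lemma C.1 of Gadat et al.) combined with a first-order Taylor/Lipschitz bound on the conditional Hellinger distance, which uses uniformly lower-bounded densities to get $O(\epsilon)$, followed by calibrating $\epsilon$ against $n$ to obtain $n^{-1/r}$. Your explicit version is a cleaner rendering of the paper's limit-ratio lemma: you keep $\tau$ fixed for the second loss so it is a genuine squared metric, tensorize Hellinger directly, and note that $\epsilon_n \asymp n^{-1/2}$ already gives $n^{-1}$.
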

\noindent
The proof of Theorem \ref{thm:d1_minimax} can be found in Appendix \ref{proof:d1_minimax}.
% % See Appendix~\ref{proof:d1_minimax} for the proof of
% % Theorem~\ref{thm:d1_minimax}.
% Theorem~\ref{thm:d1_minimax} establishes matching minimax lower bounds for the% heterogeneous-experts regime, confirming that the near-parametric rates achieved
% by the MLE are optimal up to logarithmic factors.
% % These lower bounds reflect the intrinsic difficulty of disentangling the contribution of the adapter expert from that of the pretrained expert when the two exhibit different structural forms.
% The proof of Theorem~\ref{thm:d1_minimax} is provided in
% Appendix~\ref{proof:d1_minimax}. 
This theorem confirms that the rates in
Theorem~\ref{thm:not_equal} are minimax optimal up to logarithmic factors in
the heterogeneous-expert regime.

\section{Numerical Experiments}
\label{sec:experiments}

In this section, we conduct experiments with a softmax-gated contaminated mixture of multinomial logistic experts model consisting of one fixed expert and one adapter expert. To empirically investigate the impact of expert identifiability on estimation behavior, we compare two regimes: a \textit{homogeneous-expert} regime, where the adapter expert and the pretrained expert share the same function family, and a \textit{heterogeneous-expert} regime, where they come from different function families.

\vspace{0.5em}
\noindent
\textbf{Experimental Setup.} For each sample size $n$, we repeat the experiment over ten independent runs with different random seeds. In each run, a new dataset is generated and the model parameters are estimated via the EM algorithm \cite{jordan1994hierarchical}, employing BFGS optimizer \cite{broyden1967quasi, fletcher1970new, goldfarb1970family, shanno1970conditioning} for the M-step due to the absence
of a universal closed-form solution. 
We report the mean and standard deviation of the estimation error across runs. To summarize empirical convergence rates, we fit a linear regression on the log–log scale of the mean error versus the sample size.

\vspace{0.5em}
\noindent
\textbf{Synthetic Data Generation.} We generate synthetic data according to the softmax-gated contaminated mixture of multinomial logistic experts model
described in equation \eqref{eq:contaminated_pretrain_model_general}.
For a given sample size $n$, feature vectors are independently drawn as
$X_i \sim \mathcal{N}(0, I_d)$, where $d$ denotes the feature dimension.
Given $X_i$, a logistic gate with probability
$\pi(X_i;\beta^*,\tau^*) = \{1+\exp(-(\beta^{*})^\top X_i-\tau^*)\}^{-1}$
selects the adapter expert, while the pretrained expert is selected with
probability $1-\pi(X_i;\beta^*,\tau^*)$.
The class label $Y_i$ is subsequently generated from the corresponding
expert-specific multinomial logistic model.

\vspace{0.5em}
\noindent
Throughout the experiments, we fix the data-generating parameters as follows.
The number of classes is set to $K=3$ and the feature dimension to $d=8$.
Across both homogeneous and heterogeneous expert regimes, the gating parameters are
$\beta^* = {1}/{\sqrt{d}}\,\mathbf{1}_d$ and $\tau^* = 0$.
The adapter expert parameter is defined as
$\eta_n^* = (1 + 10 n^{-3/8}) \eta_0$, where
$\eta_0 \in \mathbb{R}^{d\times K}$ is the pretrained expert parameter given by
$\eta_0 = e_2 (0.3, -1.5, 0)^\top$, with $e_2 \in \mathbb{R}^d$ denoting the
second canonical basis vector.
The initialization is chosen to be relatively close to these ground truth values in both settings to mitigate potential optimization instabilities.

\vspace{0.5em}
\noindent
\textbf{Results.} Figure~\ref{fig:non_distinguishable_all} presents the results under the homogeneous setting,
in which the pre-trained and adapter experts share the same activation function.
Specifically, Figures~\ref{fig:non_dist_tanh} and~\ref{fig:non_dist_gelu} correspond to
configurations using $\tanh$ and $\GELU$, respectively.
In both cases, the two experts become functionally similar, leading to substantially
slower error decay compared to the heterogeneous setting.
Quantitatively, the estimation error of the mixing weight $\exp(\tau)$ decays with rates
$\mathcal{O}(n^{-0.21})$ and $\mathcal{O}(n^{-0.22})$ for the $\tanh$--$\tanh$ and
$\GELU$--$\GELU$ configurations, respectively, which are close to
$\mathcal{O}(n^{-1/4})$.
Meanwhile, the errors of the gating parameters $\beta$ exhibit rates around
$\mathcal{O}(n^{-0.32})$ and $\mathcal{O}(n^{-0.34})$, and the estimation rates of the
adapter parameters $\eta$ are $\mathcal{O}(n^{-0.43})$ and $\mathcal{O}(n^{-0.41})$,
both of which are approximately $\mathcal{O}(n^{-3/8})$.
This result empirically confirms our theoretical result in
Theorem~\ref{thm:d2_mle_rate} that the emergence of homogeneous structures leads to
diminished identifiability, with convergence rates slower than the parametric benchmark
$\mathcal{O}(n^{-1/2})$.

\vspace{0.5em}
\noindent
By contrast, Figure~\ref{fig:distinguishable_all} reports the results under the
heterogeneous setting, in which the pre-trained and adapter experts employ different
function families.
Case (i), shown in Figure~\ref{fig:dist_case_linear_tanh}, assigns a linear map
$h_0(x;\eta_{0s}) = x^\top \eta_{0s}$ to the pre-trained expert and a
$\tanh$-activated map $h(x;\eta_s) = \tanh(x^\top \eta_s)$ to the adapter expert,
while Case (ii), shown in Figure~\ref{fig:dist_case_tanh_linear}, reverses this assignment.
As shown in the log--log plots, the estimation error of $\exp(\tau)$ follows empirical convergence rates on the order of
$\mathcal{O}(n^{-0.50})$ in Case~(i) and $\mathcal{O}(n^{-0.52})$ in Case~(ii).
Furthermore, in both configurations, the estimation errors associated with the gating
parameters $\beta$ and the adapter parameters $\eta$ 
is $\mathcal{O}(n^{-0.50})$ and $\mathcal{O}(n^{-0.51})$,
close to the rate
$\mathcal{O}(n^{-1/2})$.
These results suggest that, as established in Theorem~\ref{thm:not_equal}, the
structural heterogeneity between the pretrained and adapter experts yields
faster convergence rates than in the homogeneous regime.

%     \begin{figure*}[h]
%         \centering
%         \subfloat[
%         \textbf{Case (i):} 
%         {$h_0(x;\eta_{0s})\mathord{=}\tanh(x^\top \eta_{0s}); h(x;\eta_s)\mathord{=}\tanh(x^\top \eta_s).$}.
%         \label{fig:non_dist_tanh}
%         ]
%         {
%             \includegraphics[width=\textwidth]{figs/non_distinguishable_tanh.pdf}
%         }
        
%         \subfloat[
%         \textbf{Case (ii):} $
% h_0(x;\eta_{0s})\mathord{=}\GELU(x^\top \eta_{0s}); 
% h(x;\eta_s)\mathord{=}\GELU(x^\top \eta_s)
% $.
%         \label{fig:non_dist_gelu}
%         ]
%         {
%         \includegraphics[width=\textwidth]{figs/non_distinguishable_gelu.pdf}
%         }
%   \caption{\textbf{Homogeneous-expert regime.} Log–log plots of parameter estimation errors versus the sample size $n$
%  in the homogeneous setting.
% Figures~\ref{fig:non_dist_tanh} and~\ref{fig:non_dist_gelu} illustrate two
% homogeneous-expert configurations with shared activations: $\tanh$ in
% Figure~\ref{fig:non_dist_tanh} and $\GELU$ in Figure~\ref{fig:non_dist_gelu}.
% In each figure, blue dots denote the mean estimation error across different runs at each sample size with vertical error bars indicating one standard deviation, while the orange dashed line shows the fitted power-law trend.}
%   \label{fig:non_distinguishable_all}
%     \end{figure*}

\begin{figure}[t]
  \centering

  \begin{subfigure}{\textwidth}
    \centering
    \includegraphics[width=\textwidth]{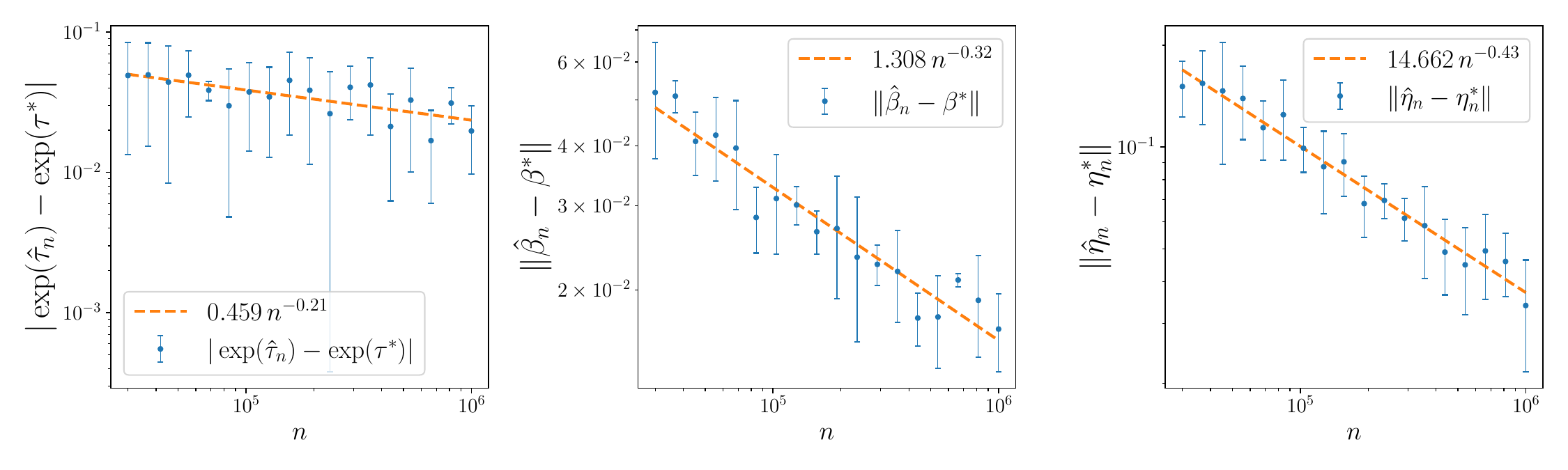}
    \caption{Case (i): \textnormal{$h_0(x;\eta_{0s})\mathord{=}\tanh(x^\top \eta_{0s}); 
h(x;\eta_s)\mathord{=}\tanh(x^\top \eta_s).$}}
    \label{fig:non_dist_tanh}
  \end{subfigure}

  \vspace{0.6em}

  \begin{subfigure}{\textwidth}
    \centering
    \includegraphics[width=\textwidth]{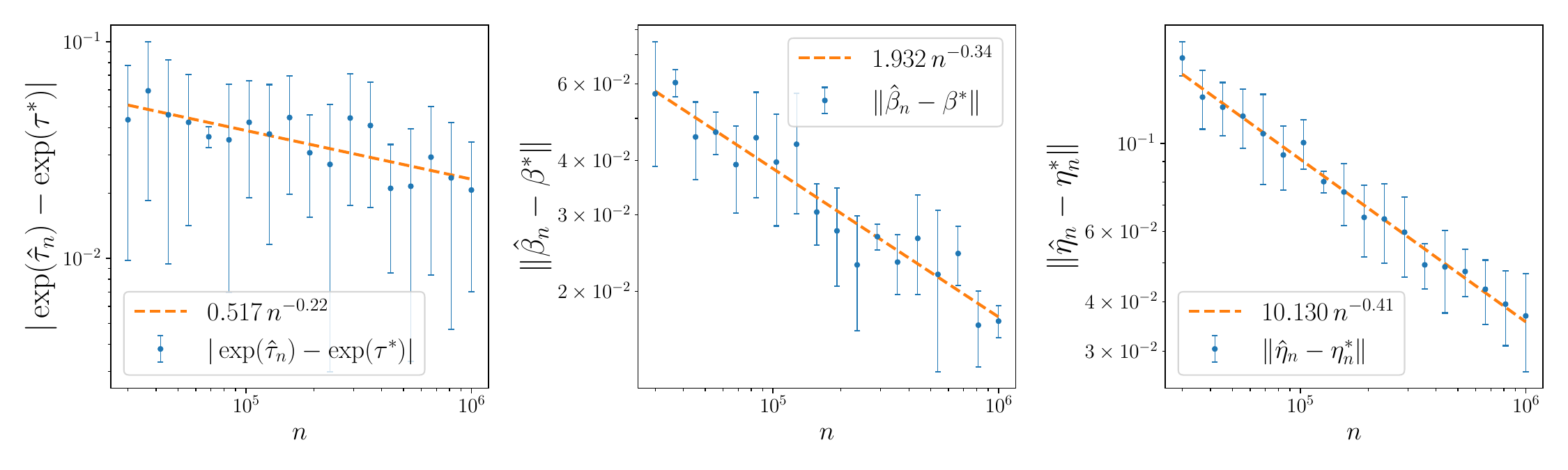}
    \caption{Case (ii): \textnormal{$
h_0(x;\eta_{0s})\mathord{=}\GELU(x^\top \eta_{0s}); 
h(x;\eta_s)\mathord{=}\GELU(x^\top \eta_s).
$}}
  \label{fig:non_dist_gelu}
  \end{subfigure}

  \caption{\textbf{Homogeneous-expert regime.} Log–log plots of parameter estimation errors versus the sample size $n$
 in the homogeneous setting.
Figures~\ref{fig:non_dist_tanh} and~\ref{fig:non_dist_gelu} illustrate two
homogeneous-expert configurations with shared activations: $\tanh$ in
Figure~\ref{fig:non_dist_tanh} and $\GELU$ in Figure~\ref{fig:non_dist_gelu}.
In each figure, blue dots denote the mean estimation error across different runs at each sample size with vertical error bars indicating one standard deviation, while the orange dashed line shows the fitted power-law trend.}
  \label{fig:non_distinguishable_all}
\end{figure}

\begin{figure}[t]
  \centering

  \begin{subfigure}{\textwidth}
    \centering
    \includegraphics[width=\textwidth]{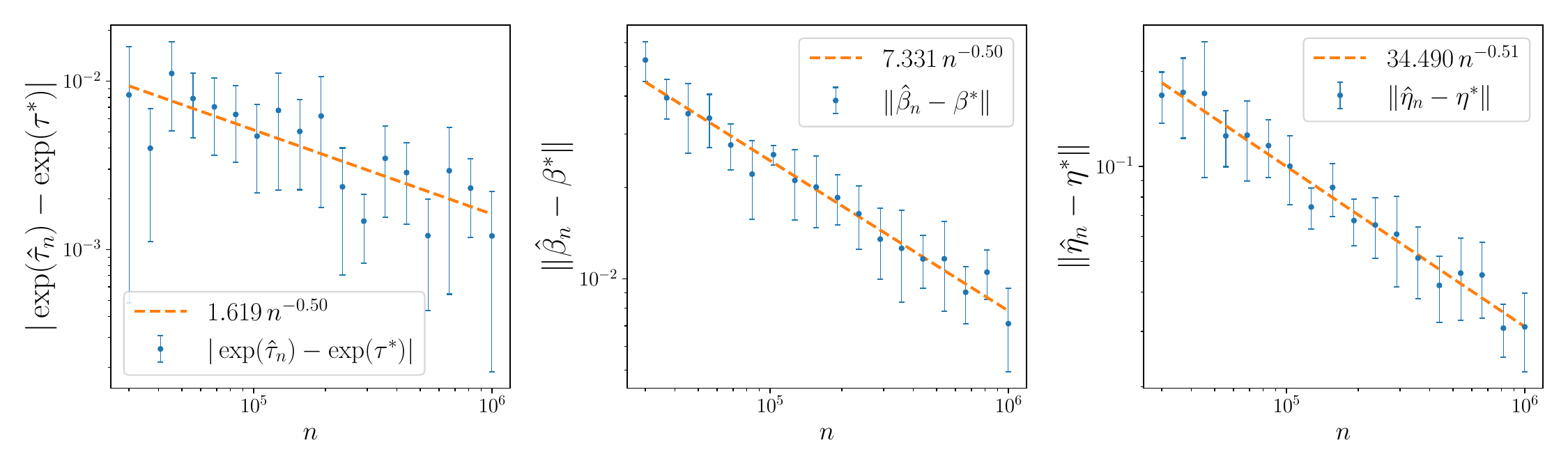}
     \caption{Case (i): \textnormal{$h_0(x;\eta_{0s})\mathord{=}x^\top \eta_{0s}; 
h(x;\eta_s)\mathord{=}\tanh(x^\top \eta_s).$}}
    \label{fig:dist_case_linear_tanh}
  \end{subfigure}

  \vspace{0.6em}

  \begin{subfigure}{\textwidth}
    \centering
    \includegraphics[width=\textwidth]{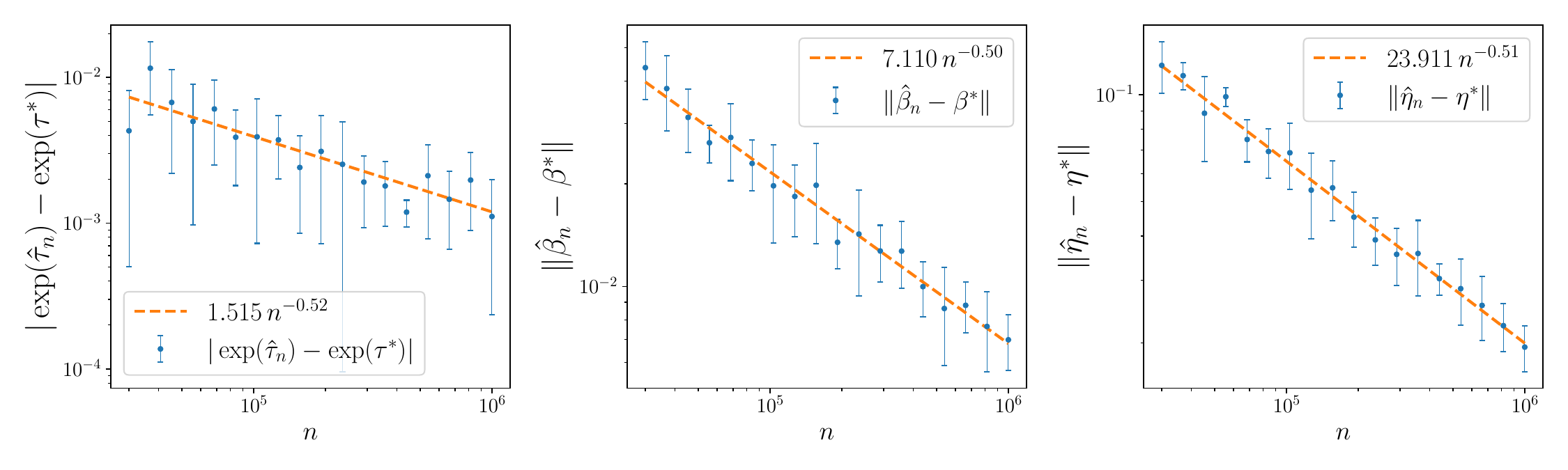}
     \caption{Case (ii): \textnormal{$h_0(x;\eta_{0s})\mathord{=}\tanh(x^\top \eta_{0s}); 
h(x;\eta_s)\mathord{=}x^\top \eta_s.$}}
    \label{fig:dist_case_tanh_linear}
  \end{subfigure}

  \caption{\textbf{Heterogeneous-expert regime.} Log–log plots of parameter estimation errors versus the sample size $n$
 in the heterogeneous setting. 
 Figures~\ref{fig:dist_case_linear_tanh} and~\ref{fig:dist_case_tanh_linear}
compare two configurations with swapped linear and $\tanh$ activations between
the pretrained and adapter experts.
 The remaining visualization conventions are the same as in Figure \ref{fig:non_distinguishable_all}.}
  \label{fig:distinguishable_all}
\end{figure}

\section{Conclusion}
\label{sec:conclusion}

In this article, we conduct a first comprehensive study about the 
softmax-gated contaminated mixture of multinomial logistic experts model
and analyse the convergence behavior of their maximum likelihood estimation. A key difficulty arises from the similarity between the adapter component and the pretrained model, which motivates us to distinguish between two cases: homogeneous-expert regime and heterogeneous-expert regime. Our findings show that the heterogeneous setting is statistically more favorable—our estimator attains the parametric minimax convergence rate in terms of sample size. In contrast, the homogeneous setting creates unnecessary overlap between the pretrained and adapter components, resulting in a significantly slower convergence rate for parameter estimation. 

\vspace{0.5em}
\noindent
Our analysis highlights the critical role of expert selection during the fine-tuning process. In particular, the heterogeneous setting yields a substantially faster convergence rate, suggesting that practitioners should avoid reusing the same expert as the one employed in the pre-trained model. For example, although linear experts are commonly used in large-scale pretrained architecture, in order to achieve a good performance, choosing a different expert function—such as $\tanh$ or $\GELU$—during fine-tuning can lead to better statistical performance.

\vspace{0.5em}
\noindent
The insights developed here motivate several research directions. First, an important extension is to study a more challenging and realistic setting in which a pretrained model is fine-tuned using multiple adapter components rather than a single adapter model \cite{nguyen2024deviated}. Second, it would be valuable to explore more general gating mechanisms beyond the standard softmax function. Investigating alternative gating functions that can better mitigate the interaction between experts may lead to improved stability and overall performance of the model.

\newpage

\appendix
\centering
\textbf{ \Large{Supplementary Material for\\
``Improving Minimax Estimation Rates for Contaminated Mixture 
of Multinomial Logistic Experts via Expert Heterogeneity''}}

\justifying
\setlength{\parindent}{0pt}
\textbf{}\\
\noindent
In this supplementary document, we compile the technical proofs that were left out of the main paper and provide further information on our experimental setup. Appendix~\ref{appsec:main_results} gathers the proofs of the principal theoretical findings, including the results on parameter convergence rates and the minimax lower bounds discussed in Section~\ref{sec:theory}. Additional lemmas and propositions that clarify the core properties of the softmax-contaminated MoE model—introduced earlier in Section~\ref{sec:preliminaries}—are proved in Appendix~\ref{appendix:ProofsforAuxiliaryResults}. Supplementary empirical results for the heterogeneous-expert scenario are reported in Appendix~\ref{app:exp}. The computational infrastructure used for all experiments is described in Appendix~\ref{app:infras}.

\section{Proof of Main Results}
\label{appsec:main_results}
\subsection{Homogeneous-expert regime}
In this section, we provide the proofs for Theorem \ref{thm:d2_mle_rate} and Theorem \ref{thm:d2_minimax} in homogeneous-expert regime. 
\subsubsection{Proof of Theorem \ref{thm:d2_mle_rate}}
\label{app_proof: d2_loss}

\noindent
We now turn to the proof of Theorem~\ref{thm:d2_mle_rate} in the setting where the pre-trained expert $h_0$ and the adapter expert $h$ share the same structure.
% \begin{align*}
%     \overline{D_2}\left( G, G_*) \right)
%  &:=
%     \|\beta-\beta^*\|
%     \exp(\tau)
%     \Vert (\Delta\eta,\Delta\nu) \Vert
%     \\&
%     +
%     \left|
%     \exp(\tau^*)-\exp(\tau)
%     \right|
%     \cdot
%     \Vert (\Delta\eta,\Delta\nu)\Vert
%     \cdot
%     \Vert (\Delta\eta^*,\Delta\nu^*)\Vert
%     \\&
%     +\Vert  (\Delta\eta,\Delta\nu)-(\Delta\eta^*,\Delta\nu^*) \Vert
%     \big( 
%     \exp(\tau)\Vert (\Delta\eta,\Delta\nu) \Vert
%     +\exp(\tau^*)\Vert (\Delta\eta^*,\Delta\nu^*) \Vert
%     \big)
%     \\&
%     +
%     \exp(\tau+\tau^*)\cdot
%     \Vert (\Delta\eta,\Delta\nu)-(\Delta\eta^*,\Delta\nu^*)\Vert^2
% \end{align*}

\begin{proof}

% First we denote $\overline{D_2}(G,\Gs)$ by
% \begin{align*}
%     \overline{D_2}\left( G, G_*) \right)
%  &:=
%     \|\beta-\beta^*\|
%     % \exp(\tau)
%     % \Vert (\Delta\eta,\Delta\nu) \Vert
%     \big( 
%     \exp(\tau)\Vert (\Delta\eta,\Delta\nu) \Vert
%     +\exp(\tau^*)\Vert (\Delta\eta^*,\Delta\nu^*) \Vert
%     \big)
%     \\&
%     +
%     \left|
%     \exp(\tau^*)-\exp(\tau)
%     \right|
%     \cdot
%     \Vert (\Delta\eta,\Delta\nu)\Vert
%     \cdot
%     \Vert (\Delta\eta^*,\Delta\nu^*)\Vert
%     \\&
%     +\Vert  (\Delta\eta,\Delta\nu)-(\Delta\eta^*,\Delta\nu^*) \Vert
%     \big( 
%     \exp(\tau)\Vert (\Delta\eta,\Delta\nu) \Vert
%     +\exp(\tau^*)\Vert (\Delta\eta^*,\Delta\nu^*) \Vert
%     \big)
%     \\&
%     +
%     \exp(\tau+\tau^*)\cdot
%     \Vert (\Delta\eta,\Delta\nu)-(\Delta\eta^*,\Delta\nu^*)\Vert^2
% \end{align*}

Let $\overline{G}=(\Bar{\beta},\Bar{\tau},\Bar{\eta})$ 
and $\Bar{\eta}$ can be identical to ${\eta_0}$. 
Then, we will show that
\begin{itemize}
    \item[(i)] When 
    % $({a_0},{b_0})\neq(\Bar{a},\Bar{b})$,
    ${\eta_0}\neq \Bar{\eta}$,
    \begin{align*}
     \lim_{\varepsilon\to 0}\inf_{G,\Gs}\left\{\frac{\|p_{G} - p_{G_*}\|_{\infty}}{D_2(G,G_*)}:D_2(G,\overline{G})\vee D_2(G_*,\overline{G})\leq\varepsilon\right\}>0.
    \end{align*}
    \item[(ii)] 
    % When $({a_0},{b_0})=(\Bar{a},\Bar{b})$,
    When ${\eta_0} = \Bar{\eta}$,
    \begin{equation}
    \label{eq:claim_nondistinguishable_independent}
     \lim_{\varepsilon\to 0}\inf_{G,\Gs}\left\{\frac{\|p_{G} - p_{G_*}\|_{\infty}}{D_1(G,G_*)}:D_1(G,\overline{G})\vee D_1(G_*,\overline{G})\leq\varepsilon\right\}>0.
    \end{equation}
\end{itemize}
 Part (i) can be proved by using the same arguments as in the proof of Theorem \ref{thm:not_equal} (see Section \ref{proof:not_equal} for details).
 Thus, we will consider only part (ii) in this section, specifically the most challenging setting that 
 ${\eta_0} = \Bar{\eta}$. %$({a_0},{b_0})=(\Bar{a},\Bar{b})$. 
 Under this assumption, we know that $h_i(X,\eta_0):=h_i(X,\eta_{0i})$ and $h_i(X,\eta):=h_i(X,\eta_i)$ are the same 
expert function, for all $i\in[K]$.
s.t. 
$f(Y=s|X;\eta_0)=f(Y=s|X;\eta)$ for almost surely $(X,Y)\in \mathcal{X}\times\mathcal{Y}$.
Assume that the above claim in equation~\eqref{eq:claim_nondistinguishable_independent} does not hold, 
then there exist two sequences $G_n=(\beta_n,\tau_n,\eta_n)$ and $G_{*,n}=(\beta^*_n,\tau^*_n,\eta_n^*)$, 
% and two sequences of gating parameters $\lambda_n=(\beta_n,\tau_n)$ and $\lambda_n^*=(\beta^*_n,\tau^*_n) \in \Lambda$, a compact set, 
such that
\begin{align*}
    \begin{cases}
        D_1(\Gn,\overline{G})
        \to 0,\\
        D_1(\Gsn,\overline{G})
        \to 0,
        \\ {\|p_{ \Gn} - p_{  \Gsn}\|_{\infty}}/{D_1(\Gn,\Gsn)}\to 0.
    \end{cases}
\end{align*}
% Now, we have three primary cases regarding the convergence behaviors between $(\lambda_n,G_n)=(\beta_n,\tau_n,\eta_{n},\nu_{n})$ and $(\lambda_n^*,G_{*,n})=(\beta_n^*, \tau_n^*, \eta^*_{n},\nu^*_{n})$.\\
% 1. $G_n,G_n^*\to G'= G_0$\\
% 2. $G_n,G_n^*\to G'\neq G_0$\\
% 3. $G_n\to G', G_n^*\to G_0$ or $G_n\to G_0, G_n^*\to G'$\\
We now analyze the limiting behavior of the sequences $G_n$ and $G_{*,n}$ as they approach $\overline{G}$. In particular, we distinguish between three asymptotic regimes based on how the expert parameters 
$\eta_n$ and 
$\eta_n^*$ converge.

First, it may occur that both $\eta_n$ and $\eta_n^*$ converge to the same limit $\eta_0$. Alternatively, both sequences may converge to a common limit $\eta' \neq \eta_0$, which is distinct from the true expert. Finally, it is also possible that one sequence converges to $\eta_0$ while the other converges to a different point $\eta' \neq \eta_0$.

In the following, we analyze each of these cases and demonstrate that in all scenarios, the assumption that the normalized difference vanishes leads to a contradiction.

\subsubsection*{Case 1:}
At first we consider that 
$\eta_n$ and $\eta_n^*$ share the same limit of $\eta_0$. Without loss of generality, we can suppose that  $\tau_n^*\geq \tau_n$. Subsequently, we consider
$W_n:=[p_{  G_{n}}(Y=s|X)-
    p_{  G_{*,n}}(Y=s|X)]
    \cdot[1+\exp((\beta_n^*)^{\top}X+\tau_n^*)]
    \cdot[1+\exp((\beta_n)^{\top}X+\tau_n)]$,
which can decomposed as
\begin{align*}
    W_n
    &=
    \exp(\tau_n) 
    \cdot
    [g(Y=s|X;\beta_n,\eta_n)-g(Y=s|X;\beta_n^*,\eta_n^*)]
    \\&
    -
    \exp(\tau_n)
    \cdot
    [g(Y=s|X;\beta_n,\eta_0)-g(Y=s|X;\beta_n^*,\eta_n^*)]
    \\&
    +\exp(\tau^*_n)
    \cdot
    [g(Y=s|X;\beta_n^*,\eta_0)-g(Y=s|X;\beta_n^*,\eta_n^*)]
    \\&
    +\exp\left( (\beta_n^* + \beta_n)^{\top}X + \tau_n^* + \tau_n \right)
    \cdot[f(Y=s|X,\eta_n)-f(Y=s|X,\eta_n^*)]
    \\&
    :=\Ione_{n}-\Itwo_{n}+\Ithree_{n}+\Ifour_{n}
\end{align*}
where we denote
$
    g(Y=s|X;\beta, \eta) =
    e(X;\beta)f(Y=s|X; \eta),
    % =
    % \exp\left(\beta^{\top}X\right) f\left(Y=s |  h(X,\eta), \nu\right).
$
for any $s\in [K]$, and $e(X;\beta) = \exp(\beta^\top X)$.
We perform a local expansion around the reference parameters $\beta_n^*, \eta_n^*$, 
where the parameter differences are defined by
$
    \Delta \eta_{ni} = \eta_{ni} - \eta_{0i}, 
    % \Delta  b_{ni} =  b_{ni} -  b_{0i},
$ and $
    \Delta \eta_{ni}^* = \eta_{ni}^* - \eta_{0i}. 
    % \Delta  b_{ni}^* =  b_{ni}^* -  b_{0i} . 
$
We denote 
% \begin{align*}
% f(Y = s  |  X; \eta) 
% &= \frac{\exp(a_s + b_s^\top X)}{\sum_{\ell=1}^K \exp(a_\ell + b_\ell^\top X)} 
% = \frac{\exp(h_s(X, a_s, b_s))}{\sum_{\ell=1}^K \exp(h_\ell(X, a_\ell, b_\ell))}.
% \end{align*}
$f \bigl(Y=s | X; \eta \bigr)
 = {\exp \bigl(h_{ s}\bigr)}/
         {\sum_{i=1}^{K}
            \exp \bigl(h_{ i}\bigr)} 
 = {\exp \bigl(h_{s}(X,\eta_{ s})\bigr)}/
         {\sum_{i=1}^{K}
            \exp \bigl(h_{i}(X,\eta_{ i})\bigr)}.$
% Applying a second-order Taylor expansion, then we obtain:
% \begin{align}
%     {\Ione}_{n}&=\exp(\tau_n)
%     \Big[
%     \sum_{|\alpha|=1}^2\frac{1}{\alpha!}
%     \prod_{u=1}^d
%     [(\beta_n- \beta^*_n)^{(u)}]^{\alpha_{1u}}
%     \prod_{v=1}^d
%     [(\Delta a_n- \Delta a^*_n)^{(v)}]^{\alpha_{2v}}
%     (\Delta b_n- \Delta b^*_n)^{\alpha_3}
%     \nonumber
%     \\&
%     \hspace{2.5cm}
%     \cdot\frac{\partial^{|\alpha|}g}{\partial \beta^{\alpha_1}\partial  a ^{\alpha_2}\partial b^{\alpha_3}}
%     (Y|X;\beta_n^*, a_n^*, b_n^*)+R_1(X,Y)
%     \Big]\nonumber\\
%     &=\exp(\tau_n)\Big[\sum_{|\alpha|=1}^2\frac{1}{\alpha!2^{\alpha_3}}
%     \prod_{u=1}^d
%     [(\beta_n- \beta^*_n)^{(u)}]^{\alpha_{1u}}
%     \prod_{v=1}^q
%     [(\Delta a_n- \Delta a^*_n)^{(v)}]^{\alpha_{2v}}
%     (\Delta b_n- \Delta b^*_n)^{\alpha_3}
%     \nonumber
%     \\&
%     \hspace{2.5cm}
%     \cdot
%     \exp((\betasn)^{\top}X)
%     \cdot
%     X^{\alpha_1}
%     \frac{\partial^{|\alpha_2|} h}{\partial  a^{|\alpha_2|}}(X,a_n^*)
%     \frac{\partial^{|\alpha_2|+2\alpha_3}f}{\partial h ^{|\alpha_2|+2\alpha_3}}
%     (Y|h\left( X, a_n^*\right), b_n^*)
%     +R_1(X,Y)
%     \Big],
% \end{align}

A second-order Taylor expansion yields
\begin{align}
    {\Ione}_{n}&=\exp(\tau_n)
    \Big[
    \sum_{|\alpha|=1}^2\frac{1}{\alpha!}
    (\beta_n- \beta^*_n)^{\alpha_{1}}
    \prod_{i=1}^{K}
    (\Delta \eta_{ni}- \Delta \eta^*_{ni})^{\alpha_{2i}}
\cdot\frac{\partial^{|\alpha|}g}{\partial \beta_n^{\alpha_1}
    \prod_{i=1}^{K}
    \partial  \eta_{ni}^{\alpha_{2i}}
    }
    (Y=s|X;\beta_n^*, \eta_n^*)+R_1(X,Y)
    \Big],
\end{align}
% where $ R_1(X,Y) $ is the remainder term containing higher-order terms.
% % , and the second equality is due to $\frac{\partial f}{\partial \nu}=\frac{1}{2} \frac{\partial^2 f}{\partial h^2}$. 
% Similarly, we will have that
where $R_1(X,Y)$ is a remainder term collecting higher-order contributions.
Similarly, we have
\begin{align*}
    \Itwo_{n} &= \exp(\tau_n) \Big[ \sum_{|\alpha|=1}^2 \frac{1}{\alpha!} 
    (\beta_n- \beta^*_n)^{\alpha_{1}}
    \prod_{i=1}^{K} (\Delta\eta^*_{ni})^{\alpha_{2i}}
    \cdot\frac{\partial^{|\alpha|}g}{\partial \beta_n^{\alpha_1}
    \prod_{i=1}^{K}
    \partial  \eta_{ni}^{\alpha_{2i}}}
    (Y=s|X;\beta_n^*,\eta_n^*) 
    + R_2(X,Y) \Big],
    \\
    \Ithree_{n} &= \exp(\tau_n^*) \Big[ \sum_{|\alpha|=1}^2 \frac{1}{\alpha!}  
    \prod_{i=1}^{K}(\Delta\eta^*_{ni})^{\alpha_{2i}}
    \cdot\frac{\partial^{|\alpha|}g}{
    \prod_{i=1}^{K}
    \partial  \eta_{ni}^{\alpha_{2i}}}
    (Y=s|X;\beta_n^*,\eta_n^*)     
    + R_3(X,Y) \Big],
    \\
    \Ifour_{n} &= \exp(\tau_n^* + \tau_n)
    \exp\left( (\beta_n^* + \beta_n)^{\top}X\right)
    \Big[ \sum_{|\alpha|=1}^2 \frac{1}{\alpha!} 
    \prod_{v=1}^{K}
    (\Delta\eta_{ni}- \Delta\eta^*_{ni})^{\alpha_{2i}}
    \cdot\frac{\partial^{|\alpha|}f}{
    \prod_{i=1}^{K}
    \partial  \eta_{ni}^{\alpha_{2i}}}
    (Y=s|X;\eta_n^*)  
    + R_4(X,Y) \Big].
\end{align*}
By collecting terms according to the derivative order $\gamma := |\alpha_2|$ and the monomial degree $\zeta := |\alpha_1|$, the expansion can be expressed in a more compact form: 
\begin{align*}
    \Ione_{n}&=
    \sum_{0 \leq |\zeta| \leq 2}
    \left[
    \sum_{0 \leq |\gamma| \leq 2}\Ione_{n,\gamma,\zeta}(X)
   \frac{\partial^{|\gamma|} f}{
      \partial h_1^{|\gamma_{1}|} \cdots 
      \partial h_{K}^{|\gamma_{K}|}
   }(Y = s  |  X; \eta_n^*)
    \exp((\betasn)^{\top}X)
    \right]X^{\zeta}
    +R_1(X,Y) 
\end{align*}
where each coefficient $\mathsf{I}_{n,\gamma,\zeta}(X)$ depends on the parameter differences as well as the derivatives of $h$ with respect to $\eta$.
More specifically, we have that
\begin{align*}
% &\Ione_{n,0,1}(X)=\exp(\tau_n)
% \sum_{1\leq w \leq d}
% (\betan-\betasn)^{(w)}
% \\
&\Ione_{n,0,\zeta}(X)=\exp(\tau_n)
(\betan-\betasn)^{(w)} \text{ where } |\zeta| = 1,\ \zeta_w = 1,  
\\
% &\Ione_{n,0,2}(X)=
% \exp(\tau_n)
% \sum_{1\leq w,r \leq d}
% \frac{(\betan-\betasn)^{(w)}(\betan-\betasn)^{(r)}}{1+\mathbf{1}_{w=r}} 
% \\
&\Ione_{n,0,\zeta}(X)=
\exp(\tau_n)
{(\betan-\betasn)^{(w)}(\betan-\betasn)^{(r)}} \text{ where } |\zeta| = 2,\ \zeta_w = \zeta_r = 1,\ w \neq r,
\\
% &\Ione_{n,1,1}(X)=\exp(\tau_n)
%     \Big[\sum_{1\leq w\leq d, 1\leq u\leq q}{[(\betan-\betasn)^{(w)}][(\Delta\etan-\Delta\etasn)^{(u)}]}\frac{\partial h}{\partial \eta^{(u)}}(X,\etasn)
%     \Big],
% \\
% &\Ione_{n,1,1}(X)=\exp(\tau_n)
%     \Big[\sum_{1\leq w\leq d, 1\leq u\leq q}{[(\betan-\betasn)^{(w)}][(\Delta\etan-\Delta\etasn)^{(u)}]}\frac{\partial h}{\partial \eta^{(u)}}(X,\etasn)
%     \Big],
% \\
&\Ione_{n,\gamma,\zeta}(X)=\exp(\tau_n)
    {[(\betan-\betasn)^{(w)}][(\Delta\etan-\Delta\etasn)^{(u)}]}\frac{\partial h}{\partial \eta^{(u)}}(X,\etasn) \text{ where } |\gamma| = |\zeta| = 1,\ \gamma_u = \zeta_w = 1,
\\
% &\Ione_{n,1,0}(X)=\exp(\tau_n)
%     \Big[\sum_{u=1}^{q}\{(\Delta\etan-\Delta\etasn)^{(u)}\}\frac{\partial  h}{\partial \eta^{(u)}}(X,\etasn)
%     \\&\hspace{2.8cm}
%     +\sum_{1\leq u,v\leq q}\frac{(\Delta \etan-\Delta \etasn)^{(u)}(\Delta \etan-\Delta\etasn)^{(v)}}{1+\mathbf{1}_{u=v}}\frac{\partial^2 h}{\partial \eta^{(u)}\partial \eta^{(v)}}(X,\etasn)
%     \Big],
&\Ione_{n,\gamma,0}(X)=\exp(\tau_n)
   \Big[(\Delta\etan-\Delta\etasn)^{(u)}\frac{\partial  h}{\partial \eta^{(u)}}(X,\etasn)
    +\sum_{v=1}^{q}\dfrac{1}{2}\{(\Delta \etan-\Delta \etasn)^{(u)}(\Delta \etan-\Delta \etasn)^{(v)}\}\frac{\partial^2 h}{\partial \eta^{(u)}\partial \eta^{(v)}}(X,\etasn)\Big] \\
    &\text{ where }|\gamma| = 1, \gamma_u = 1,
\\
    &\Ione_{n,\gamma,0}(X)=\exp(\tau_n)
    \Big[\frac{(\Delta \etan-\Delta \etasn)^{(u)}(\Delta \etan-\Delta\etasn)^{(v)}}{1+\mathbf{1}_{u=v}}
    \frac{\partial h}{\partial \eta^{(u)}}(X,\etasn)
    \frac{\partial h}{\partial \eta^{(v)}}(X,\etasn)
    \Big], \\ 
    &\text{ where }|\gamma| = 2, \gamma_u = 2 \text{ when } u=v, \text{ and } \gamma_u = \gamma_v = 1 \text{ when } u\neq v.  
\end{align*}

% Compact, i–free coefficients. Stack all class blocks.
% Let \boldsymbol{\eta}_n := (\eta_{n1},\dots,\eta_{n,K-1}) \in \mathbb{R}^{m},\ m:=(K-1)q,
% and \boldsymbol{\eta}_n^\ast := (\eta_{n1}^\ast,\dots,\eta_{n,K-1}^\ast).
% Define the stacked Jacobian vector J(X,\eta_n^\ast) \in \mathbb{R}^{m} by
%   J(X,\eta_n^\ast) := \big(J_1^\top,\dots,J_{K-1}^\top\big)^\top,
%   where J_i := \partial h/\partial\eta (X,\eta_{ni}^\ast) \in \mathbb{R}^q.
% Define the block–diagonal Hessian H(X,\eta_n^\ast) \in \mathbb{R}^{m\times m} by
%   H := \mathrm{blkdiag}\big(H_1,\dots,H_{K-1}\big),\ H_i := \partial^2 h/\partial\eta^2 (X,\eta_{ni}^\ast).
% Use component notation v^{(u)} for the u-th entry of any stacked vector v\in\mathbb{R}^m.

Similarly, we can rewrite $\Itwo_{n}$ in the same fashion as follows:
\begin{align*}
    \Itwo_{n}&=
    \sum_{0\leq |\zeta| \leq 2}
    \left[
    \sum_{0 \leq |\gamma|\leq 2}\Itwo_{n,\gamma,\zeta}(X)\frac{\partial^{\gamma} f}{\partial h^{\gamma}}(Y|h(X,\eta_n^*),\nu_n^*)
    \exp((\betasn)^{\top}X)
    \right]X^{\zeta}
    +R_2(X,Y) 
\end{align*}
where 
\begin{align*}
% &\Itwo_{n,0,1}(X)=\exp(\tau_n)
% \sum_{1\leq w \leq d}
% (\betan-\betasn)^{(w)}
% \\
&\Itwo_{n,0,\zeta}(X)=\exp(\tau_n)
(\betan-\betasn)^{(w)} \text{ where } |\zeta| = 1,\ \zeta_w = 1, \\
% &\Itwo_{n,0,2}(X)=
% \exp(\tau_n)
% \sum_{1\leq w,r \leq d}
% \frac{(\betan-\betasn)^{(w)}(\betan-\betasn)^{(r)}}{1+\mathbf{1}_{w=r}} 
% \\
&\Itwo_{n,0,\zeta}(X)=
\exp(\tau_n)
{(\betan-\betasn)^{(w)}(\betan-\betasn)^{(r)}} \text{ where } |\zeta| = 2,\ \zeta_w = \zeta_r = 1,\ w \neq r,
\\
% &\Itwo_{n,1,0}(X)=\exp(\tau_n)
%     \Big[\sum_{u=1}^{q}\{(-\Delta\etasn)^{(u)}\}\frac{\partial  h}{\partial \eta^{(u)}}(X,\etasn)
%     % \\&\hspace{2.8cm}
%     +\sum_{1\leq u,v\leq q}\frac{(-\Delta \etasn)^{(u)}( -\Delta\etasn)^{(v)}}{1+\mathbf{1}_{u=v}}\frac{\partial^2 h}{\partial \eta^{(u)}\partial \eta^{(v)}}(X,\etasn)
%     \Big],
% \\
&\Itwo_{n,\gamma,0}(X)=\exp(\tau_n)
   \Big[(-\Delta\etasn)^{(u)}\}\frac{\partial  h}{\partial \eta^{(u)}}(X,\etasn)
    +\dfrac{1}{2}\sum_{v=1}^q(-\Delta \etasn)^{(u)}( -\Delta\etasn)^{(v)}\frac{\partial^2 h}{\partial \eta^{(u)}\partial \eta^{(v)}}(X,\etasn)\Big] \\
    &\text{ where }|\gamma| = 1, \gamma_u = 1,
\\
&\Itwo_{n,\gamma,\zeta}(X)=\exp(\tau_n)
    {[(\betan-\betasn)^{(w)}][(-\Delta\etasn)^{(u)}]}\frac{\partial h}{\partial \eta^{(u)}}(X,\etasn) \text{ where } |\gamma| = |\zeta| = 1,\ \gamma_u = \zeta_w = 1,
\\
% &\Itwo_{n,1,1}(X)=\exp(\tau_n)
%     \Big[\sum_{1\leq w\leq d, 1\leq u\leq q}{[(\betan-\betasn)^{(w)}][(-\Delta\etasn)^{(u)}]}\frac{\partial h}{\partial \eta^{(u)}}(X,\etasn)
%     \Big],
% \\
&\Itwo_{n,\gamma,0}(X)=\exp(\tau_n)
    \Big[
   \frac{( -\Delta \etasn)^{(u)}( -\Delta\etasn)^{(v)}}{1+\mathbf{1}_{u=v}}
    \frac{\partial h}{\partial \eta^{(u)}}(X,\etasn)
    \frac{\partial h}{\partial \eta^{(v)}}(X,\etasn)
    \Big],\\
&\text{ where }|\gamma| = 2, \gamma_u = 2 \text{ when } u=v, \text{ and } \gamma_u = \gamma_v = 1 \text{ when } u\neq v. 
\end{align*}

In an analogous manner, we can express $\Ithree_{n}$ in the same form as follows. Since the difference for $\beta_n^*$ is zero, so all the coefficients with $\zeta\neq0$ vanish. However, for consistency of notation, we still write $\Ithree_{n}$ as follows
\begin{align*}
    \Ithree_{n}&=
    % \sum_{\zeta=0}^2
    % \left[
    \sum_{1 \leq |\gamma| \leq 2}\Ithree_{n,\gamma,0}(X)\frac{\partial^{\gamma} f}{\partial h^{\gamma}}(Y|h(X,\eta_n^*),\nu_n^*)
    \exp((\betasn)^{\top}X)
    % \right]X^{\zeta}
    +R_3(X,Y) 
\end{align*}
where 
\begin{align*}
% &\Ithree_{n,1,0}(X)=\exp(\tau^*_n)
%     \Big[\sum_{u=1}^{q}\{(-\Delta\etasn)^{(u)}\}\frac{\partial  h}{\partial \eta^{(u)}}(X,\etasn)
%     % \\&\hspace{2.8cm}
%     +\sum_{1\leq u,v\leq q}\frac{( -\Delta \etasn)^{(u)}( -\Delta\etasn)^{(v)}}{1+\mathbf{1}_{u=v}}\frac{\partial^2 h}{\partial \eta^{(u)}\partial \eta^{(v)}}(X,\etasn)
%     \Big],
% \\
&\Ithree_{n,\gamma,0}(X)=\exp(\tau^*_n)
    \Big[(-\Delta\etasn)^{(u)}\frac{\partial  h}{\partial \eta^{(u)}}(X,\etasn) 
    % \\&\hspace{2.8cm}
    +\sum_{1\leq u,v\leq q}\frac{( -\Delta \etasn)^{(u)}( -\Delta\etasn)^{(v)}}{1+\mathbf{1}_{u=v}}\frac{\partial^2 h}{\partial \eta^{(u)}\partial \eta^{(v)}}(X,\etasn)
    \Big], \text{ when } |\gamma| = 1,
\\
&\Ithree_{n,2,0}(X)=\exp(\tau^*_n)
    \Big[
    \sum_{1\leq u,v\leq q}\frac{( -\Delta \etasn)^{(u)}( -\Delta\etasn)^{(v)}}{1+\mathbf{1}_{u=v}}
    \frac{\partial h}{\partial \eta^{(u)}}(X,\etasn)
    \frac{\partial h}{\partial \eta^{(v)}}(X,\etasn)
    \Big]  \text{ when } |\gamma| = 2.
\end{align*}

% \begin{align*}
% \Ithree_{n,1,0}(X)
% &= \exp(\tau_n^*) 
% \sum_{i=1}^{K-1}
% \Big[
% ( -\Delta\eta_{ni}^*)^\top J_i^\ast  +  \dfrac12 ( -\Delta\eta_{ni}^*)^\top H_{i}^\ast ( -\Delta\eta_{ni}^*)
% \Big],\\
% \Ithree_{n,2,0}(X)
% &= \exp(\tau_n^*) \frac{1}{2}
% \sum_{i=1}^{K-1}\sum_{j=1}^{K-1}
% ( -\Delta\eta_{ni}^\ast)(-\Delta\eta_{nj}^\ast) J_i^*J_j^*
% .
% \end{align*}

Now we consider 
$\Ifour_n=\exp\left( (\beta_n^* + \beta_n)^{\top}X + \tau_n^* + \tau_n \right)
    \cdot[f(Y|\sigma(X,\eta_n),\nu_n)-f(Y|\sigma(X,\eta_n^*),\nu_n^*)]$,
which is equivalent to 
\begin{align*}
    \Ifour_{n}&=
    \sum_{1 \leq |\gamma| \leq 2}\Ifour_{n,\gamma,0}(X)\frac{\partial^{\gamma} f}{\partial h^{\gamma}}(Y|h(X,\eta_n^*),\nu_n^*)
    \exp((\betasn)^{\top}X)
    \exp((\betan)^{\top}X)
    +R_4(X,Y) 
\end{align*}
where
\begin{align*}
&\Ifour_{n,\gamma,0}(X)=\exp(\tau^*_n+\tau_n)
    \Big[\sum_{u=1}^{q}\{(\Delta\etan-\Delta\etasn)^{(u)}\}\frac{\partial  h}{\partial \eta^{(u)}}(X,\etasn)
    \\&\hspace{4.3cm}
    +\sum_{1\leq u,v\leq q}\frac{( \Delta\etan-\Delta \etasn)^{(u)}(\Delta\etan -\Delta\etasn)^{(v)}}{1+\mathbf{1}_{u=v}}\frac{\partial^2 h}{\partial \eta^{(u)}\partial \eta^{(v)}}(X,\etasn)
    \Big] \text{ where } |\gamma| = 1, 
\\
&\Ifour_{n,\gamma,0}(X)=\exp(\tau_n^*+\tau_n)
    \Big[
    \sum_{1\leq u,v\leq q}\frac{(\Delta\etan -\Delta \etasn)^{(u)}( \Delta\etan-\Delta\etasn)^{(v)}}{1+\mathbf{1}_{u=v}}
    \frac{\partial h}{\partial \eta^{(u)}}(X,\etasn)
    \frac{\partial h}{\partial \eta^{(v)}}(X,\etasn)
    \Big] \text{ where } |\gamma| = 2. 
\end{align*}

% \begin{align*}
% \Ifour_{n,1,0}(X)
% &= \exp(\tau_n^*+\tau_n) 
% \sum_{i=1}^{K-1}
% \Big[
% ( \Delta\eta_{ni}-\Delta\eta_{ni}^*)^\top J_i^\ast  +  \dfrac12 ( \Delta\eta_{ni}-\Delta\eta_{ni}^*)^\top H_{i}^\ast ( \Delta\eta_{ni}-\Delta\eta_{ni}^*)
% \Big],\\
% \Ifour_{n,2,0}(X)
% &= \exp(\tau_n^*+\tau_n) \frac{1}{2}
% \sum_{i=1}^{K-1}\sum_{j=1}^{K-1}
% ( \Delta\eta_{ni}-\Delta\eta_{ni}^\ast)(\Delta\eta_{nj}-\Delta\eta_{nj}^\ast) J_i^*J_j^*
% .
% \end{align*}

Then we could conclude that 
% \begin{align*}
%     \Ione_{n}&=
%     \sum_{\zeta=0}^2
%     \left[
%     \sum_{\gamma=0}^2\Ione_{n,\gamma,\zeta}(X)
%    \frac{\partial^{|\gamma|} f}{
%       \partial h_1^{|\gamma_{1}|} \cdots 
%       \partial h_{K-1}^{|\gamma_{K-1}|}
%    }(Y = s  |  X; \eta_n^*)
%     \exp((\betasn)^{\top}X)
%     \right]X^{\zeta}
%     +R_1(X,Y) 
% \end{align*}
\begin{align*}
    W_{n}&=
    \sum_{0 \leq |\gamma| \leq 2}
    \Bigg[
    \left(
    \Ione_{n,\gamma,0}(X)+
    \Itwo_{n,\gamma,0}(X)+
    \Ithree_{n,\gamma,0}(X)
    \right)
    +
    \sum_{1\leq |\zeta| \leq 2}
    \left(
    \Ione_{n,\gamma,\zeta}(X)+
    \Itwo_{n,\gamma,\zeta}(X)
    \right)
    X^{\zeta}
    +
    \Ifour_{n,\gamma,0}(X)\exp((\betan)^{\top}X)
    \Bigg]
    \\&
    \hspace{3cm}
    \cdot
   \frac{\partial^{|\gamma|} f}{
      \partial h_1^{|\gamma_{1}|} \cdots 
      \partial h_{K}^{|\gamma_{K}|}
   }(Y = s  |  X; \eta_n^*)
     \cdot
    \exp((\betasn)^{\top}X) .
\end{align*}
Therefore, we can view the quantity $W_n/\dtwo)$ as a linear combination of elements of 
the set $\mathcal{L}\cup\mathcal{K}$, and $\mathcal{L}=\cup_{\gamma=0}^2\cup_{\zeta=0}^2\mathcal{L}_{\gamma,\zeta}$ , $\mathcal{K}=\cup_{\gamma=1}^2\mathcal{K}_{\gamma}$, where
\begin{align*}
    \mathcal{L}_{0,1}&=\left\{
    Xf(Y = s  |  X; \eta_n^*)\exp((\betasn)^{\top}X) 
    \right\}
    \\
    \mathcal{L}_{0,2}&=\left\{
    XX^{\top}f(Y = s  |  X; \eta_n^*)\exp((\betasn)^{\top}X) 
    \right\}
    \\
    \mathcal{L}_{1,1}&=\left\{X
    \frac{\partial h_i}{\partial \eta_i^{(u)}}(X,\eta_{ni}^*)
    \frac{\partial f}{\partial  h_i}(Y = s  |  X; \eta_n^*)\exp((\betasn)^{\top}X) 
    :u\in[q],i\in[K]
    \right\}
    \\
    \mathcal{L}_{1,0}&=\left\{\frac{\partial  h_i}{\partial \eta_i^{(u)}}(\xetas)\frac{\partial f}{\partial  h_i}(Y = s  |  X; \eta_n^*) 
    \exp((\betasn)^{\top}X)
    :u\in[d],i\in[K]\right\}\\
    &\cup\left\{\frac{\partial^2  h_i}{\partial \eta_i^{(u)}\partial \eta_i^{(v)}}(\xetas)\frac{\partial f}{\partial  h_i}(Y = s  |  X; \eta_n^*)\exp((\betasn)^{\top}X) :u,v\in[d],i\in[K] \right\},\\
    \mathcal{L}_{2,0}
    % &=\left\{
    % \frac{\partial^2f}{\partial  h_i \partial  h_j}(Y = s  |  X; \eta_n^*)\exp((\betasn)^{\top}X)  \right\}\\
    &=\left\{
    \frac{\partial  h_i}{\partial \eta_i^{(u)} }(\xetas)
    \frac{\partial  h_j}{\partial \eta_j^{(v)} }(\xetas)
    \frac{\partial^2f}{\partial  h_i \partial  h_j}(Y = s  |  X; \eta_n^*)
    \exp((\betasn)^{\top}X)
    :u,v\in[q];i,j\in[K]  \right\},
\end{align*}
and
\begin{align*}
    \mathcal{K}_{1 }&=\left\{\frac{\partial  h_i}{\partial \eta_i^{(u)}}(\xetas)
    \exp((\betan)^{\top}X)
    \frac{\partial f}{\partial  h_i}(Y = s  |  X; \eta_n^*) 
    \exp((\betasn)^{\top}X)
    :u\in[d],i\in[K]\right\}\\
    &\cup\left\{\frac{\partial^2  h_i}{\partial \eta_i^{(u)}\partial \eta_i^{(v)}}(\xetas)
    \exp((\betan)^{\top}X)
    \frac{\partial f}{\partial  h_i}(Y = s  |  X; \eta_n^*)
    \exp((\betasn)^{\top}X) :u,v\in[d], i\in[K]\right\},\\
    \mathcal{K}_{2 }
    &=\left\{
    \frac{\partial  h_i}{\partial \eta_i^{(u)} }(\xetas)
    \frac{\partial  h_j}{\partial \eta_j^{(v)} }(\xetas)
    \exp((\betan)^{\top}X)
    \frac{\partial^2f}{\partial  h_i \partial  h_j}(Y = s  |  X; \eta_n^*)
    \exp((\betasn)^{\top}X)
    :u,v\in[q];i,j\in[K]\right\}.
\end{align*}

Assume, to the contrary, that all the coefficients of these terms vanish as $n\to\infty$. 
Looking at the coefficients of 
$ \frac{\partial h_i}{\partial \eta_i^{(u)}}(X,\etasn)
    X\frac{\partial f}{\partial  h_i}(Y = s  |  X; \eta_n^*)\exp((\betasn)^{\top}X) $,
we get for all $w\in[d], u\in[q], i\in[K]$
\begin{align}
\label{pfeq:d2term_beta_eta}
\exp(\tau_n)
{[(\betan-\betasn)^{(w)}][(\Delta\eta_{ni})^{(u)}]}
    /\done\to0,
\end{align}
% \begin{align*}
% \Ione_{n,1,1}(X)
% &= \exp(\tau_n) 
% (\beta_n-\beta_n^\ast)
% \sum_{i=1}^{K-1}
% (\Delta\eta_{ni}-\Delta\eta_{ni}^\ast) J_i^*,
% \\
% \Itwo_{n,1,1}(X)
% &= \exp(\tau_n) 
% (\beta_n-\beta_n^\ast)
% \sum_{i=1}^{K-1}
% ( -\Delta\eta_{ni}^\ast) J_i^*,
% \end{align*}
% Looking at the coefficients of 
% $X\frac{\partial^2 f}{\partial  h^2}(Y = s  |  X; \eta_n^*)\exp((\betasn)^{\top}X) $,
% we get for all $w\in[d]$
% \begin{align}
% \label{pfeq:d2term_beta_nu}
% \exp(\tau_n)
% {[(\betan-\betasn)^{(w)}](\Delta\nun)}
%     /\dtwo\to0,
% \end{align}
% % \begin{align*}
% % &\Ione_{n,2,1}(X)=\frac{\exp(\tau_n)}{2}
% %     \Big[
% %     \sum_{1\leq w\leq d, 1\leq u\leq q}
% %     {(\betan-\betasn)^{(w)}(\Delta\nun-\Delta\nusn)^{(u)}}
% %     \Big],
% %     \\
% % &\Itwo_{n,2,1}(X)=\frac{\exp(\tau_n)}{2}
% %     \Big[
% %     \sum_{1\leq w\leq d, 1\leq u\leq q}
% %     {(\betan-\betasn)^{(w)}(-\Delta\nusn)^{(u)}}
% %     \Big],
% % \end{align*}
Looking at the coefficients of 
$\dfrac{\partial^2  h_i}{\partial \eta_i^{(u)}\partial \eta_i^{(v)}}(\xetas)\dfrac{\partial f}{\partial  h_i}(Y = s  |  X; \eta_n^*)\exp((\betasn)^{\top}X) $ 
, 
we get for all $u,v\in[q], i\in[K]$,
\begin{align}
\label{pf:d2_coefficients_1}
[\exp(\tau_n)
{(\Delta \etani-\Delta \etasni)^{(u)}(\Delta \etani-\Delta\etasni)^{(v)}}
+[\exp(\tau^*_n)-\exp(\tau_n)]
( -\Delta \etasni)^{(u)}( -\Delta\etasni)^{(v)}]
\nonumber\\    /\done \to 0,
\end{align}
Looking at the coefficients of 
$\dfrac{\partial  h_i}{\partial \eta_i^{(u)}}(\xetas)\dfrac{\partial f}{\partial  h_i}(Y = s  |  X; \eta_n^*)\exp((\betasn)^{\top}X) $ 
, 
we get for all $u \in[q],i\in[K]$,
\begin{align}
\label{pf:d2_coefficients_2}
    [\exp(\tau_n)(\Delta\etani-\Delta\etasni)^{(u)}
+[\exp(\tau^*_n)-\exp(\tau_n)]
    (-\Delta\etasni)^{(u)}]
% \nonumber\\    
/\done \to 0,
\end{align}
% Looking at the coefficients of 
% $\dfrac{\partial^2 f}{\partial h^2}(Y = s  |  X; \eta_n^*)\exp((\betasn)^{\top}X) $ , we get 
% % for all $u,v\in[q]$,
% \begin{align}
% \label{pf:d2_coefficients_3}
% [\exp(\tau_n)
% (\Delta\nun-\Delta\nusn)
% +[\exp(\tau^*_n)-\exp(\tau_n)]
% (-\Delta\nusn)]
%     /\dtwo&\to 0,
% \end{align}
Looking at the coefficients of 
$\dfrac{\partial h_i}{\partial \eta_i^{(u)}}(X,\etasn)
    \dfrac{\partial h_j}{\partial \eta_j^{(v)}}(X,\etasn)
    \dfrac{\partial^2 f}{\partial h_i \partial h_j}(Y = s|X; \eta_n^*)\exp((\betasn)^{\top}X) $ , we get 
for all $u,v\in[q]$ and $i,j\in[K]$,
\begin{align}
\label{pf:d2_coefficients_4}
[\exp(\tau_n)
(\Delta \etani-\Delta \etasni)^{(u)}(\Delta \etanj-\Delta\etasnj)^{(v)}
+[\exp(\tau^*_n)-\exp(\tau_n)]
( -\Delta \etasni)^{(u)}( -\Delta\etasnj)^{(v)}] 
 \nonumber\\   /\done\to 0,
\end{align}

Looking at the coefficients of 
$
    \dfrac{\partial  h_i}{\partial \eta_i^{(u)}}(X,\etasn)
    \exp((\betan)^{\top}X)
    \dfrac{\partial f}{\partial h_i}(Y = s  |  X; \eta_n^*)\exp((\betasn)^{\top}X) $ , we get 
for all $u\in[q],i\in[K]$,
\begin{align}
\label{pf:d2_coefficients_7}
[\exp(\tau^*_n+\tau_n)
(\Delta\etani-\Delta\etasni)^{(u)}]
    /\done&\to 0,
\end{align}
Looking at the coefficients of 
$
    \dfrac{\partial^2 h_i}{\partial \eta_i^{(u)}\partial \eta_i^{(v)}}(X,\etasn)
    \exp((\betan)^{\top}X)
    \dfrac{\partial f}{\partial h_i}(Y = s  |  X; \eta_n^*)\exp((\betasn)^{\top}X) $ , we get 
for all $u,v\in[q]$ and $i\in[K]$,
\begin{align}
\label{pf:d2_coefficients_8}
[\exp(\tau^*_n+\tau_n)
( \Delta\etani-\Delta \etasni)^{(u)}(\Delta\etani -\Delta\etasni)^{(v)}]
    /\done&\to 0,
\end{align}
% Looking at the coefficients of 
% $
%     \exp((\betan)^{\top}X)
%     \dfrac{\partial^2 f}{\partial h^2}(Y = s  |  X; \eta_n^*)\exp((\betasn)^{\top}X) $ , we get 
% % for all $u,v\in[q]$,
% \begin{align}
% \label{pf:d2_coefficients_9}
% [\exp(\tau^*_n+\tau_n)
% (\Delta\nun-\Delta\nusn)]
%     /\dtwo&\to 0,
% \end{align}
Looking at the coefficients of 
$
\frac{\partial h_i}{\partial \eta_i^{(u)}}(X,\etasn)
    \frac{\partial h_j}{\partial \eta_j^{(v)}}(X,\etasn)
    \exp((\betan)^{\top}X)
    \dfrac{\partial^2 f}{\partial h_i \partial h_j}(Y = s  |  X; \eta_n^*)\exp((\betasn)^{\top}X) $ , we get 
for all $u,v\in[q]$ and $i,j\in[K]$,
\begin{align}
\label{pf:d2_coefficients_10}
[\exp(\tau^*_n+\tau_n)
(\Delta\etani -\Delta \etasni)^{(u)}( \Delta\etanj-\Delta\etasnj)^{(v)}]
    /\done&\to 0,
\end{align}
% Looking at the coefficients of 
% $
% \frac{\partial h}{\partial \eta^{(u)}}(X,\etasn)
%     \exp((\betan)^{\top}X)
%     \dfrac{\partial^3 f}{\partial h^3}(Y = s  |  X; \eta_n^*)\exp((\betasn)^{\top}X) $, we get 
% for all $u\in[q]$,
% \begin{align}
% \label{pf:d2_coefficients_11}
% \exp(\tau^*_n+\tau_n)
% (\Delta\etan-\Delta\etasn)^{(u)}
% (\Delta\nun-\Delta\nusn)
%     /\dtwo&\to 0,
% \end{align}
% Looking at the coefficients of 
% $
%     \exp((\betan)^{\top}X)
%     \dfrac{\partial^4 f}{\partial h^4}(Y = s  |  X; \eta_n^*)\exp((\betasn)^{\top}X) $, we get 
% for all $u\in[q]$,
% \begin{align}
% \label{pf:d2_coefficients_12}
% [\exp(\tau^*_n+\tau_n)
% (\Delta\nun-\Delta\nusn)^2]
%     /\dtwo&\to 0,
% \end{align}

Now, consider equation \eqref{pfeq:d2term_beta_eta} and
recall that all the gating parameters are in compact sets,
% and applying the Cauchy–Schwarz inequality followed by summation over coordinates, 
Using \(\|v\|_2 \le \|v\|_1\) and expanding the \(\ell_1\) norms,
\begin{align*}
\frac{e^{\tau_n}}{\done} \|\beta_n-\beta_n^\ast\|_2
\sum_{i=1}^{K}\|\Delta\eta_{ni}\|_2
& \le 
\frac{e^{\tau_n}}{\done} \|\beta_n-\beta_n^\ast\|_1
\sum_{i=1}^{K}\|\Delta\eta_{ni}\|_1
\\
&= 
\sum_{w=1}^{d}\sum_{i=1}^{K}\sum_{u=1}^{q}
\frac{e^{\tau_n}}{\done} 
\big|(\beta_n-\beta_n^\ast)^{(w)}\big| 
\big|(\Delta\eta_{ni})^{(u)}\big|,
\end{align*}
we got that
\begin{align}   
    \label{eq:nondistinguishable_independent_01}
    \exp(\tau_n)
\|\betan-\betasn\|
\sum_{i=1}^{K}\|\Delta\eta_{ni}\|
    /\done&\to 0.
\end{align}
% By this symmetry in the definition of $ W_n $, we similarly obtain
% \begin{align}   
%     \exp(\tau_n+\tau_n^*)
% \|\betan-\betasn\|
% \|(\Delta\etasn,\Delta\nusn )\|
%     /\dtwo&\to 0.
% \end{align}

{
While it is intuitive that a similar result should hold for $\|\Delta\etasni\|$, a slightly more delicate handle is required. Suppose that 
\begin{align*}
     \exp(\tau_n^*)
\|\betan-\betasn\|
\|\Delta\etasni\|
    /\done \not\to 0.
\end{align*}
By combining this assumption with equation \eqref{pfeq:d2term_beta_eta}, we deduce that there exists at least one coordinate $u$ such that $|(\Delta \etasni)^{(u)}/(\Delta \etani)^{(u)}| \to \infty$, which in turn implies that $(\Delta\etasni)/(\Delta \etasni - \Delta \etani)^{(u)} \to 1$. Therefore, multiplying equation \eqref{pf:d2_coefficients_7} with $(\Delta\etasni)/(\Delta \etasni - \Delta \etani)^{(u)} \to 1$ yields 
\begin{align*}
\exp(\tau^*_n)
(\Delta\etasni)^{(u)}
    /\done \to 0.
\end{align*}
Also, noting that $\|\betan - \betasn\|$ is bounded since the parameters lie to a compact set, we obtain 
\begin{equation*}
    \exp(\tau^*_n)\|\betan-\betasn\|
(\Delta\etasni)^{(u)}
    /\done \to 0,
\end{equation*}
which is a contradiction here. Thus, we have 
\begin{equation}
\
    \exp(\tau_n^*)
\|\betan-\betasn\|
% \sum_{i=1}^{K-1}
\|\Delta\etasni\|
    /\done\to 0.
\end{equation}
% Similarly, also by combining equation \eqref{pfeq:d2term_beta_nu} and \eqref{pf:d2_coefficients_9}, we have 
% \begin{equation}
%     \exp(\tau_n^*)
% \|\betan-\betasn\|
% \|\Delta\nusn\|
%     /\dtwo\to 0.
% \end{equation}
As a result, we have 
\begin{align}
\label{eq:d2proof_new}
    \exp(\tau_n^*)
\|\betan-\betasn\|
\sum_{i=1}^{K}
\|\Delta\etasni\|
    /\done \to 0.
\end{align}
}
% \begin{align}
% \label{pf:d2_coefficients_7}
% \frac{\exp(\tau^*_n+\tau_n)
% (\Delta\etan - \Delta\etasn)^{(u)}}{\dtwo}
% &\to 0, &&\forall u \in [q],
% \\
% \label{pf:d2_coefficients_8}
% \frac{\exp(\tau^*_n+\tau_n)
% (\Delta\etan - \Delta\etasn)^{(u)}(\Delta\etan - \Delta\etasn)^{(v)}}{\dtwo}
% &\to 0, &&\forall u, v \in [q],
% \\
% \label{pf:d2_coefficients_9}
% \frac{\exp(\tau^*_n+\tau_n)
% (\Delta\nun - \Delta\nusn)}{\dtwo}
% &\to 0,
% \\
% \label{pf:d2_coefficients_10}
% \frac{\exp(\tau^*_n+\tau_n)
% (\Delta\etan - \Delta\etasn)^{(u)}(\Delta\etan - \Delta\etasn)^{(v)}}{\dtwo}
% &\to 0, &&\forall u, v \in [q],
% \\
% \label{pf:d2_coefficients_11}
% \frac{\exp(\tau^*_n+\tau_n)
% (\Delta\etan - \Delta\etasn)^{(u)}(\Delta\nun - \Delta\nusn)}{\dtwo}
% &\to 0, &&\forall u \in [q],
% \\
% \label{pf:d2_coefficients_12}
% \frac{\exp(\tau^*_n+\tau_n)
% (\Delta\nun - \Delta\nusn)^2}{\dtwo}
% &\to 0.
% \end{align}
In a similar manner, by considering equations \eqref{pf:d2_coefficients_7} 
% through \eqref{pf:d2_coefficients_12}, 
% and applying analogous reasoning as above, 
we obtain that
\begin{align}
\label{eq:nondistinguishable_independent_02}
    \exp(\tau_n+\tau_n^*)\cdot
    \Vert \Delta\etani-\Delta\eta^*_{ni}\Vert^2
    /\done&\to 0.
\end{align}

Let $u=v$ in the first equation in equation \eqref{pf:d2_coefficients_1}, we achieve that for all $u\in[q]$,
\begin{align}   
    \label{eq:nondistinguishable_independent_4}
    [\exp(\tau_n)
[(\Delta \etani-\Delta \etasni)^{(u)}]^2
+
[\exp(\tau^*_n)
-
\exp(\tau_n)]
[( \Delta \etasni)^{(u)}]^2]
    /\done&\to 0,
\end{align}
which implies that
\begin{align}
    \label{eq:nondistinguishable_independent_5}
[    \exp(\tau_n)
\|(\Delta \etani-\Delta \etasni)\|^2
+
(\exp(\tau^*_n)-\exp(\tau_n))
\| \Delta \etasni\|^2]
    /\done&\to 0
    .
\end{align}

We also have each term inside equation \eqref{eq:nondistinguishable_independent_5} is non-negative, thus  
\begin{align}
\label{eq:nondistinguishable_independent_11}
    (\exp(\tau^*_n)-\exp(\tau_n))\|\Delta\etasni\|^2/\done &\to 0,
    \nonumber\\
    \exp(\tau_n)\|\Delta\etani-\Delta\etasni\|^2/\done &\to 0.
\end{align}

Applying the AM-GM inequality, we have for all $u,v\in[q]$,
\begin{align}
\label{eq:nondistinguishable_independent_12}
    \dfrac{(\exp(\tau^*_n)-\exp(\tau_n))(\Delta\etasni)^{(u)}(\Delta\etasni)^{(v)}}{\done}\to 0,~ \dfrac{\exp(\tau_n)(\Delta\etani-\Delta\etasni)^{(u)}(\Delta\etani-\Delta\etasni)^{(v)}}{\done} &\to 0,
    % ~u,v\in[d],  
    % \\ \label{eq:nondistinguishable_independent_13_hello}   \dfrac{(\lambdasn-\lambdan)(\Delta\asn)^{(u)}(\Delta\bsn)^{ }}{\dtwo}\to 0,~ \dfrac{\lambdan(\Delta\an-\Delta\asn)^{(u)}(\Delta\bn-\Delta\bsn)^{ }}{\dtwo} &\to 0.
\end{align}

Next, by considering the coefficients of $\dfrac{\partial h_i}{\partial \eta_i^{(u)}}(X,\etasn)\dfrac{\partial f}{\partial h_i}(Y=s|X;\eta_n^*) \exp((\betasn)^{\top}X)$,
 we have
\begin{align}
    \label{eq:nondistinguishable_independent_1}
    [\exp(\tau_n)(\Delta\etani)^{(u)}-\exp(\tau_n^*)(\Delta\etasni)^{(u)}]/\done&\to 0,\quad u\in[q].
    % \\
    % \label{eq:nondistinguishable_independent_2}
    % [\exp(\tau_n)(\Delta\nun)^{ }-\exp(\tau_n^*)(\Delta\nusn)^{ }]/\dtwo&\to 0.\quad 
\end{align}
Noting that for $u,v\in[q]$,
\begin{align*}
    &\exp(\tau^*_n)(\detasni)^{(u)}(\detani-\detasni)^{(v)} 
    \\&= (\exp(\tau_n)(\detani)^{(v)} - \exp(\tau^*_n)(\detasni)^{(v)})(\detasni)^{(u)}+(\exp(\tau^*_n)-\exp(\tau_n))(\detani)^{(v)}(\detasni)^{(u)},\\
    &\exp(\tau_n)(\detani)^{(u)}(\detani-\detasni)^{(v)} 
    \\&= \exp(\tau^*_n)(\detasni)^{(u)}(\detani-\detasni)^{(v)} - (\exp(\tau_n)(\detani)^{(u)} - \exp(\tau^*_n)(\detasn)^{(u)})(\detani-\detasni)^{(v)}.
\end{align*}  
Thus, from equation \eqref{eq:nondistinguishable_independent_12} and equation \eqref{eq:nondistinguishable_independent_1}, we achieve that for $u,v\in[q]$,
\begin{align*}
    \exp(\tau^*_n)(\detasni)^{(u)}(\detani-\detasni)^{(v)}/\done &\to 0,
    \\ 
    \exp(\tau_n)(\detani)^{(u)}(\detani-\detasni)^{(v)}/\done &\to 0. 
\end{align*}

% Noting that for all $u\in[d]$,
% \begin{align*}
%     \lambdasn(\dasn)^{(u)}(\dbn-\dbsn)^{ } &= (\lambdan\dbn - \lambdasn\dbsn)(\dasn)^{(u)}+(\lambdasn-\lambdan)(\dbn)(\dasn)^{(u)},\\
%     \lambdan(\dan)^{(u)}(\dbn-\dbsn)^{ } &= \lambdasn(\dasn)^{(u)}(\dbn-\dbsn)^{ } - \left(\lambdan(\dan)^{(u)} - \lambdasn(\dasn)^{(u)}\right)(\dbn-\dbsn)^{ }.
% \end{align*}
% Thus, from 
% % \eqref{eqn:nondistinguishable_independent_14}, 
% equation \eqref{eq:nondistinguishable_independent_1} and equation \eqref{eq:nondistinguishable_independent_1.2}, we have for all $u\in[d]$,
% \begin{align*}
%     \lambdasn(\dasn)^{(u)}(\dbn-\dbsn) /\dtwo &\to 0\\ \lambdan(\dan)^{(u)}(\dbn-\dbsn) /\dtwo &\to 0. 
% \end{align*}

By using the same arguments we will derive 
\begin{align}
    % \lambdan\|\Delta\an\|.\|\Delta\an-\Delta\asn\|/\dtwo\to0,\\
    % \lambdasn\|\Delta\asn\|.\|\Delta\an-\Delta\asn\|/\dtwo\to0,\\
    \label{eq:nondistinguishable_independent_11.3}
    \exp(\tau_n)\|\Delta\etani\|.\|\Delta\etani-\Delta\etasni\|/\done\to0,\\
    \label{eq:nondistinguishable_independent_12.3}
    \exp(\tau^*_n)\|\Delta\etasni\|.\|\Delta\etani-\Delta\etasni\|/\done\to0,
    % \\
    % \label{eq:nondistinguishable_independent_11.4}
    % \exp(\tau_n)\|\Delta\bn\|.\|\Delta\bn-\Delta\bsn\|/\dtwo\to0,\\
    % \label{eq:nondistinguishable_independent_12.4}
    % \exp(\tau^*_n)\|\Delta\bsn\|.\|\Delta\bn-\Delta\bsn\|/\dtwo\to0.
\end{align}
% By using the same arguments to derive equation~\eqref{eq:nondistinguishable_independent_4}, equation~\eqref{eq:nondistinguishable_independent_11} and equation~\eqref{eq:nondistinguishable_independent_12}, we can point out that
% \begin{align}
%     [(\exp(\tau^*_n)-\exp(\tau_n))\|\Delta\nusn\|^2+\exp(\tau_n)\|\Delta\nun-\Delta\nusn\|^2]/\dtwo&\to 0,\nonumber\\
%     \exp(\tau_n)\|\Delta\nun\|.\|\Delta\nun-\Delta\nusn\|/\dtwo&\to 0,\nonumber\\
%     \nonumber
%     \exp(\tau^*_n)\|\Delta\nusn\|.\|\Delta\nun-\Delta\nusn\|/\dtwo&\to 0,\\
%     \exp(\tau_n)\|\Delta\etan\|.\|\Delta\nun-\Delta\nusn\|/\dtwo&\to 0,\nonumber\\
%     \exp(\tau^*_n)\|\Delta\etasn\|.\|\Delta\nun-\Delta\nusn\|/\dtwo&\to 0. \label{eq:nondistinguishable_independent_13}
%     % \\
%     % \exp(\tau_n)\|\Delta\bn\|.\|\Delta\nun-\Delta\nusn\|/\dtwo&\to 0,\nonumber\\
%     % \exp(\tau^*_n)\|\Delta\bsn\|.\|\Delta\nun-\Delta\nusn\|/\dtwo&\to 0\nonumber.
% \end{align}
% Now following the same arguments as above and 
% In the same way,
% consider equations \eqref{pf:d2_coefficients_7} to \eqref{pf:d2_coefficients_12}, we will have that 
% In a similar manner, by considering equations \eqref{pf:d2_coefficients_7} through \eqref{pf:d2_coefficients_12}, and applying analogous reasoning as above, we obtain that
% \begin{align}
% \label{eq:nondistinguishable_independent_14}
%     \exp(\tau_n+\tau_n^*)\cdot
%     \Vert (\Delta\etan,\Delta\nun)-(\Delta\eta^*_n,\Delta\nu^*_n)\Vert^2
%     /\dtwo&\to 0.
% \end{align}

Collecting results in equation~\eqref{eq:nondistinguishable_independent_01},
\eqref{eq:d2proof_new}
and \eqref{eq:nondistinguishable_independent_02}, and equations~\eqref{eq:nondistinguishable_independent_11} to
\eqref{eq:nondistinguishable_independent_12.3},
we obtain that
\begin{align*}
    1=\done/\done\to 0,
\end{align*}
which is a contradiction.

Hence, the coefficients appearing in the expansion of  ${W_n}/\done$ cannot all vanish as $n\to\infty$. Let $m_n$ be the largest (in the absolute values) among these coefficients. The argument above implies that  $1/m_n$ does not diverge to infinity. We now define
\begin{align}
\exp(\tau_n)
{[(\betan-\betasn)^{(w)}][(\Delta\etani)^{(u)}]}
    /m_n&\to \alpha_{11,wu0i},
%     \nonumber
% \\
% \exp(\tau_n)
% {[(\betan-\betasn)^{(w)}](\Delta\nun)}
%     /m_n\to \alpha_{21,w00},
\nonumber\\
    [\exp(\tau_n)(\Delta\etani-\Delta\etasni)^{(u)}
+[\exp(\tau^*_n)-\exp(\tau_n)]
    (-\Delta\etasni)^{(u)}]
    /m_n &\to \alpha_{10,0u0i},
\nonumber\\
[\exp(\tau_n)
{(\Delta \etani-\Delta \etasni)^{(u)}(\Delta \etani-\Delta\etasni)^{(v)}}
+[\exp(\tau^*_n)-\exp(\tau_n)]
( -\Delta \etasni)^{(u)}( -\Delta\etasni)^{(v)}]
    /m_n&\to \beta_{10,0uvi},
% \nonumber\\
% [\exp(\tau_n)
% (\Delta\nun-\Delta\nusn)
% +[\exp(\tau^*_n)-\exp(\tau_n)]
% (-\Delta\nusn)]
    % /m_n&\to \alpha_{20,000},
\nonumber\\
[\exp(\tau_n)
(\Delta \etani-\Delta \etasni)^{(u)}(\Delta \etani-\Delta\etasni)^{(v)}
+[\exp(\tau^*_n)-\exp(\tau_n)]
( -\Delta \etasni)^{(u)}( -\Delta\etasni)^{(v)}]
    /m_n&\to \beta_{20,0uvi},
% \nonumber\\
% [\exp(\tau_n)
% (\Delta\etan-\Delta\etasn)^{(u)}(\Delta\nun-\Delta\nusn)
% +[\exp(\tau^*_n)-\exp(\tau_n)]
% (-\Delta\etasn)^{(u)}(-\Delta\nusn)]
%     /m_n&\to \beta_{30,0u0},
% \nonumber\\
% [\exp(\tau_n)
% (\Delta\nun-\Delta\nusn)^2
% +[\exp(\tau^*_n)-\exp(\tau_n)]
% (-\Delta\nusn)^2]
    % /m_n&\to \beta_{40,000},
\nonumber\\
\exp(\tau^*_n+\tau_n)
(\Delta\etani-\Delta\etasni)^{(u)}
    /m_n&\to \rho_{1,u0i},
\nonumber\\
\exp(\tau^*_n+\tau_n)
( \Delta\etani-\Delta \etasni)^{(u)}(\Delta\etani -\Delta\etasni)^{(v)}
    /m_n&\to \pi_{1,uvi},
% \nonumber\\
% \exp(\tau^*_n+\tau_n)
% (\Delta\nun-\Delta\nusn)
%     /m_n&\to \rho_{2,00},
\nonumber\\
\exp(\tau^*_n+\tau_n)
(\Delta\etani -\Delta \etasni)^{(u)}( \Delta\etani-\Delta\etasni)^{(v)}
    /m_n&\to \pi_{2,uvi},
% \nonumber\\
% \exp(\tau^*_n+\tau_n)
% (\Delta\etan-\Delta\etasn)^{(u)}
% (\Delta\nun-\Delta\nusn)
%     /m_n&\to \pi_{3,u0},
% \nonumber\\
% \exp(\tau^*_n+\tau_n)
% (\Delta\nun-\Delta\nusn)^2
    % /m_n&\to \pi_{4,00},
\label{d2_all_coefficients}
\end{align}
when $n\to\infty$ for all $w\in[d], u,v\in[q]$. Note that at least one among $\alpha_{\gamma\zeta, wuv},\beta_{\gamma\zeta, wuv}$ and $\rho_{\gamma, uv},\pi_{\gamma, uv}$ where $\gamma \in [2], \zeta\in \{ 0,1\}$ must be different from zero. By applying the Fatou's lemma, we get
\begin{align*}
    0=\lim_{n\to\infty}\frac{1}{m_n}\frac{2\bbE_X[d_V(\plbgn(\cdot|X),\plbgs(\cdot|X))]}{\done}\geq \int\liminf_{n\to\infty}\frac{1}{m_n}\frac{|p_{ \Gn}(Y=s|X)-p_{ \Gsn}(Y=s|X)|}{\done}d(X,Y).
\end{align*}
On the other hand,
\begin{align}
\nonumber
    &
    \frac{1}{m_n}\frac{p_{ \Gn}(Y=s|X)-p_{ \Gsn}(Y=s|X)}{\done}
    \\
    \label{eqn:convergence_formular}
    \to &  
    \sum_{0 \leq |\gamma| \leq 2}
    \Bigg[
    \sum_{0 \leq |\zeta| \leq 1}
    E_{\gamma\zeta}(X)
    X^{\zeta}
    +
    K_{\gamma}(X)\exp(\beta^{\top}X)
    \Bigg]
    % \cdot
     \frac{\partial^{\gamma} f}{\partial h^{\gamma}}(Y=s|X)
     \cdot
    \exp(\beta^{\top}X),
\end{align}

where 
\begin{align*}
    &E_{\gamma\zeta}(X)=
     \sum_{1\leq u\leq q}
    \alpha_{11,wu0i}\frac{\partial h_i}{\partial \eta_i^{(u)}}(X,\eta_0)\ \text{where } \gamma = (0\ldots \underbrace{1}_{i}\ldots 0),\ \zeta = (0\ldots \underbrace{1}_{w}\ldots 0), 1\leq i\leq q,\ 1\leq w\leq d,\\
    % &E_{21}(X)=\frac{1}{2}\sum_{1\leq w\leq d}\alpha_{21,w00}\\
&E_{\gamma 0}(X)=
    \sum_{u=1}^{q}\alpha_{10,0u0i}
    \frac{\partial  h_i}{\partial \eta_i^{(u)}}(X,\eta_0)
    +
    \sum_{1\leq u,v\leq q}
    \frac{\beta_{10,0uvi}}{1+\mathbf{1}_{u=v}}\frac{\partial^2 h_i}{\partial \eta_i^{(u)}\partial \eta_i^{(v)}}(X,\eta_0)\ \text{where } \gamma = (0\ldots \underbrace{1}_{i}\ldots 0),
\\
&E_{\gamma 0}(X)=
    % \frac{1}{2}
    % \alpha_{20,000}
    % +
    \sum_{1\leq u,v\leq q}
    \frac{\beta_{20,0uvij}}{1+\mathbf{1}_{u=v}}
    \frac{\partial h_i}{\partial \eta_i^{(u)}}(X,\eta_0)
    \frac{\partial h_j}{\partial \eta_j^{(v)}}(X,\eta_0)\  \text{where } \gamma = (0\ldots \underbrace{1}_{i}\ldots 0) +(0\ldots \underbrace{1}_{j}\ldots 0),
% \\
% &E_{30}(X)=\frac{1}{2}
%     \sum_{u=1}^{q}
%     \beta_{30,0u0}
%     \frac{\partial h}{\partial \eta^{(u)}}(X,\eta_0) ,
% \\
% &E_{40}(X)=\frac{1}{8}
%     \beta_{40,000}.
\end{align*}
and
\begin{align*}
&K_{\gamma}(X)=
\sum_{u=1}^{q}\rho_{1,u0i}\frac{\partial  h_i}{\partial \eta_i^{(u)}}(X,\eta_0)
    +
    \sum_{i=1}^{K-1}
    \sum_{1\leq u,v\leq q}
    \frac{\pi_{1,uvi}}{1+\mathbf{1}_{u=v}}\frac{\partial^2 h_i}{\partial \eta_i^{(u)}\partial \eta_i^{(v)}}(X,\eta_0)\ \text{where } \gamma = (0\ldots \underbrace{1}_{i}\ldots 0),
\\
&K_{\gamma}(X)=
\sum_{1\leq u,v\leq q}\frac{\pi_{2,uvi}}{1+\mathbf{1}_{u=v}}
    \frac{\partial h_i}{\partial \eta_i^{(u)}}(X,\eta_0)
    \frac{\partial h_j}{\partial \eta_j^{(v)}}(X,\eta_0) \  \text{where } \gamma = (0\ldots \underbrace{1}_{i}\ldots 0) +(0\ldots \underbrace{1}_{j}\ldots 0).
% \\
% &K_{3}(X)=\frac{1}{2}
%     \sum_{u=1}^{q}
%     \pi_{3,u0}
%     \frac{\partial h}{\partial \eta^{(u)}}(X,\eta_0) ,
% \\
% &K_{4}(X)=\frac{1}{8}
%     \pi_{4,00}.
\end{align*}

Using Lemma \ref{lemma:independent_second_theorem}, we achieve that all the coefficients are equal to 0. 

% \begin{lemma}[Linear Independence of Gaussian Derivatives]
% \label{lemma:gaussian_derivatives_independent}
% Let $f(y  |  \mu, \sigma^2)$ denote the univariate Gaussian density with mean $\mu$ and variance $\sigma^2 > 0$:
% \[
% f(y  |  \mu, \sigma^2) = \frac{1}{\sqrt{2\pi\sigma^2}} \exp\left( -\frac{(y - \mu)^2}{2\sigma^2} \right).
% \]
% Then, the set of functions
% \[
% \left\{ \frac{\partial^\tau}{\partial \mu^\tau} f(y  |  \mu, \sigma^2) : \tau = 0, 1, 2, \ldots, T \right\}
% \]
% is linearly independent over $\mathbb{R}$ for any fixed $\sigma^2 > 0$ and any finite $T \in \mathbb{N}$.

% Moreover, in the multivariate case where $f(y  |  \mu, \Sigma)$ is the Gaussian density on $\mathbb{R}^d$ with mean $\mu \in \mathbb{R}^d$ and positive-definite covariance matrix $\Sigma \in \mathbb{R}^{d \times d}$, the collection of partial derivatives
% \[
% \left\{ \frac{\partial^{|\alpha|}}{\partial \mu^\alpha} f(y  |  \mu, \Sigma) : \alpha \in \mathbb{N}^d,   |\alpha| \leq T \right\}
% \]
% is linearly independent over $\mathbb{R}$ for any fixed $\Sigma$ and finite $T$.
% \end{lemma}

% It follows from the result $E_2(X)=0$ and 
% $\gamma_{\tau u}=0$ that 
% $\alpha_{30}=\beta_{\tau uv}=0$ for all $u,v$. Next, $E_1(X)=0$ implies that 
% $\alpha_{\tau u}=0$ for all $u$. 
This contradicts the fact that not all 
coefficients
% $\alpha_{\tau u}$, $\beta_{\tau uv}$ and $\gamma_{uv}$ 
vanish. Thus, we obtain the conclusion for this case.

{
\subsubsection*{Case 2: }
In this setting, we assume that $\eta_n$ and $\eta_{s,n}$ converge to a common limit that is distinct from $\eta_0$.

The formulation of the metric $D_2$ given in the proof~\ref{proof:not_equal} implies clearly that $D_1 \lesssim D_2$. 
Therefore, from our hypothesis, it is obvious that
$W_n(X,Y)
/\dtwo\to 0$ as $n\to\infty$. Since $\etan$ and $\etasn$ converge to the same limit $\eta^*\neq\eta_0$, we have $f_0 = f(y=s|x,\eta_0)$ and $f(y=s|x,\eta^*)$ satisfying $f_0$, $f$, and the derivatives of $f$ independent as in Lemma \ref{prop:independence_case_1}. We may therefore follow arguments analogous to those used in Theorem \ref{thm:not_equal} (see Appendix \ref{proof:not_equal}) to arrive at a contradiction.

\subsubsection*{Case 3: } Finally, we examine the case when $G_n$ or $G_n^*$ converges to $G_0$, whereas the other converges some $G' \neq G_0$. Without loss of generality, assume that $G_n \to G'$ and $G_n^* \to G_0$. Taking limits in 
    $\bbE_X[V(p_{ \Gn}(\cdot|X),p_{ \Gsn}(\cdot|X))]/D_1(G_n,G_n^*) \to 0,$
and noting that $ D_1(G_n,G_n^*) \to D_1(G,G_*) \neq 0$ while 
$\bbE_X[V(p_{\Gn}(\cdot|X),p_{\Gsn}(\cdot|X))] \to \bbE_X[V(p_{G}(\cdot|X),p_{G_{*}}(\cdot|X))]$,
it follows that $$\bbE_X[V(p_{G}(\cdot|X),p_{G_{*}}(\cdot|X))]= 0, \text{ or equivalently, } p_{G} = p_{G_{*}}, \text{ a.s. }$$
By the identifiability assumption, we may therefore conclude that 
% \begin{align*}
%     &f(Y|h(X,\eta_0),\nu_0) = \dfrac{1}{1+\exp(\beta^\top X + \tau^*)}f(Y|h(X,\eta_0),\nu_0) + \dfrac{\exp(\beta^\top X + \tau^*)}{1+\exp(\beta^\top X + \tau^*)}f(Y|h(X,\eta),\nu)\\
%     \Rightarrow&\dfrac{\exp(\beta^\top X + \tau^*)}{1+\exp(\beta^\top X + \tau^*)}f(Y|h(X,\eta_0),\nu_0) = \dfrac{\exp(\beta^\top X + \tau^*)}{1+\exp(\beta^\top X + \tau^*)}f(Y|h(X,\eta),\nu)\\
%     \Rightarrow& f(Y|h(X,\eta_0),\nu_0) = f(Y|h(X,\eta),\nu) \text{ (as $\exp(\beta^\top X + \tau^*) \neq 0$)}
% \end{align*}
% This equation means that 
$G' = G_0$, which is a contradiction. 
}
\end{proof}

\begin{lemma}
\label{lemma:quadratic_independency}
    Let $q$, $q_r$, $q_{ut}$ ($1\leq r,u,t\leq K$) such that 
    \begin{enumerate}
        \item $q_{ut} = q_{tu}$ for all $1\leq u,t\leq K$. 
        \item The following equality holds for all $(p_1,\ldots,p_K)$ lies in the $K$-simplex $\Delta_K$ (i.e. $0\leq p_i \leq 1$, and $\sum_{i=1}^K p_i = 1$): 
    \begin{equation}
    \label{eqn:dependency}
        q\cdot p_s + \sum_{r=1}^{K} q_rp_s(\delta_{sr} - p_r) + \sum_{u,t=1}^{K} q_{ut}p_{s} \left[(\delta_{su} - p_u)(\delta_{st} - p_t) - p_u(\delta_{ut} - p_t)\right] = 0.
    \end{equation}
    \end{enumerate}
    Then, it is necessary that 
    \begin{enumerate}
        \item $q = 0$. 
        \item $q_r = q_s$ for all $1\leq r,s \leq K$. 
        \item $2q_{uv} = 2q_{vu} = q_{uu} + q_{vv}$ for all $1\leq u,v \leq K$. 
    \end{enumerate}
\end{lemma}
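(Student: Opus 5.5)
The plan is to reduce the identity to the vanishing of a homogeneous quadratic form on an open cone, and then read the three conclusions off the entries of that form. Throughout, $s$ is the fixed index in \eqref{eqn:dependency}; the argument uses only that single identity, so it does not matter whether the hypothesis holds for one $s$ or for all of them.

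\emph{Step 1: divide by $p_s$ and expand.} On the relative interior of $\Delta_K$ we have $p_s>0$, so we may divide \eqref{eqn:dependency} by $p_s$. Using $(\delta_{su}-p_u)(\delta_{st}-p_t)=\delta_{su}\delta_{st}-\delta_{su}p_t-\delta_{st}p_u+p_up_t$ together with the symmetry $q_{ut}=q_{tu}$, the bracket sums to
\[
q_{ss}-2\sum_t q_{st}p_t-\sum_u q_{uu}p_u+2\sum_{u,t}q_{ut}p_up_t .
\]
Define
\[
c:=q+q_s+q_{ss},\qquad a_r:=q_r+2q_{sr}+q_{rr}.
\]
The identity then becomes
\[
c-\sum_r a_rp_r+2\,p^\top Q p=0 \quad \text{on the relative interior of } \Delta_K,
\]
where $Q=(q_{ut})$.

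\emph{Step 2: homogenize.} On the simplex we may replace $1$ by $\sum_r p_r$. Write the constant as $c(\sum_r p_r)^2$ and the linear term as $(\sum_r a_rp_r)(\sum_t p_t)$. The identity then reads $p^\top Mp=0$, where $M$ is the symmetric matrix with entries
\[
M_{rt}=c-\tfrac12(a_r+a_t)+2q_{rt}.
\]
This form is homogeneous of degree $2$. It therefore vanishes on the open cone $\{t p: t>0,\ p \text{ in the relative interior of } \Delta_K\}$, which is a nonempty open subset of $\mathbb{R}^K$. A polynomial vanishing on a nonempty open set is the zero polynomial, so $M=0$. This open-set/polynomial step is the only point that needs any care. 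In particular, we never divide by $p_s$ on the boundary of the simplex, which avoids the issue there.

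\emph{Step 3: read off the conclusions.} First, $M_{rr}+M_{tt}-2M_{rt}=2(q_{rr}+q_{tt}-2q_{rt})=0$, which gives conclusion 3. By symmetry of $Q$ it also gives $2q_{vu}$. Consequently $q_{rt}=b_r+b_t$ with $b_r:=q_{rr}/2$. Substituting this into $a_r$ gives $a_r=q_r+2b_s+4b_r$. Then the diagonal condition becomes
\[
M_{rr}=c-a_r+4b_r=q+q_s-q_r=0 \quad\text{for every } r.
\]
Taking $r=s$ yields $q=0$ (conclusion 1). Then $q_r=q_s$ for all $r$, and hence all the $q_r$ are equal (conclusion 2).
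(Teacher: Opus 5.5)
Your proof is correct, and it takes a different route from the paper's.

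\textbf{The paper's route.} The paper uses the hypothesis for every $s\in[K]$. Summing \eqref{eqn:dependency} over $s$ removes all the derivative terms, since they are logit-derivatives of softmax probabilities that sum to one. This leaves $q=0$. The paper then divides by $p_s$ and subtracts the identities for two indices $s$ and $t$, so the $s$-independent quadratic part $2\sum_{u,t}q_{ut}p_up_t$ cancels. It reads off the coefficients of the resulting affine identity on the simplex. Setting $r=s$ gives $q_s-q_t=q_{ss}+q_{tt}-2q_{st}=q_t-q_s$.

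\textbf{Your route.} You work with a single fixed $s$. You homogenize to the quadratic form $p^\top Mp$ on the open positive orthant and conclude $M=0$. The three conclusions then follow from $M_{rr}+M_{tt}-2M_{rt}=0$ and $M_{rr}=q+q_s-q_r=0$.

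\textbf{What each buys.} The paper's differencing trick is shorter, because it reduces everything to an affine identity and never handles the quadratic coefficients. Your route gives more:
\begin{itemize}
\item One index $s$ suffices, which is a stronger statement and makes the ambiguous quantifier over $s$ in the hypothesis irrelevant.
\item The coefficient extraction is justified more cleanly: you divide by $p_s$ only on the relative interior, then apply the polynomial-on-an-open-set argument.
\item You get the whole solution set at once. Since $M=0$ is equivalent to the three conclusions, the converse stated at the end of the paper's proof is immediate.
\end{itemize}
Your explicit expansion also avoids a harmless sign slip in the paper. Its linear coefficient carries $-q_{rr}$ where it should be $+q_{rr}$; that term cancels in the $s$-versus-$t$ difference, so the paper's conclusions are unaffected.
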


\begin{proof}
To prove that $q = 0$, we sum up the equation \eqref{eqn:dependency} for $1\leq s\leq K$, and we receive 
\begin{align*}
    0 &= q\sum_{s=1}^Kq_s + \sum_{s=1}^K\sum_{r=1}^{K} q_rp_s(\delta_{sr} - p_r) + \sum_{s=1}^K\sum_{u,t=1}^{K} q_{ut}p_{s} \left[(\delta_{su} - p_u)(\delta_{st} - p_t) - p_u(\delta_{ut} - p_t)\right] \\
    &= q + \sum_{r=1}^{K}q_rp_r - \sum_{s=1}^{K}q_rp_r\left(\sum_{s=1}^Kp_s\right) + 2\left(\sum_{s=1}^Kp_s\right)\left(\sum_{u,t=1}^{K}q_{ut}p_up_t\right) - \sum_{s=1}^K\sum_{u,t=1}^{K}\delta_{su}q_{ut}p_sp_t\\
    &- \sum_{s=1}^K\sum_{u,t=1}^{K}\delta_{st}q_{ut}p_up_s - \sum_{s=1}^K\sum_{u,t=1}^{K}\delta_{su}q_{ut}\delta_{st}p_s -\sum_{s=1}^K\sum_{u,t=1}^{K}q_{ut}p_sp_u\delta_{ut}\\
    &= q +\sum_{r=1}^{K}q_rp_r + 2 \sum_{u,t=1}^{K} q_{ut}p_up_t - 2\sum_{u,t=1}^{K}q_{ut}p_up_t - \sum_{r=1}^{K}q_rp_r = q.
\end{align*}
As a result, we achieve that $q = 0$. By putting $q = 0$ into equation \eqref{eqn:dependency}, we have 
\begin{equation}
    \label{eqn:dependency_shorter_version}
    \sum_{r=1}^{K} q_r(\delta_{sr}-p_r) + \sum_{u,t=1}^{K}  q_{ut}\left((\delta_{su} - p_u)(\delta_{st} - p_t) - p_u(\delta_{ut} - p_t)\right) = 0.
\end{equation}
Equation \eqref{eqn:dependency_shorter_version} implies that 
\begin{align*}
    0 = (q_s+q_{ss}) -\sum_{r=1}^{K}(q_r+q_{rs}+q_{sr}-q_{rr})p_r + 2\sum_{u,t=1}^{K}q_{ut}p_up_t. 
\end{align*}

As a result, we have for all $s$ and $t$, 
\begin{equation*}
    (q_s + q_{ss}) - \sum_{r=1}^K(q_r+q_{rs} + q_{sr} -q_{rr})p_r = (q_t + q_{tt}) - \sum_{r=1}^K(q_r+q_{rt} + q_{tr} -q_{rr})p_r \Leftrightarrow (q_s+q_{ss} - q_t - q_{tt}) - \sum_{r=1}^K (2q_{rs} - 2q_{rt})p_r =0,
\end{equation*}
which holds for all $(p_1,\ldots,p_K) \in \Delta_K$. This only happens when 
\begin{equation}
\label{eqn:s_t_r_relation}
    q_s + q_{ss} - q_{t} - q_{tt} = 2q_{rs} - 2q_{rt}, \ \forall s,t,r \in [K].  
\end{equation}
By letting $r = s$ in \eqref{eqn:s_t_r_relation}, we have 
$q_s -q_t + q_{ss} - q_{tt} = 2q_{ss} - 2q_{st}$, or $q_s-q_t = q_{ss} + q_{tt} - 2q_{st}$. Similarly, we have $q_t-q_s = q_{ss} + q_{tt} - 2q_{st}$, thus $q_t-q_s = q_{ss} + q_{tt} - 2q_{st} = 0$. It follows that $q_r = q_s$ for all $1\leq r,s\leq K$ and $2q_{uv} = 2q_{vu}=q_{uv} + q_{vv}$. Conversely, it is straightforward to show that when $q,q_r,q_{ut}$ satisfies that $q = 0$, $q_r = q_s$ for all $1\leq r,s\leq K$, and $2q_{uv} = 2q_{vu} = q_{uu} + q_{vu}$, then \eqref{eqn:dependency} holds. This completes our proof. 
% As a result, we have that all the coefficients in zero, first, and second order with respect to $\{p_r,1\leq r\leq K\}$ are zeros. Let us consider the coefficient with respect to each order.

% $\bullet$ Second order: We have $q_{ut}+q_{tu} = 0$, thus from the hypothesis that $q_{ut} = q_{tu}$, we achieve that $q_{ut} = q_{tu} = 0$, $1\leq u,t\leq K$. 

% $\bullet$ First order: We have $q_r + q_{rs} +q_{sr} - q_{rr} = 0$, given that $q_{rs} = q_{sr} = q_{rr} = 0$, we have $q_r = 0$, $1\leq r\leq K$. 

% Finally, we have all the coefficients are equal to 0: 
% \begin{equation*}
%         q = 0, \ q_{r}  = 0, \ q_{uv} = 0, \quad 1\leq r,u,v \leq K-1. 
% \end{equation*}
\end{proof}
\begin{lemma}
\label{lemma:independent_second_theorem}
The collection $\mathcal{L} \cup \mathcal{K}$ is linearly independent, i.e. equation  \eqref{eqn:convergence_formular} implies that all the coefficients are equal to zero. 
\end{lemma}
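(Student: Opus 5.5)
The plan is to reduce the identity \eqref{eqn:convergence_formular} to a pointwise statement in $Y=s$ and then peel off the $X$-dependence using the strong identifiability condition. First, compute the derivatives of the softmax map $f(Y=s\mid X;\eta)=\exp(h_s)/\sum_i\exp(h_i)$ with respect to the logits. Writing $p_r:=f(Y=r\mid X;\eta_0)$, one has $\partial f_s/\partial h_r=p_s(\delta_{sr}-p_r)$ and $\partial^2 f_s/\partial h_u\partial h_t=p_s[(\delta_{su}-p_u)(\delta_{st}-p_t)-p_u(\delta_{ut}-p_t)]$. Hence, after dividing by $\exp(\beta^\top X)>0$, the limit \eqref{eqn:convergence_formular} vanishing for a.e.\ $(X,Y)$ becomes, for each fixed $X$ and every $s\in[K]$, exactly an identity of the form \eqref{eqn:dependency}. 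Its coefficients are
\begin{align*}
q(X)&=\sum_{|\zeta|=1}E_{0\zeta}(X)X^{\zeta},\\
q_r(X)&=\sum_{|\zeta|\le 1}E_{e_r\zeta}(X)X^{\zeta}+K_{e_r}(X)\exp(\beta^\top X),\\
q_{ut}(X)&=E_{e_u+e_t,0}(X)+K_{e_u+e_t}(X)\exp(\beta^\top X),
\end{align*}
symmetrized in $(u,t)$, where $e_r$ denotes the $r$-th canonical index vector. The vector $(p_1,\dots,p_K)$ ranges over the simplex as the logits vary, and the identity \eqref{eqn:dependency} is polynomial in $p$. So I will apply Lemma~\ref{lemma:quadratic_independency} pointwise in $X$, either by arguing that the identity is an algebraic identity in $p$, or by treating the derivative functions as abstract elements as in the lemma. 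This gives $q(X)=0$, $q_r(X)$ independent of $r$, and $2q_{uv}(X)=q_{uu}(X)+q_{vv}(X)$ for a.e.\ $X$.

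Second, I translate these three relations into linear relations among functions of $X$ and invoke Definition~\ref{appendix_def:distinguish_linear_independent}. The relation $q\equiv 0$ is a combination of the functions $x^{(w)}\partial h_i/\partial\eta_i^{(u)}$, which belong to the first-order pullback set. Linear independence there forces $\alpha_{11,wu0i}=0$, so all $\beta$-cross coefficients vanish. Next, $q_r-q_s\equiv0$ for $r\neq s$ is a combination of $\partial h_r/\partial\eta_r^{(u)}$, $\partial^2 h_r/\partial\eta_r^{(u)}\partial\eta_r^{(v)}$ and their $\exp(\beta^\top x)$-multiples, minus the same terms with index $s$. These are distinct members of the first-order set (the $i$-index differs), so each coefficient $\alpha_{10,\cdot}$, $\beta_{10,\cdot}$, $\rho_{1,\cdot}$, $\pi_{1,\cdot}$ is zero. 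Finally, $2q_{uv}-q_{uu}-q_{vv}\equiv0$ is a combination of products $\partial h_i/\partial\eta_i^{(a)}\cdot\partial h_j/\partial\eta_j^{(b)}$ and their $\exp(\beta^\top x)$-multiples, with $(i,j)$ running over $(u,v),(u,u),(v,v)$. These are distinct elements of the second-order pullback set, so $\beta_{20,\cdot}$ and $\pi_{2,\cdot}$ vanish. Once all coefficients are zero we recover the claimed linear independence of $\mathcal{L}\cup\mathcal{K}$.

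The main obstacle is the bookkeeping in this translation. I must make sure that no two of the functions produced coincide as elements of the independence sets. Two places need care. First, the symmetrization $q_{ut}=q_{tu}$ must be handled so that the coefficient of $\partial h_i\partial h_j$ and that of $\partial h_j\partial h_i$ are merged correctly. Second, the monomials $X^{\zeta}$ multiplying $f$ itself (the $\mathcal{L}_{0,1}$ terms) must be absorbed into $q$, which Lemma~\ref{lemma:quadratic_independency} kills directly. I also expect a subtlety in the application of Lemma~\ref{lemma:quadratic_independency}. The lemma alone only yields the differences $q_r-q_s$ and $2q_{uv}-q_{uu}-q_{vv}$, reflecting the softmax shift-invariance. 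So I would first fix a reference class, e.g.\ $\eta_K$ held at $\eta_{0K}$ as suggested by the indexing $i\le K-1$ elsewhere in the paper, so that the $K$-th terms vanish. Then $q_r\equiv q_K=0$ and $q_{uv}=0$, and full vanishing follows.
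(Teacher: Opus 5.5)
Your route is the paper's route: rewrite \eqref{eqn:convergence_formular} through the softmax derivatives, invoke Lemma~\ref{lemma:quadratic_independency}, then apply Definition~\ref{appendix_def:distinguish_linear_independent}. However, the step you yourself flag is a genuine gap, and neither of your two justifications closes it. Lemma~\ref{lemma:quadratic_independency} needs \eqref{eqn:dependency} to hold for \emph{every} $p$ in the simplex with the coefficients held fixed. At a fixed $X$ you only have the single point $p(X)=\bigl(f(Y=t\mid X;\eta_0)\bigr)_{t\in[K]}$, and $q(X),q_r(X),q_{ut}(X)$ move with $X$ together with $p(X)$, so there is no algebraic identity in $p$. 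Nor can the derivatives be treated as abstract elements: with $X$-dependent coefficients they satisfy many more relations than the lemma records, since at one $X$ the $1+K+K(K+1)/2$ vectors $\bigl(\partial^{\gamma}f(Y=s\mid X)\bigr)_{s\in[K]}$ lie in $\mathbb{R}^K$. Summing the $K$ equations does give $q(X)=0$. Dividing the $s$-th equation by $p_s$ then shows only that $q_s+q_{ss}-2\sum_t q_{st}p_t$ is independent of $s$, i.e.\ for $s\neq s'$,
\[
\bigl(q_s-q_{s'}\bigr)(X)+\bigl(q_{ss}-q_{s's'}\bigr)(X)-2\sum_{t=1}^{K}\bigl(q_{st}-q_{s't}\bigr)(X)\,p_t(X)=0 .
\]
For $K=2$ this is the single relation $(q_1-q_2)(X)+\bigl(p_2(X)-p_1(X)\bigr)\bigl(q_{11}+q_{22}-2q_{12}\bigr)(X)=0$. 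Splitting it into $q_1\equiv q_2$ and $2q_{12}\equiv q_{11}+q_{22}$ requires the first-order pullbacks to be linearly independent jointly with the second-order pullbacks multiplied by the softmax weights $p_t(x)$. Definition~\ref{appendix_def:distinguish_linear_independent} does not provide this. The worry is not vacuous. Take $K=2$, $q=1$, and suppose $\frac{\partial^2 h}{\partial\eta^2}(x,\eta_{01})=\bigl(p_2(x)-p_1(x)\bigr)\bigl(\frac{\partial h}{\partial\eta}(x,\eta_{01})\bigr)^2$. Then $-\frac{\partial^2 h_1}{\partial\eta_1^2}\frac{\partial f}{\partial h_1}+\bigl(\frac{\partial h_1}{\partial\eta_1}\bigr)^2\frac{\partial^2 f}{\partial h_1^2}\equiv 0$ is a nontrivial relation in $\mathcal{L}_{1,0}\cup\mathcal{L}_{2,0}$. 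The definition does not forbid it, because it is not a constant-coefficient dependence among the functions the definition lists. The paper's proof makes exactly the same move: it asserts the conclusions of Lemma~\ref{lemma:quadratic_independency} and then appeals to distinguishability. So following it does not repair the step. You need an identifiability hypothesis that covers these $p$-weighted products, or a different argument.

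There are also two smaller bookkeeping points. First, the cross coefficients $\alpha_{11,wu0i}$ multiply $X^{(w)}\frac{\partial h_i}{\partial\eta_i^{(u)}}\frac{\partial f}{\partial h_i}$, so they sit in $q_i$, not in $q$. The coefficient of $f$ itself only receives pure-$\beta$ terms, and these cancel in $\mathrm{I}_n-\mathrm{II}_n$. Hence $q\equiv 0$ says nothing about $\alpha_{11}$, and those terms have to be separated inside $q_r-q_s$; your description of that step omits the $x^{(w)}\partial h_r/\partial\eta_r^{(u)}$ terms. Second, the reference-class normalization is unnecessary and changes the statement. The family $\mathcal{L}\cup\mathcal{K}$ has $i$ ranging over all of $[K]$. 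Since Definition~\ref{appendix_def:distinguish_linear_independent} asserts independence jointly across $i$, a relation $q_r-q_s\equiv0$ with $r\neq s$ would already kill the coefficients of both classes, exactly as your second paragraph uses.
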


\begin{proof}
    Using Lemma \ref{lemma:quadratic_independency}, we achieve that  
    % \begin{align*}
    %     E_{11}(X) &= \sum_{i=1}^{K-1} \sum_{1\leq w \leq d,1\leq u\leq q}\alpha_{11,wu0i} \dfrac{\partial h_i}{\partial \eta^{(u)}_i}(X,\eta_0) = 0,\\
    %     E_{10}(X) &= \sum_{i=1}^{K-1}\sum_{u=1}^{q} \alpha_{10,0u0i}\dfrac{\partial h_i}{\partial \eta_{i}^{(u)}}(X,\eta_0) + \sum_{i=1}^{K-1}\sum_{1\leq u,v\leq q} \dfrac{\beta_{10,0uvi}}{1+\mathbf{1}_{u=v}}\dfrac{\partial^2h_i}{\partial \eta_i^{(u)}\partial\eta_j^{(v)}}(X,\eta_0) = 0,\\
    %     E_{20}(X) &= \sum_{i=1}^{K-1}\sum_{j=1}^{K-1} \sum_{1\leq u,v\leq q}\dfrac{\beta_{20,0uvij}}{1+\mathbf{1}_{u=v}}\dfrac{\partial h_i}{\partial \eta_i^{(u)}} (X,\eta_0)\dfrac{\partial h_j}{\partial \eta_j^{(v)}}(X,\eta_0) = 0,\\
    %     K_1(X) &= \sum_{i=1}^{K-1}\sum_{u=1}^{q} \varrho_{1,u0i}\dfrac{\partial h_i}{\partial \eta_i^{(u)}}(X,\eta_0) + \sum_{i=1}^{K-1} \sum_{1\leq u,v\leq q} \dfrac{\pi_{1,uvi}}{1+\mathbf{1}_{u=v}}\dfrac{\partial^2 h_i}{\partial \eta_i^{(u)}\partial \eta_i^{(v)}}(X,\eta_0) = 0,\\  
    %     K_2(X) &= \sum_{i=1}^{K-1} \sum_{j=1}^{K-1} \sum_{1\leq u,v\leq q} \dfrac{\pi_{2,uvi}}{1+\mathbf{1}_{u=v}}\dfrac{\partial h_i}{\partial \eta_i^{(u)}}\dfrac{\partial h_j}{\partial \eta_i^{(v)}} = 0. 
    % \end{align*}
    \begin{align*}
        \sum_{0\leq|\zeta| \leq 1 }E_{\gamma\zeta}(X)X^{\zeta} + K_{\gamma}(X)\exp(\beta^\top X) = \sum_{0\leq|\zeta| \leq 1 }E_{\gamma'\zeta}(X)X^{\zeta} + K_{\gamma'}(X)\exp(\beta^\top X) 
    \end{align*}
    $\text{ for } \gamma' \neq \gamma  \text{ and }|\gamma| = |\gamma'| = 1$ and $2H_{ij} = H_{ii} + H_{jj}$, where
    \begin{align*}
        H_{uv} = \sum_{0\leq |\zeta| \leq 1} E_{\gamma \zeta}(X) X^{\zeta} + K_{\gamma}(X)\exp(\beta^\top X) \text{ where } \gamma = (0\ldots \underbrace{1}_{u-\text{th}}\ldots 0) +(0\ldots \underbrace{1}_{v-\text{th}}\ldots 0).
    \end{align*}
    From the hypothesis about the distinguishablity of function $h$, we can achieve that all the coefficients are equal to 0. 
\end{proof}

\subsubsection{Proof of Theorem \ref{thm:d2_minimax}}
% \subsection{$D_2$ loss minimax}
\label{apppf:d2_minimax}
To establish minimax lower bounds in the homogeneous regime (Theorem~\ref{thm:d1_minimax}), we begin by introducing two loss functionals that capture different sources of parameter perturbation. For any
$G_1=(\beta_1,\tau_1,\eta_1)\in\Xi$ and $G_2=(\beta_2,\tau_2,\eta_2)\in\Xi$, define
\begin{align*}
    d_1(G_1,G_2)
    &:=
   \sum_{i=1}^{K}\Vert \Delta \eta_{1i} \Vert ^2
    \times| \exp(\tau_1) -\exp(\tau_2) |
    ,
    \\
    d_2(G_1,G_2)
    &:=\exp(\tau_1)
    \times
    \sum_{i=1}^{K}
    \Vert \Delta \eta_{1i}\Vert
    \left(\Vert \beta_1- \beta_2 \Vert+\Vert \eta_{1i}- \eta_{2i} \Vert \right)
    .
\end{align*}

Symmetry of $d_1$ and is recovered only in the restricted case where $\tau_1=\tau_2=\tau$ and symmetry of $d_1$ and is recovered only in the restricted case where $\eta_{1i}=\eta_{2i}$. Nonetheless both $d_1$ and $d_2$ continue to satisfy a weak triangle inequality in general. To accommodate this asymmetry, our lower-bound analysis relies on a variant of Le Cam’s method tailored to nonsymmetric losses, following Lemma~C.1 of \cite{gadat2020parameter}.

For multinomial logistic MoE model satisfies all assumptions in Theorem \ref{thm:d1_minimax}, based on the Taylor expansion, we have the following results:
\begin{lemma}
\label{prop:lower-distinguish}
    Under the assumptions in Theorem \ref{thm:d1_minimax}, 
we denote
\begin{align*}
    S_{1 } = (\tau_1 , \beta, \eta), 
    S_{2 } = (\tau_2 , \beta, \eta),
    ~\text{and}~
    S'_{1} = (\tau, \beta_1, \eta_1), 
    S'_{2} = (\tau, \beta_2, \eta_2),
\end{align*}
    we achieve for any $r < 1$ that
\begin{align*}
        &\text{(i)}~~
            \lim_{\epsilon \rightarrow 0} 
        \inf_{S_1,S_2
        }
        \left\{\displaystyle
        \frac{\bbE_X[d_H\left(p_{S_1}(\cdot|X), p_{S_2}(\cdot|X)\right)] }{d_1^r\left(S_1,S_2\right)}
        : d_1\left(S_1,S_2\right) \leq \epsilon\right
        \}=0,\\
        &\text{(ii)}~~
            \lim_{\epsilon \rightarrow 0} 
        \inf_{S'_1,S'_2
        }
        \left\{\displaystyle
        \frac{\bbE_X[d_H\left(p_{S'_1}(\cdot|X), p_{S'_2}(\cdot|X)\right)] }{d_2^r\left(S'_1, S'_2\right)}
        : d_2\left(S'_1, S'_2\right) \leq \epsilon\right
        \}=0.
        \end{align*}
\end{lemma}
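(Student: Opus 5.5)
To prove both parts, I will exhibit explicit sequences of pairs along which the Hellinger distance is of order $d_j$ (or smaller), while $d_j\to 0$. Since $r<1$, a ratio of order $d_j^{1-r}$ then tends to zero. The two facts I rely on are that the total variation and Hellinger distances between conditional multinomial laws are controlled by the sup-norm of the difference of conditional probabilities, and that $\mathcal X$ is bounded and $\Xi$ compact. Together these give $\mathbb{E}_X[d_H(p_{G_1}(\cdot|X),p_{G_2}(\cdot|X))]\lesssim \sup_{x,s}|p_{G_1}(s|x)-p_{G_2}(s|x)|$ up to a constant. The last step uses that all probabilities are bounded below on the compact set, so $d_H^2\le C\,d_V$ can be sharpened to $d_H\lesssim \|p_{G_1}-p_{G_2}\|_\infty$.

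\emph{Part (i).} I fix $\beta,\eta$ with $\Delta\eta\neq 0$ and take $\tau_2=\tau_1+\epsilon_n$ with $\epsilon_n\to0$. Then $d_1(S_1,S_2)=\sum_i\|\Delta\eta_{i}\|^2\,|e^{\tau_1}-e^{\tau_2}|\asymp \epsilon_n$. The difference of the models is
\[
p_{S_1}(s|x)-p_{S_2}(s|x)=\bigl[\sigma(\beta^\top x+\tau_1)-\sigma(\beta^\top x+\tau_2)\bigr]\bigl(f(s|x,\eta)-f_0(s|x,\eta_0)\bigr),
\]
where $\sigma$ is the logistic function. This is $O(\epsilon_n\|\Delta\eta\|)$ by the mean value theorem, since in the homogeneous regime $f(\cdot|x,\eta)-f(\cdot|x,\eta_0)=O(\|\Delta\eta\|)$. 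Hence the Hellinger distance is $\lesssim \epsilon_n$, and the ratio is $\lesssim \epsilon_n^{1-r}\to0$. There is even slack: if I instead let $\|\Delta\eta\|=\delta_n\to0$ together with $\epsilon_n$, the ratio becomes $\epsilon_n\delta_n/(\delta_n^2\epsilon_n)^r$, which still tends to zero if I choose $\delta_n$ appropriately, for example $\delta_n$ fixed.

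\emph{Part (ii).} I fix $\tau$ and take $S_1'=(\tau,\beta_1,\eta_1)$ with $\Delta\eta_1\neq0$ fixed, and $S_2'=(\tau,\beta_1+\epsilon_n u,\eta_1+\epsilon_n v)$ for fixed directions $u,v$. Then $d_2(S_1',S_2')\asymp \epsilon_n$. A first-order Taylor expansion of $p_G$ in $(\beta,\eta)$, whose derivatives are bounded uniformly over the compact set and bounded $\mathcal X$, gives $\|p_{S_1'}-p_{S_2'}\|_\infty\lesssim\epsilon_n$. The ratio is therefore again $\lesssim\epsilon_n^{1-r}\to0$.

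The only delicate point is that the pairs must lie in $\Xi(l_n)$, as required for Theorem~\ref{thm:d2_minimax}, and the membership condition in $\Xi(l_n)$ involves $\exp(\tau)$ and the coordinates of $\eta,\beta$. I will choose base points with $\tau$ fixed and all coordinates of $\eta,\beta$ bounded away from zero. Then, for $n$ large, the lower bound $l_n/(\min\{\cdot\}\sqrt n)\le e^{\tau}$ holds once $l_n=o(\sqrt n)$, and the perturbations keep the coordinates away from zero. I also need to check that the upper bound on $d_H$ in terms of the sup-norm uses only quantities uniform over the compact set; this is the step I would verify most carefully.
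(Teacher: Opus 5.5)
Your proposal is correct and follows essentially the same route as the paper. In both, the pairs differ only in $\tau$ for (i) and only in $(\beta,\eta)$ for (ii); a uniform lower bound on the conditional probabilities turns the perturbation into a Hellinger bound of the same order; and the ratio then behaves like $d^{1-r}\to 0$. The only technical difference is that the paper uses the $\chi^2$-type bound $d_H^2\lesssim\int((p_1-p_2)/p_2)^2$, whereas you use $d_H\lesssim\|p_1-p_2\|_\infty$.

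Keeping $\Delta\eta\neq 0$ fixed, so that $d_1$ and $d_2$ are of the same order as the perturbation, is actually more faithful to the stated definitions of $d_1$ and $d_2$ than the paper's own write-up. That write-up silently replaces the loss by $|\exp(\tau_1)-\exp(\tau_2)|^2$.
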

% We will prove this lemma later.
The proof of Lemma~\ref{prop:lower-distinguish} is deferred to a later section.

\begin{proof}[Proof of Theorem \ref{thm:d2_minimax}]
Denote $\Gs=(\betas,\taus,\etas)$ and assume $r<1$. 
By Lemma \ref{prop:lower-distinguish} (i) , for any sufficiently small $\epsilon > 0$, there exists 
$ G'_{*} = (\beta^{*}_1,\tau^{*},  \eta^{*}_1) $ 
such that 
$d_1( G_{*} , G'_{*}) = d_1(G'_{*},  G_{*} ) = \epsilon $. Moreover, there exists a constant $C_0$ such that 
\begin{align}
\label{apppf:lower1-hellinger-distance}
    \bbE_X\left[d_H(p_{ G_{*}}(\cdot|X), p_{ G'_{*}}(\cdot|X))\right] \leq C_0 \epsilon^r.
\end{align} 
Now we denote $p^n_{ G_{*}}$ as the multinomial logistic model of the $n$-i.i.d. sample $(X_1,Y_1),\cdots,(X_n,Y_n)$.
Using Lemma C.1 in \cite{gadat2020parameter} for metric $d_1$, we have 
% \begin{align*}
%     \inf_{\llbgn \in \Xi }\sup_{\lbg\in \Xi }\mathbb{E}_{\plbg} \Big( \exp^2(\tau) \Vert (\overline{\beta}_n, \overline{\eta}_n, \overline{\nu}_n)-(\beta,\eta,\nu) \Vert^2 \Big) 
%     &\geq \frac{\epsilon^2}{2}\Big(1-\bbE_X[V(p^n_{ G_{*}}(\cdot|X), p^n_{ G'_{*}}(\cdot|X))] \Big)
%     \\
%     &
%     \geq \frac{\epsilon^2}{2}
%     \sqrt{1-\left( 1-C_0^2\epsilon^{2r} \right)^n}.
% \end{align*}
% \begin{align*}
%     d_1(G_1,G_2)
%     &:=
%     \exp(\tau_1)
%     \left(
%         \|\beta_1 - \beta_2\|
%         +
%         \sum_{i=1}^{K-1}
%         \|\eta_{1i} - \eta_{2i}\|
%     \right),
%     \\
%     d_2(G_1,G_2)
%     &:=
%     \bigl|
%         \exp(\tau_1) - \exp(\tau_2)
%     \bigr|^2 .
% \end{align*}
\begin{align*}
\label{proof:lower-distinguish-eq1}
    \inf_{\llbgn \in \Xi }\sup_{\lbg\in \Xi }\mathbb{E}_{\plbg} 
    \left[  \sum_{i=1}^{K}\Vert \Delta \eta_{1i} \Vert ^4
    \times| \exp(\tau_1) -\exp(\tau_2) |^2
    \right]
    &\geq \frac{\epsilon^2}{2}\Big(1-\bbE_X[V(p^n_{ G_{*}}(\cdot|X), p^n_{ G'_{*}}(\cdot|X))] \Big)
    \\
    &
    \geq \frac{\epsilon^2}{2}
    \sqrt{1-\left( 1-C_0^2\epsilon^{2r} \right)^n}.
\end{align*}

The final inequality follows from the definitions of the total variation distance and the Hellinger distance, together with \eqref{apppf:lower1-hellinger-distance}.
Let $\epsilon^{2r}={C_0^{-2}}n^{-1}$, then for every $r<1$ we have
\begin{align*}
    \inf_{\llbgn\in \Xi }\sup_{\lbg\in \Xi }\mathbb{E}_{\plbg} 
    % \Big( \exp^2(\tau) \Vert (\overline{\beta}_n, \overline{\eta}_n, \overline{\nu}_n)-(a,b,\nu) \Vert^2 \Big) 
    \left[ 
    \sum_{i=1}^{K}\Vert \Delta \eta_{1i} \Vert ^4
    \times| \exp(\tau_1) -\exp(\tau_2) |^2
    \right]
    \geq c_1 n^{-1/r},
\end{align*}
where $c_1$ denotes a positive constant.
By applying an analogous line of reasoning and invoking Lemma \ref{prop:lower-distinguish} part (ii), we obtain
\begin{align*}
    \inf_{\llbgn\in \Xi }\sup_{\lbg\in \Xi }\mathbb{E}_{\plbg} \left[ \sum_{i=1}^{K}
    \Vert \Delta \eta_{1i}\Vert^2
    \left(\Vert \beta_1- \beta_2 \Vert^2+\Vert \eta_{1i}- \eta_{2i} \Vert^2 \right) \right] \geq c_2 n^{-1/r},
\end{align*}
for some positive constant $c_2$. 
Consequently, we establish all of the results for Theorem \ref{thm:d1_minimax}. 
\end{proof}

\begin{proof}[Proof of Lemma \ref{prop:lower-distinguish} (i)]
% We consider two sequences
% \begin{align*}
% S'_{1,n} &= (\tau_{1,n}, \beta_n, \eta_n, \nu_n), \\
% S'_{2,n} &= (\tau_{2,n}, \beta_n, \eta_n, \nu_n),
% \end{align*}
% with different $\tau_{1,n} \neq \tau_{2,n}$ but the same $(\beta_n, \eta_n, \nu_n)$. 
We consider two parameter sequences  
$S'_{1,n} = (\tau_{1,n}, \beta_n, \eta_n), 
S'_{2,n} = (\tau_{2,n}, \beta_n, \eta_n),$
which differ only in their scale parameters, with $\tau_{1,n} \neq \tau_{2,n}$, while share the same $(\beta_n, \eta_n)$.
Then we have that
\begin{align*}
p_{S'_{1,n}}(y=s  |  x) - p_{S'_{2,n}}(y=s  |  x)
= \frac{e^{\beta_n^\top x} \left( e^{\tau_{2,n}} - e^{\tau_{1,n}} \right)}
{ \left( 1 + e^{\beta_n^\top x + \tau_{1,n}} \right) \left( 1 + e^{\beta_n^\top x+ \tau_{2,n}} \right) }
\cdot \left[\frac{\exp(h(x,\eta_{ns}))}{\sum_{j=1}^K \exp(h(x,\eta_{nj}))}-
\frac{\exp(h(x,\eta_{0s}))}{\sum_{j=1}^K \exp(h(x,\eta_{0j}))}
% f(Y  |  h(X, \eta_n), \nu_n) - f_0(Y  |  h_0(X,\eta_0), \nu_0) 
\right].
\end{align*}

By a standard bound for the squared Hellinger distance, we have
\begin{align*}
\mathbb{E}_X \left[
d_{H}^2 \left(
p_{S'_{1,n}}(\cdot | X),
p_{S'_{2,n}}(\cdot | X)
\right)
\right]
\le
C
\int
\left(
\frac{
p_{S'_{1,n}}(Y | X) - p_{S'_{2,n}}(Y | X)
}{
p_{S'_{2,n}}(Y | X)
}
\right)^2
  d(X,Y).
\end{align*}
Since $(\beta_n,\eta_n)$ range over a compact set and
$p_{S'_{2,n}}(Y | X) \ge c > 0$, the denominator is uniformly lower bounded.
Consequently, there exists a constant $C'>0$ such that
\begin{align*}
\mathbb{E}_X \left[
d_{H}^2 \left(
p_{S'_{1,n}}(\cdot | X),
p_{S'_{2,n}}(\cdot | X)
\right)
\right]
\le
C'\bigl(
\exp(\tau_{1,n}) - \exp(\tau_{2,n})
\bigr)^2.
\end{align*}

Recalling the definition of the distance
$d_2 \left(
(\tau_{1,n},\beta_n,\eta_n),
(\tau_{2,n},\beta_n,\eta_n)
\right)
:= \bigl|\exp(\tau_{1,n}) - \exp(\tau_{2,n})\bigr|^2,$
we obtain
\begin{align*}
\frac{
\mathbb{E}_X \left[
d_{H}^2 \left(
p_{S'_{1,n}}(\cdot | X),
p_{S'_{2,n}}(\cdot | X)
\right)
\right]
}{
d_2^r(S'_{1,n},S'_{2,n})
}
&\le
C'\bigl|\exp(\tau_{1,n}) - \exp(\tau_{2,n})\bigr|^{2(1-r)}
 \longrightarrow  0,
\end{align*}
when $\exp(\tau_{1,n}) - \exp(\tau_{2,n}) \to 0$ and $r<1$.
Therefore,
${
\mathbb{E}_X \left[
d_{H}^2 \left(
p_{S'_{1,n}}(\cdot | X),
p_{S'_{2,n}}(\cdot | X)
\right)
\right]
}/{
d_2^r(S'_{1,n},S'_{2,n})
}
\longrightarrow 0,$
which completes the proof of part~(ii).

\end{proof}

\begin{proof}[Proof of Lemma \ref{prop:lower-distinguish} (ii) ]
Consider two sequences
$    S_{1,n} = (\tau_{n},\beta_{1,n},\eta_{2,n}),\ 
    S_{2,n} =(\tau_{n},\beta_{1,n},\eta_{2,n})$
which share the same value of $\tau_n$. By the definition of the multinomial logistic MoE model, we have
\begin{align*}
    p_{S_{l,n}}(y=s | x) 
    &= \frac{1}{1 + \exp(\beta_{l,n}^\top   + \tau_n)} 
    \frac{\exp(h(x,\eta_{0s}))}{\sum_{i=1}^K \exp(h(x,\eta_{0i}))}
    + \frac{\exp(\beta_{l,n}^\top   + \tau_n)}{1 + \exp(\beta_{l,n}^\top   + \tau_n)} 
    \frac{\exp(h(x,\eta_{l,ns}))}{\sum_{j=1}^K \exp(h(x,\eta_{l,nj}))},
\end{align*}
for $l = 1,2$, and denote $f_l(y=s | x)={\exp(h(x,\eta_{l,ns}^*))}/{\sum_{j=1}^K \exp(h(x,\eta_{l,nj}^*))}$.
Since $(\tau_n,\beta_{l,n})$ lie in a compact set, and softmax function are 
non-negative, the squared Hellinger distance satisfies
\begin{align*}
    \bbE_X&[d_{H}^2(p_{S_{1,n}}(\cdot|X), p_{S_{2,n}}(\cdot|X))]
    \leq C \int \left( \frac{p_{S_{1,n}}(Y  |  X) - p_{S_{2,n}}(Y  |  X)}{p_{S_{2,n}}(Y  |  X)} \right)^2 d(X, Y) \\
    &\leq C' \int \left[
    \frac{
        \exp(\beta_{1,n}^\top X) f_1(y=s | x)
        - \exp(\beta_{2,n}^\top X) f_2(y=s | x)
    }{
        \exp(\beta_{2,n}^\top X) f_2(y=s | x)
    }
    \right]^2 d(X, Y),
\end{align*}
for some constants $C, C'$ depending on the compactness bounds.

% Consider the Taylor expansion of the map $(\beta, \eta) \mapsto \exp(\beta^\top X) f(y=s | x)$
% at the point $(\beta_{2,n}, \eta_{2,n})$, expanded up to first order with integral remainder, that is refer to the equation \eqref{eq:u_taylor_first_order}. 
% It follows that 
% \begin{align*}
%     \frac{\bbE_X[h^2(p_{S_{1,n}}(\cdot|X), p_{S_{2,n}}(\cdot|X))]}{d_{1}^{2r}(S_{1,n}, S_{2,n})} \to 0.
% \end{align*}
% since $\tau_n$ lies in a compact set.

We consider the first-order Taylor expansion with integral remainder for 
$u(y=s | x;\beta,\eta)
:= \exp(\beta^\top x)  f(y=s | x;\eta)$
defined in the Appendix \ref{proof:not_equal}
% of the mapping
% $(\beta,\eta) \mapsto \exp(\beta^\top X)  f(y=s | x)$
around the point $(\beta_{2,n}, \eta_{2,n})$; see equation~\eqref{eq:u_taylor_first_order}. It then follows that
$    {
    \mathbb{E}_X \left[
    d_{H}^2 \left(
    p_{S_{1,n}}(\cdot | X),
    p_{S_{2,n}}(\cdot | X)
    \right)
    \right]
    }/{
    d_1^{2r}(S_{1,n}, S_{2,n})
    }
    \longrightarrow 0,$
where we have used the fact that $\tau_n$ ranges over a compact set.
This completes the proof of part~(i) of the lemma.

\end{proof}

%{\color{red} Only proofs of section 3}

\subsection{Heterogeneous-expert regime}
In this section, we provide the proofs for Theorem \ref{thm:not_equal} and Theorem \ref{thm:d1_minimax} in homogeneous-expert regime. 
\subsubsection{Proof of Theorem \ref{thm:not_equal}}
\label{proof:not_equal}

\begin{proof}
%     Let $\overline{G}=(\Bar{\beta},\Bar{\tau},\Bar{\eta})$ , following the analogous schema in \cite{Ho-Nguyen-EJS-16}, we only need to demonstrate that
%  \begin{align*}
%      \lim_{\varepsilon\to 0}\inf_{G,G_*}\left\{\frac{\bbE_X[d_V(p_G(\cdot|X),p_{ G_*}(\cdot|X))]}{D_1(G,G_*)}: 
%      D_1(G,\overline{G})\vee D_1(G_*,\overline{G})\leq\varepsilon\right\}>0.
%  \end{align*}
%  Assume by contrary that the above claim is not true. 
% Then, there exist two sequences $G_n=(\beta_n,\tau_n,\eta_{n})$ and $G_{*,n}=(\beta_n^*,\tau^*_n,\eta_{n}^*)$,
% such that when $n$ tends to infinity, we get
%  \begin{align*}
%      \begin{cases}
%         D_1(G_n,\overline{G}) \to 0 ,\\
%         D_1(G_{*,n},\overline{G}) \to 0 ,\\
%           \mathbb{E}_X \bigl[V\bigl(p_{G_n}( \cdot |  X), p_{G_{\ast,n}}( \cdot |  X)\bigr)\bigr]/D_1(G_n,G_{\ast,n}) \to 0.
%      \end{cases}
%  \end{align*}
%  In this proof, we will take into account only the most challenging setting of $(\betan, \eta_{n})$ and $(\betasn, \eta_{n}^*)$ when they converge to the same limit point $(\beta',\eta')$, where $(\beta',\eta')$ is not necessarily equal to $(\Bar{\beta},\Bar{\eta} )$ .
% Recall that
% $(\eta_{K}^\ast)=(0_q)$,
% we also set
% $(\eta_{nK}^{\ast })=(0_q)$.
Let $\overline{G} = (\bar{\beta}, \bar{\tau}, \bar{\eta})$. Following an argument analogous to that of \cite{Ho-Nguyen-EJS-16}, it suffices to show that
\begin{align*}
    \lim_{\varepsilon \to 0} 
    \inf_{G, G_*}
    \left\{
    \frac{\mathbb{E}_X \bigl[ d_V\bigl(p_G(\cdot \mid X), p_{G_*}(\cdot \mid X)\bigr) \bigr]}
         {D_2(G, G_*)}
    :
    D_2(G, \overline{G}) \vee D_2(G_*, \overline{G}) \le \varepsilon
    \right\}
    > 0 .
\end{align*}
Suppose, by contradiction, that the above claim does not hold. Then there exist two sequences
$G_n = (\beta_n, \tau_n, \eta_n)$ and $G_{*,n} = (\beta_n^*, \tau_n^*, \eta_n^*)$ such that, as $n \to \infty$,
\begin{align*}
    \begin{cases}
        D_2(G_n, \overline{G}) \to 0, \\
        D_2(G_{*,n}, \overline{G}) \to 0, \\
        \mathbb{E}_X \bigl[
        d_V\bigl(p_{G_n}(\cdot \mid X), p_{G_{*,n}}(\cdot \mid X)\bigr)
        \bigr] \big/ D_2(G_n, G_{*,n}) \to 0 .
    \end{cases}
\end{align*}
In this proof, we focus on the most challenging case in which
$(\beta_n, \eta_n)$ and $(\beta_n^*, \eta_n^*)$ converge to the same limit point $(\beta', \eta')$, where $(\beta', \eta')$ is not necessarily equal to $(\bar{\beta}, \bar{\eta})$.
Recall that $\eta_K^* = 0_q$; accordingly, we also set $\eta_{nK}^* = 0_q$.

\textbf{Step 1.}
Define
\[
T_n(s)
:=
\bigl[1+\exp(\beta_n^\top X+\tau_n)\bigr]
\Bigl[
p_{G_n}(Y=s \mid X)
-
p_{G_{*,n}}(Y=s \mid X)
\Bigr].
\]
This quantity can be decomposed as
\begin{align*}
T_n(s)
&=
\exp(\tau_n)
\Bigl[
u(Y=s \mid X;\beta_n,\eta_n)
-
u(Y=s \mid X;\beta_n^*,\eta_n^*)
\Bigr]
=: \mathcal{I}_{n}^{(1)}
\\
&\quad
-
\exp(\tau_n)
\Bigl[
\exp(\beta_n^\top X)
-
\exp(\beta_n^{*\top} X)
\Bigr]
p_{G_{*,n}}(Y=s \mid X)
=: \mathcal{I}_{n}^{(2)}
\\
&\quad
+
\Bigl[
\exp(\tau_n)
-
\exp(\tau_n^*)
\Bigr]
\exp(\beta_n^{*\top} X)
\Bigl[
f(Y=s \mid X;\eta_n^*)
-
p_{G_{*,n}}(Y=s \mid X)
\Bigr].
\end{align*}
Here,
\[
u(Y=s \mid X;\beta,\eta)
:= \exp(\beta^\top X)  f(Y=s \mid X;\eta),
\qquad s \in [K].
\]

We further decompose the terms $\mathcal{I}_{n}^{(1)}$ and $\mathcal{I}_{n}^{(2)}$.
Let $h_i := h(X,\eta_i)$ for all $i \in [K]$. Recall in the proof~\ref{app_proof: d2_loss}, we denote
\begin{align*}
f(Y=s \mid X;\eta)
&=
\frac{\exp(h_s)}
{\sum_{i=1}^{K} \exp(h_i)}
=
\frac{\exp\bigl(h(X,\eta_s)\bigr)}
{\sum_{i=1}^{K} \exp\bigl(h(X,\eta_i)\bigr)}.
\end{align*}
\noindent
% By means of the first‑order Taylor expansion, $\Ione_{n}$ can be written as
% \begin{align*}
% \Ione_{n}
% &=
%     \exp \bigl(\tau_{n}\bigr)
%     \sum_{|\alpha|=1}
%       \frac{1}{\alpha!} 
%       \bigl(\beta_{n}-\beta_{n}^*\bigr)^{\alpha_{1}}
%       \prod_{i=1}^{K-1}
%         \bigl(\eta_{ni} - \eta_{ni}^*\bigr)^{\alpha_{2i}}
%       \frac{
%         \partial^{|\alpha|}
%           u \bigl(Y=s| X; \beta_{n}^{\ast},\eta_{n}^{\ast}\bigr)
%       }{
%         \partial(\beta_{n})^{\alpha_{1}} 
%         \displaystyle\prod_{i=1}^{K-1}
%           \partial(\eta_{ni})^{\alpha_{2i}} 
%       }
%        + 
%       R_{1}(X,Y).
% \end{align*}
% Here we denote that 
% $\alpha := (\alpha_1, \alpha_{21}, \dots, \alpha_{2(K-1)})$,  
% where $\alpha_1 \in \mathbb{N}^d$ and $\alpha_{2i} \in \mathbb{N}^q$ for any $i \in [K - 1]$.
% Additionally, $R_1(X, Y)$ is a Taylor remainder such that $R_1(X, Y)/D_1(G_n, G_{*,n}) \to 0$ as $n \to \infty$.
By a first-order Taylor expansion, the term $\mathcal{I}_{n}^{(1)}$ can be expressed as
\begin{align*}
\mathcal{I}_{n}^{(1)}
&=
    \exp \bigl(\tau_{n}\bigr)
    \sum_{|\alpha|=1}
      \frac{1}{\alpha!} 
      \bigl(\beta_{n}-\beta_{n}^*\bigr)^{\alpha_{1}}
      \prod_{i=1}^{K}
        \bigl(\eta_{ni} - \eta_{ni}^*\bigr)^{\alpha_{2i}}
      \frac{
        \partial^{|\alpha|}
          u \bigl(Y=s|X; \beta_{n}^{\ast},\eta_{n}^{\ast}\bigr)
      }{
        \partial(\beta_{n})^{\alpha_{1}} 
        \displaystyle\prod_{i=1}^{K}
          \partial(\eta_{ni})^{\alpha_{2i}} 
      }
       + 
      R_{1}(X,Y).
\end{align*}
Here, $\alpha := (\alpha_1, \alpha_{21}, \ldots, \alpha_{2K})$ is a multi-index, where
$\alpha_1 \in \mathbb{N}^d$ and $\alpha_{2i} \in \mathbb{N}^q$ for each $i \in [K]$. 
The remainder term $R_1(X,Y)$ satisfies  $R_1(X, Y)/D_2(G_n, G_{*,n}) \to 0$ as $n \to \infty$.

From the formulation of $u$, we have
\begin{align}
\label{eq:u_taylor_first_order}
&u(Y=s \mid X;\beta_n,\eta_n) - u(Y=s \mid X;\beta_n^\ast,\eta_n^\ast)
=
\exp \bigl((\beta_n^\ast)^\top X\bigr)\times
\\&
\Bigg[
f(Y=s \mid X;\eta_n^\ast) X^\top(\beta_n-\beta_n^\ast)
 + 
\sum_{i=1}^{K}\sum_{j=1}^{q}
\bigl(\eta_{ni,j}-\eta_{ni,j}^\ast\bigr) 
\frac{\partial h}{\partial \eta_{i,j}}(X,\eta_{ni}^\ast) 
\frac{\partial f}{\partial h_i}(Y=s \mid X;\eta_n^\ast)
\Bigg]
 + R_u(X,Y).
\end{align}
Then we will have that
\begin{align*}
\mathcal{I}_{n}^{(1)} &= 
\exp(\tau_{n}) 
\sum_{|\alpha|=1} \frac{1}{\alpha!} 
(\beta_{n}-\beta_{n}^\ast)^{\alpha_1} 
\prod_{i=1}^{K} 
(\eta_{ni}-\eta_{ni}^\ast)^{\alpha_{2i}}
\\
&\quad 
\times
\exp((\beta_{n}^*)^\top X)
\times 
X^{\alpha_1} 
\times
\frac{
    \partial^{\sum_{i=1}^{K} |\alpha_{2i}|} f
}{
    \partial h_1^{|\alpha_{21}|}
    \cdots
    \partial h_{K}^{|\alpha_{2K}|}
}
(Y = s  |  X; \eta_n^*) 
      \prod_{i=1}^{K}\dfrac{\partial h(X,\eta^*_{ni})}{\partial (\eta_{ni})^{\alpha_{2i}}}
+ R_1(X, Y).
\end{align*}

Let $\varrho_1 = \alpha_1 \in \mathbb{N}^d$, 
$\varrho_2 = (\varrho_{2i})_{i\in[K]} := (\alpha_{2ij})_{i\in[K],j\in[q]} \in \mathbb{N}^{K\times q}$ and
\begin{align*}
\mathcal{I}_{\varrho_1, \varrho_2} := \left\{ \alpha = (\alpha_1, \alpha_{21}, \dots, \alpha_{2K}) : 
\alpha_1 = \varrho_1,\ (\alpha_{2i})_{1\leq i \leq K} = \varrho_{2} \right\},
\end{align*}
we can rewrite $\mathcal{I}_{n}^{(1)}$ as
\begin{align*}
\mathcal{I}_{n}^{(1)} 
&= \exp(\tau_n)
   \sum_{|\varrho_1|+|\varrho_2|=1} 
   \sum_{\alpha \in \mathcal{I}_{\varrho_1,\varrho_2}} 
   \frac{1}{\alpha!} 
   (\beta_n-\beta_n^*)^{\alpha_1} 
   \prod_{i=1}^{K} 
   (\eta_{ni} - \eta_{ni}^*)^{\alpha_{2i}} 
\\
&\quad \times 
   X^{\varrho_1} \exp((\beta_{n}^*)^\top X) 
   \frac{\partial^{|\varrho_2|} f}{
      \partial h_1^{|\varrho_{21}|} \cdots 
      \partial h_{K}^{|\varrho_{2K}|}
   }(Y = s  |  X; \eta_n^*)
    \prod_{i=1}^{K}\dfrac{\partial h(X,\eta^*_{ni})}{\partial (\eta_{ni})^{\rho_{2i}}}
   + R_1(X,Y).
\end{align*}
Similarly, the term $\mathcal{I}_{n}^{(2)}$ admits the expansion
\begin{align*}
   \mathcal{I}_{n}^{(2)}=&  
   \exp(\tau_n)
  \sum_{|\gamma|=1} 
  \frac{1}{\gamma!} 
  (\beta_n-\beta_n^*)^\gamma 
  \times X^\gamma \exp \big( (\beta_{n}^*)^\top X \big) 
  p_{G_{*,n}}(Y = s | X) 
  + R_2(X,Y).
\end{align*}
Here 
${R_2(Y|X)}/{D_1(\Gn,\Gsn)}\to0$ 
as $n\to\infty$,
where $R_2(X,Y)$ is Taylor remainder.
Putting the above decompositions together, we obtain that 
\begin{align*}
T_n(s) 
&= 
   \sum_{\substack{|\varrho_1| + |\varrho_2| = 0}}^{1} 
   U_{\varrho_1,\varrho_2}^n \times 
   X^{\varrho_1} \exp \big( (\beta_{n}^*)^\top X \big) 
   \frac{\partial^{|\varrho_2|} f}{
      \partial h_1^{|\varrho_{21}|} \cdots 
      \partial h_{K}^{|\varrho_{2K}|}
   }(Y = s  |  X; \eta_n^*)
    \prod_{i=1}^{K}\dfrac{\partial h(X,\eta^*_{ni})}{\partial (\eta_{ni})^{\rho_{2i}}}
   \\
&\quad +  
   \sum_{|\gamma|=0}^{1} 
   W_\gamma^n  \times 
   X^\gamma \exp \big( (\beta_{n}^*)^\top X \big) 
  p_{G_{*,n}}(Y = s  |  X) 
   + R(X,Y).
\end{align*}
where $R(X,Y)$ is the sum of Taylor remainders such that $R(X,Y)/D_1(X,Y) \to 0$ as $n\to \infty$ and 
\begin{align*}
U_{\varrho_1,\varrho_2}^n
&= 
\begin{cases}
\displaystyle 
\exp(\tau_n)  \sum_{\alpha \in \mathcal{I}_{\varrho_1,\varrho_2}} 
\frac{1}{\alpha!} 
(\beta_n - \beta_n^*)^{\alpha_1} 
\prod_{i=1}^{K} 
(\eta_{ni}-\eta_{ni}^*)^{\alpha_{2i}} , 
& (\varrho_1, \varrho_2) \neq (0_d, 0_{K}), \\[1.5ex]
\displaystyle 
\exp(\tau_n)
-
\exp(\tau_n^*), 
& (\varrho_1, \varrho_2) = (0_d, 0_{K}),
\end{cases}
\\[2ex]
W_\gamma^n(j) 
&= 
\begin{cases}
\displaystyle 
- \exp(\tau_n) \frac{1}{\gamma!} 
(\beta_n - \beta_n^*)^\gamma, 
& |\gamma| \neq 0_d, \\[1.5ex]
\displaystyle 
- \exp(\tau_n)
+
\exp(\tau_n^*), 
& |\gamma| = 0_d.
\end{cases}
\end{align*}

\textbf{Step 2.} 
In this step, we will use a contradiction argument to demonstrate that not all the coefficients in the set 
\begin{align}
\label{pfeq:coefficient_set_one}
\mathcal{S}_1=
\left\{ 
\frac{U_{\varrho_1,\varrho_2}^n}{D_{2n}},
\frac{W^n_{\gamma}}{D_{2n}}
:
0 \leq |\varrho_1| + |\varrho_2| \leq 1,
0 \leq |\gamma| \leq 1
\right\}
\end{align}
vanish as $ n \to \infty $ where
$D_{2n}:=\dtwo$. 
Specifically, suppose that all these coefficients converge to zero, when $n\to\infty$, then by taking the summation 
of $U_{\varrho_1,\varrho_2}^n /D_{2n}$, $\varrho_1 \in \{e_1, e_2, \ldots, e_d\}$ and $\varrho_2 = 0_{K}$, 
where $e_i := (0,\ldots,0, \underbrace{1}_{i\text{-th}}, 0,\ldots,0) \in \mathbb{R}^d$, 
we achieve that
\begin{align}
\frac{1}{D_{2n}} \cdot 
\exp(\tau_n) \cdot \| \beta_n-\beta_n^* \|_1  \to  0. 
\end{align}
Similarly, for $\varrho_1 = 0_d$ and $\varrho_2 \in \{e'_1, e'_2, \ldots, e'_{K}\}$, 
where $e'_{ij} := (0,\ldots,0, \underbrace{e''_j}_{i\text{-th}}, 0,\ldots,0) \in \mathbb{R}^{q\times K}$, 
$e''_j := (0,\ldots,0, \underbrace{1}_{j\text{-th}}, 0,\ldots,0) \in \mathbb{R}^q$,
we have
\begin{align}
\frac{1}{D_{2n}} 
\exp(\tau_n)\cdot 
\sum_{i=1}^{K} 
\|\eta_{ni}- \eta_{ni}^* \|_1  \to  0.  
\end{align}
% Combine the limits in equations we just analyses, we get
% \begin{align*}
% \frac{1}{D_{1n}} \cdot 
% \exp(\tau_n) \cdot 
% \Big[ \|\beta_n-\beta_n^*\|_1 
% + \sum_{i=1}^{K-1} \big( |\Delta a_{ni} | + \|\Delta b_{ni} \|_1 \big) \Big]  \to  0.
% \end{align*}
Combine the limits and recall the topological equivalence between 1-norm and 2-norm, the above limit is equivalent to
\begin{align}
\label{eq:pf_d1_sum_1}
\frac{1}{D_{2n}} \cdot  
\exp(\tau_n) \cdot 
\left[ \|\beta_n-\beta_n^*\| 
+ \sum_{i=1}^{K} (\|\eta_{ni}-\eta_{ni}^* \|) \right]  \to  0.  
\end{align}
Given that our parameter lies in a compact set, there exists a positive constant $C$ such that $|\exp(\tau_n^*)/\exp(\tau_n)|\leq C$, thus we will have that
\begin{align}
\label{eq:pf_d1_sum_2}
\frac{1}{D_{2n}} \cdot  
\exp(\tau_n^*) \cdot 
\left[ \|\beta_n-\beta_n^*\| 
+ \sum_{i=1}^{K} (\|\eta_{ni}-\eta_{ni}^* \|) \right]  \to  0.  
\end{align}
Note that
\begin{align}
\label{eq:pf_d1_sum_3}
 \frac{\big| U_{0_d, 0_{K}}^n \big|}{D_{2n}} 
= \frac{1}{D_{2n}} \cdot 
\left| \exp(\tau_n) - \exp(\tau_n^*) \right| 
 \to  0. 
\end{align}

By taking the sum of limits in equations \eqref{eq:pf_d1_sum_1} to \eqref{eq:pf_d1_sum_3}, 
we deduce that 
\[
1 = {D_{2n}}/D_{2n}  \to  0 \quad \text{as } n \to \infty,
\]
which is a contradiction. Thus, at least one among the limits of 
$U_{\varrho_1,\varrho_2}^n/D_{2n}$ and $W_\gamma^n(j)/D_{2n}$ 
is non-zero.

\textbf{Step 3.} Finally, we will leverage Fatou’s lemma to point out a contradiction to the result in Step~2.

Let us denote by $m_n$ the maximum of the absolute values of 
$U_{\varrho_1,\varrho_2}^n /D_{2n}$ and $W_\gamma^n /D_{2n}$ 
 , 
$0 \le |\varrho_1| + |\varrho_2| \le 1  $ and 
$0 \le |\gamma| \le 1  $. 
Then, it follows from Fatou’s lemma that
\begin{align*}
0 
&= \lim_{n \to \infty} 
\frac{\mathbb{E}_X \big[ 2d_V(p_{G_n}(\cdot  |  X), p_{G_{*,n}}(\cdot  |  X)) \big]}{m_n D_{2n}} \\
&\ge \int \sum_{s=1}^K 
\liminf_{n \to \infty} 
\frac{\big| p_{G_n}(Y=s  |  X) - p_{G_{*,n}}(Y=s  |  X) \big|}{m_n D_{2n}} 
  dX  \ge  0.
\end{align*}

As a result, we get that 
$\lvert p_{G_n}(Y=s  |  X) - p_{G_{*,n}}(Y=s  |  X) \rvert / [m_n D_{2n}]$ 
converges to zero, which implies that 
$T_n(s)/[m_n D_{2n}] \to 0$ as $n \to \infty$ for any $s \in [K]$ and almost surely $X$.  
Let $U_{\varrho_1,\varrho_2}^n /[m_n D_{2n}] \to \tau_{\varrho_1,\varrho_2} $ and 
$W_\gamma^n  \to \eta_\gamma $ as $n$ approaches infinity, then the previous result indicates that
\begin{align}
\label{pfeq:case1_sum}
&  \sum_{|\varrho_1|+|\varrho_2|=0}^{1} 
\tau_{\varrho_1,\varrho_2}  \times X^{\varrho_1} \exp \big( (\beta_{n}^*)^\top X \big) 
% \frac{\partial^{|\varrho_2|} f}{\partial h_1^{\varrho_{21}} \cdots \partial h_{K-1}^{\varrho_{2(K-1)}}}(Y=s  |  X; a_n^*, b_n^*) 
   \frac{\partial^{|\varrho_2|} f}{
      \partial h_1^{|\varrho_{21}|} \cdots 
      \partial h_{K}^{|\varrho_{2K}|}
   }(Y = s  |  X; \eta_n^*)
        \prod_{i=1}^{K}\dfrac{\partial h(X,\eta^*_{ni})}{\partial (\eta_{ni})^{\rho_{2i}}}
\notag \\
&\quad +   \sum_{|\gamma|=0}^{1 } 
\eta_\gamma  \times X^\gamma \exp \big( (\beta_{n}^*)^\top X \big) p_{G_{*,n}}(Y=s  |  X)  =  0, 
\quad \text{for any } s \in [K], \text{ a.s. } X. 
\end{align}

Here, at least one among $\tau_{\varrho_1,\varrho_2}(j)$ and $\eta_\gamma(j)$ is different from zero.  
Assume the set
\begin{align}
\label{pfeq:case1_set}
\mathcal{F} := &\Bigg\{ 
X^{\varrho_1} \exp \big( (\beta_{n}^*)^\top X \big) 
% \frac{\partial^{|\varrho_2|} f}{\partial h_1^{\varrho_{21}} \cdots \partial h_{K-1}^{\varrho_{2(K-1)}}}(Y=s  |  X; a_n^*, b_n^*)
   \frac{\partial^{|\varrho_2|} f}{
      \partial h_1^{|\varrho_{21}|} \cdots 
      \partial h_{K}^{|\varrho_{2K}|}
   }(Y = s  |  X; \eta_n^*)
        \prod_{i=1}^{K}\dfrac{\partial h(X,\eta^*_{ni})}{\partial (\eta_{ni})^{\rho_{2i}}}
:  
 0 \le |\varrho_1|+|\varrho_2| \le 1  \Bigg\} \notag \\
&   \cup \Big\{ 
X^\gamma \exp \big( (\beta_{n}^*)^\top X \big) p_{G_{*,n}}(Y=s  |  X):  
 0 \le |\gamma| \le 1  \Big\}
\end{align}
is linearly independent, we deduce that $\tau_{\varrho_1,\varrho_2}  = \eta_\gamma  = 0$ for any 
$ 0 \le |\varrho_1|+|\varrho_2| \le 1 ,   
0 \le |\gamma| \le 1 ,$ which is a contradiction.

Thus, it suffices to show that $\mathcal{F}$ is a linearly independent set to attain the conclusion, which is proved in Proposition \ref{prop:independence_case_1}. 
\end{proof}

{
% \color{orange}
\begin{proposition}

\label{prop:independence_case_1}
    Suppose that the pretrained function $h_0$, training function $h$ satisfying $h_0 \neq h$, and in addition, the function $h$ satisfying: the equation 
    \begin{equation*}
        \sum_{i=1}^K\sum_{|\varrho|=1} \alpha_{\varrho,i} \dfrac{\partial h(X,\eta^*_{i})}{\partial(\eta_{i})^{\alpha_{\varrho}}} = 0.
    \end{equation*}
    implies that $\alpha_{\varrho,i} = 0$ for each $\varrho$ and $i$.
    Then, the family \eqref{pfeq:case1_set} is linear independent. 

\end{proposition}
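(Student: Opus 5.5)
The plan is to argue directly from a vanishing linear combination of the elements of $\mathcal{F}$ in \eqref{pfeq:case1_set}, evaluated at the limit point, and to peel off the coefficients group by group. Suppose that for some real coefficients $\{a_{\varrho_1}\}$ (for $\varrho_2=0$), $\{c_{iu}\}$ (for $\varrho_1=0$, $|\varrho_2|=1$) and $\{b_\gamma\}$ we have, for every $s\in[K]$ and almost every $X$,
\begin{align*}
\exp((\beta^*)^\top X)\Big[(a_0+a^\top X)f_s + \sum_{i=1}^K g_i(X)\,f_s(\delta_{si}-f_i) + (b_0+b^\top X)\,p_{G_*}(Y=s\mid X)\Big]=0,
\end{align*}
where $f_s:=f(Y=s\mid X;\eta^*)$ and $g_i(X):=\sum_{u=1}^q c_{iu}\,\partial h(X,\eta_i^*)/\partial\eta_i^{(u)}$. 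Here I use the softmax identity $\partial f_s/\partial h_i=f_s(\delta_{si}-f_i)$. Since $\exp((\beta^*)^\top X)>0$, we may drop this factor.

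\emph{Step 1: summing over classes.} Sum the identity over $s\in[K]$. Since $\sum_s f_s=\sum_s p_{G_*}(Y=s\mid X)=1$ and $\sum_s f_s(\delta_{si}-f_i)=0$, we obtain $(a_0+b_0)+(a+b)^\top X=0$ for a.e.\ $X$. Because $\mathcal{X}$ has nonempty interior, the functions $1,X^{(1)},\ldots,X^{(d)}$ are linearly independent, so $a=-b$ and $a_0=-b_0$.

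Substituting back, and writing $p_{G_*}(Y=s\mid X)=\pi_0(X)f_{0,s}+\pi_1(X)f_s$ with $\pi_0=1/(1+\exp((\beta^*)^\top X+\tau^*))>0$, the identity becomes
\begin{align*}
\ell(X)\,\pi_0(X)\,\big(f_{0,s}-f_s\big)+f_s\Big(g_s(X)-\sum_{i=1}^K f_i\,g_i(X)\Big)=0,\qquad \ell(X):=b_0+b^\top X .
\end{align*}

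\emph{Step 2: showing $\ell\equiv 0$ (main obstacle).} Suppose $\ell$ is a nonzero affine function. Then on the complement of its zero hyperplane we get
\begin{align*}
\frac{f_{0,s}}{f_s}-1=-\frac{g_s-\bar g}{\ell\,\pi_0},\qquad \bar g:=\sum_i f_i g_i .
\end{align*}
This says that the pretrained softmax $f_0$ is an exact first-order logit perturbation of the adapter softmax $f$, along directions spanned by $\partial h/\partial\eta$ and modulated by the rational factor $1/(\ell\pi_0)$.

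The heterogeneity hypothesis $h_0\neq h$ must be used precisely to exclude this. I would formalize $h_0\neq h$ as the requirement that $f_0(\cdot\mid X,\eta_0)$ does not lie in the span of $\{f,\ \partial f/\partial h_i\cdot\partial h/\partial\eta_i^{(u)}\}$ with such coefficients. For analytic experts (e.g.\ linear versus $\tanh$ or GELU) this can be checked by comparing growth or analytic structure along rays of $X$. This is the delicate point, and it may require stating that nondegeneracy explicitly as part of what "$h_0\neq h$" means. Granting it, we conclude $\ell\equiv0$, so $b_0=0$, $b=0$, and hence $a_0=0$, $a=0$.

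\emph{Step 3: the expert-derivative coefficients.} With $\ell\equiv 0$ we have $f_s(g_s-\bar g)=0$, and since $f_s>0$ this gives $g_s=\bar g$ for all $s$. All the $g_i$ therefore coincide. Under the normalization $\eta^*_K=0_q$ fixed by the parametrization (so the $K$-th block carries no free derivative terms, i.e.\ $g_K\equiv0$), every $g_i$ vanishes identically. The hypothesis of the proposition on $\partial h/\partial\eta$ then forces $c_{iu}=0$ for all $i,u$.

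Thus all coefficients vanish, and $\mathcal{F}$ is linearly independent.
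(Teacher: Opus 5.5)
The genuine gap is Step 2. You assume $\ell\equiv0$ rather than derive it, and it cannot be derived from the hypotheses as stated. Steps 1 and 3 are fine. Summing over $s$ does force $a_0=-b_0$ and $a=-b$, and the reduction to $\ell\,\pi_0\,(f_{0,s}-f_s)+f_s(g_s-\bar g)=0$ is right. In Step 3 you do not need the normalization $\eta_K^*=0_q$, since the proposition's hypothesis and family run over all $i\in[K]$. Indeed, $g_i-g_j\equiv0$ is itself a vanishing combination of the functions $\partial h(X,\eta_k^*)/\partial\eta_k^{(u)}$, so the stated joint-independence hypothesis gives $c_{iu}=c_{ju}=0$ directly.

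Your reduced identity with $\ell\not\equiv0$ holds if and only if $g_s=-\ell\,\pi_0\,(f_{0,s}/f_s-1)+c(X)$ for all $s$ and some common function $c$. The condition $h_0\neq h$, even read as $f_0(\cdot\mid X,\eta_0)\neq f(\cdot\mid X,\eta^*)$, does not rule this out. Here is a counterexample:
\begin{itemize}
\item Take $\ell\equiv1$, small nonzero $(c_{iu})$, and pretrained logits $h_0(x,\eta_{0s}):=\log\bigl(f_s(x)\,[1-(g_s(x)-\bar g(x))/\pi_0(x)]\bigr)$.
\item The bracket is positive for small $c$, and the resulting probabilities sum to one.
\item $f_0\neq f$, because the joint-independence hypothesis prevents all $g_s$ from coinciding.
\item Yet $e^{(\beta^*)^\top X}\bigl[p_{G_*}(Y=s\mid X)-f_s+\sum_{i,u}c_{iu}\,\partial h(X,\eta_i^*)/\partial\eta_i^{(u)}\,\partial f_s/\partial h_i\bigr]\equiv0$, which is a nontrivial linear relation in $\mathcal{F}$.
\end{itemize}
So the literal statement is false, and an explicit nondegeneracy hypothesis of exactly the kind you describe is genuinely needed.

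The paper's proof does not supply it either. It splits the relation along the monomials $X^{\varrho_1}$, claiming each bracket vanishes because $\{X^{\varrho_1}e^{(\beta^*)^\top X}\}$ is linearly independent. That step is invalid, because the brackets depend on $X$ through $f$, $p_{G_*}$ and $\partial h/\partial\eta$. The split is what lets the paper use only $f_0\neq f$ for the linear-in-$X$ part. For the constant part it appeals to \cite{nguyen2024general} for an independence claim that fails in the example above, where $\ell$ is constant.

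Your argument isolates the real condition more honestly than the paper does. To turn it into a proof, add as a hypothesis that there are no affine $\ell\not\equiv0$, coefficients $(c_{iu})$ and function $c$ satisfying the relation above. Then verify that hypothesis for the concrete pairs you have in mind, such as linear versus $\tanh$.
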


\begin{proof}
Suppose that there exists coefficients $\tau_{\varrho_1,\varrho_2}$ and $\eta_{\omega}$ ($0 \leq |\varrho_1| + |\varrho_2| \leq 1$, $0 \leq |\omega| \leq 1$) such that 
     \begin{align*}
\sum_{|\varrho_1| = 0}^1\left[ \eta_{\varrho_1}p_{G_{*}}(Y=s  |  X)   + \sum_{ 0 \leq |\varrho_2| \leq 1-|\varrho_1|}\tau_{\varrho_1,\varrho_2}\frac{\partial^{|\varrho_2|} f}{
      \partial h_1^{|\varrho_{21}|} \cdots 
      \partial h_{K}^{|\varrho_{2K}|}
   }(Y = s  |  X; \eta^*)
        \prod_{i=1}^{K}\dfrac{\partial h(X,\eta^*_{i})}{\partial (\eta_{i})^{\rho_{2i}}}\right] \times X^{\varrho_1}\exp\left((\betas)^\top X\right) = 0.
\end{align*} 
It is obvious that the set $\{X^{\varrho_1}\exp\left((\betas)^\top X\right),\ 0\leq |\varrho_1| \leq 1\}$ is linearly independent, thus, we have for $0 \leq |\varrho_1| \leq 1$:
\begin{equation*}
    \eta_{\varrho_1}p_{G_{*}}(Y=s  |  X)   + \sum_{ 0 \leq |\varrho_2| \leq 1-|\varrho_1|}\tau_{\varrho_1,\varrho_2}\frac{\partial^{|\varrho_2|} f}{
      \partial h_1^{|\varrho_{21}|} \cdots 
      \partial h_{K}^{|\varrho_{2K}|}
   }(Y = s  |  X; \eta^*)
        \prod_{i=1}^{K}\dfrac{\partial h(X,\eta^*_{i})}{\partial (\eta_{i})^{\rho_{2i}}} = 0. 
\end{equation*}
We consider situations based on $|\varrho_1|$:

$\bullet$ When $|\varrho_1| = 1$, we have 
\begin{equation*}
    \eta_{\varrho_1}p_{G_{*}}(Y=s|X) + \tau_{\varrho_1,0}f(Y=s|X;\eta^*) = 0.
\end{equation*}
By substituting the equation above for $s \in [K]$ and summing them up, we achieve $\eta_{\varrho_1} + \tau_{\varrho_1,\varrho_2} = 0$. As a result, we have 
\begin{equation*}
\eta_{\varrho_1}\left(p_{G_{*}}(Y=s  |  X) - f(Y=s|X;\eta^*)\right) = 0. 
\end{equation*}
If $\eta_{\varrho_1} \neq 0$, then $p_{G_{*,n}}(Y=s  |  X)  = f(Y=s|X;\eta^*)$, thus $\dfrac{\exp(h_{0}(X,\eta_{0s}))}{\sum_{i=1}^K\exp(h_{0}(X,\eta_{0i}))} = \dfrac{\exp(h(X,\eta^*_{s}))}{\sum_{i=1}^K\exp(h(X,\eta^*_{i}))} $. This equality cannot happens when $h \neq h_0$, as a result, $\eta_{\varrho_1} = 0$. 

$\bullet$ When $|\varrho_1| = 0$, we have 
\begin{equation*}
    \eta_0p_{G_{*}}(Y=s|X) + \tau_{0,0}f(Y=s|X;\eta^*) + \sum_{|\varrho_2| = 1} \tau_{0,\varrho_2} \frac{\partial^{|\varrho_2|} f}{
      \partial h_1^{|\varrho_{21}|} \cdots 
      \partial h_{K}^{|\varrho_{2K}|}
   }(Y = s  |  X; \eta_n^*)
        \prod_{i=1}^{K}\dfrac{\partial h(X,\eta^*_{i})}{\partial (\eta_{i})^{\rho_{2i}}} = 0. 
\end{equation*}
In other words, 
\begin{align*}
    &\eta_0p_{G_{*}}(Y=s|X) + \tau_{0,0}f(Y=s|X;\eta^*) \\
    +&\sum_{i=1}^{K-1} \left(\sum_{|\varrho_{2i}| = |\varrho_2| = 1}\tau_{0,\varrho_2} \prod_{i=1}^{K}\dfrac{\partial h(X,\eta^*_{i})}{\partial (\eta_{i})^{\rho_{2i}}} -\sum_{|\varrho_{2K}| = |\varrho_2| = 1}\tau_{0,\varrho_2} \prod_{i=1}^{K}\dfrac{\partial h(X,\eta^*_{i})}{\partial (\eta_{i})^{\rho_{2i}}}\right)\frac{\partial f}{
      \partial h_i
   }(Y = s  |  X; \eta^*) = 0. 
\end{align*}
Using the similar argument as in proof of Theorem 3.1, \cite{nguyen2024general}, the following set is linearly independent: 
\begin{equation*}
    \left\{p_{G_{*}}(Y=s|X), \dfrac{\partial^{|\varrho_2|}}{\partial h_1^{|\varrho_{21}|}\cdots\partial h_{K-1}^{|\varrho_{2(K-1)}|}},\ 0\leq |\varrho_2| \leq 1\right\}. 
\end{equation*}
As a result, we achieve that 
\begin{equation}
    \eta_0 = \tau_{0,0} = 0, \quad \sum_{|\varrho_2| = |\varrho_{2u}|=1} \tau_{0,\varrho_2} \prod_{i=1}^{K}\dfrac{\partial h(X,\eta^*_{u})}{\partial(\eta_{u})^{\varrho_{2u}}} = \sum_{|\varrho_2| = |\varrho_{2v}|=1} \tau_{0,\varrho_2} \prod_{i=1}^{K}\dfrac{\partial h(X,\eta^*_{v})}{\partial(\eta_{v})^{\varrho_{2v}}}, \ 1\leq u < v \leq q. 
\end{equation}
From the hypothesis, we achieve that $\tau_{0,\varrho_{2i}} = 0$ for all $i$. This completes our proof of independency. 
\end{proof}
}

\subsubsection{Proof of Theorem \ref{thm:d1_minimax}}
\label{proof:d1_minimax}

In what follows, we present a proof of Theorem~\ref{thm:d2_minimax} for the non-overlapping regime. 
\begin{proof}[Proof of Theorem \ref{thm:d2_minimax}]
The proof proceeds along the same lines as those developed in the proof of Theorem \ref{thm:d1_minimax}. Specifically, let 
$S_1 = (\tau_1, \beta_1,\eta_1)$, $S_2= (\tau_2,\beta_2,\eta_2)$ where $\eta_i=\{\eta_{i1},\eta_{i2},\cdots, \eta_{iN} \}$, $i\in\{1,2\}$.
% \begin{align*}
% \begin{cases}
%     d_{\prime}(S_1,S_2)
%     =  \Vert \Delta \eta_1, \Delta\nu_1 \Vert ^2
%     | \exp(\tau_1) -\exp(\tau_2) |
%     ,
%     \\
%     d_{\prime\prime}(S_1,S_2)
%     =\exp(\tau_1)\Vert \Delta \eta_1, \Delta\nu_1 \Vert
%     \Vert (\beta_1,\eta_1,\nu_1) - (\beta_2,\eta_2,\nu_2) \Vert
%     .
% \end{cases}
% \end{align*}
\begin{align*}
\begin{cases}
    d_{\prime}(G_1,G_2):=
    \exp(\tau_1)
    \left(
        \|\beta_1 - \beta_2\|
        +
        \sum_{i=1}^{K}
        \|\eta_{1i} - \eta_{2i}\|
    \right)
    ,
    \\
    d_{\prime\prime}(S_1,S_2)
    =\bigl|
        \exp(\tau_1) - \exp(\tau_2)
    \bigr|^2
    .
\end{cases}
\end{align*}
% \begin{align*}
%     d_1(G_1,G_2)
%     &:=
%     \exp(\tau_1)
%     \left(
%         \|\beta_1 - \beta_2\|
%         +
%         \sum_{i=1}^{K-1}
%         \|\eta_{1i} - \eta_{2i}\|
%     \right),
%     \\
%     d_2(G_1,G_2)
%     &:=
%     \bigl|
%         \exp(\tau_1) - \exp(\tau_2)
%     \bigr|^2 .
% \end{align*}
It is immediate that the metrics $d_{'}$ and $d_{''}$ satisfy the weak triangle inequality. Following the same schema as in Lemma \ref{prop:lower-distinguish}, we can demonstrate two subsequent results for any $r > 1$: 
\begin{itemize}
    % \item [(i)] Two sequences can be found  
    % \begin{align*}
    % \begin{cases}
    %     S_{1,n}=(\tau_n,\beta_{1,n},\eta_{n},\nu_{n})\in \Xi(l_n),\\
    %     S_{2,n}=(\tau_n,\beta_{2,n},\eta_{n},\nu_{n})\in \Xi(l_n),
    % \end{cases}
    % \end{align*}
    % such that $d_{\prime}(S_{1,n},S_{2,n}) \to 0$ and $h(p_{S_{1,n}}, p_{S_{2,n}})/d_{\prime}^r(S_{1,n},S_{2,n})\to 0$ as $n \to \infty$. 

    \item [(i)] Two sequences   
$        S_{1,n}=(\tau_{1,n},\beta_{n},\eta_{n})\in \Xi(l_n),$ and $
        S_{2,n}=(\tau_{1,n},\beta_{n},\eta_{n})\in \Xi(l_n),$
        can be found
    such that $d_{\prime}(S_{1,n},S_{2,n}) \to 0$ and $\bbE_X[h(p_{S_{1,n}}(\cdot|X), p_{S_{2,n}}(\cdot|X))]/d_{\prime}^r(S_{1,n},S_{2,n})\to 0$ as $n \to \infty$.

    \item [(ii)] Two sequences $S'_{1,n}=(\tau_n,\beta_{1,n},\eta_{1,n})\in \Xi(l_n),$ and $
        S'_{2,n}=(\tau_n,\beta_{2,n},\eta_{2,n})\in \Xi(l_n),$ can be found  
    such that $d_{\prime\prime }(S_{1,n},S_{2,n}) \to 0$ and $\bbE_X[h(p_{S'_{1,n}}(\cdot|X), p_{S'_{2,n}}(\cdot|X))]/d_{\prime\prime }^r(S'_{1,n},S'_{2,n})\to 0$ as $n \to \infty$.
\end{itemize}
We can omit the justification for the above results as it can follow a similar approach as in Lemma \ref{prop:lower-distinguish}. This leads to the conclusion of the theorem.   
\end{proof}

%%%%%%%%%%%%%%%%%%%%%%%%%%%%%%%%%%%%%%%%%%%%%%%%%%%%%%%%%%%%
% \section{PROOFS FOR AUXILIARY RESULTS}
\section{Proof of Auxiliary Results}
\label{appendix:ProofsforAuxiliaryResults}
\subsection{Proof of Density Estimation Rate in Proposition \ref{prop:density-rate}}
\label{appendix:ConvergenceRateofDensityEstimation}

% \subsection{General theory for the Proof of Theorem \ref{theorem:ConvergenceRateofDensityEstimation}}
% \label{appendix:ConvergenceRateofDensityEstimation}

% At first we restate Proposition \ref{theorem:ConvergenceRateofDensityEstimation}:

% \begin{proposition}
% \label{appendixtheorem:ConvergenceRateofDensityEstimation}
% Assume that the function $f_0$ is bounded with tail 
% $\mathbb{E}_X
% \left(
% -\log f_0(Y|h (X,\eta_0),\nu_0)
% \right)
% \gtrsim
% Y^q
% $
% for almost surely $Y\in\mathcal{Y}$
% for some $q>0$.
% and $f$ is the density function of an univariate Gaussian distribution.
%     % Assume the following assumption holds:
%     % A2. Given a universal constant $J > 0$, there exists $N > 0$, possibly depending on  $\Xi$, such that for all $n \geq N$ and all $\epsilon > (\log(n)/n)^{1/2}$, we have $\mathcal{J}_B(\epsilon, \overline{P}^{1/2}(\Xi, \epsilon)) \leq J \sqrt{n} \epsilon^2$.
% Then, there exists a constant $C > 0$ depending only on $\Xi$ such that for all $n \geq 1$,
% \begin{align*}
%     \sup_{\Gs\in\Xi}
%     \mathbb{E}_{p_{ \Gs}}
%     h(p_{ \widehat{G}_n},p_{ \Gs})
%     \leq
%     C\sqrt{\log n/n}.
% \end{align*}
% \end{proposition}

% \subsection*{Notation and Preliminaries}

We first recall some standard notation that will be used throughout the proof. Let $(\mathcal{P}, d)$ be a metric space equipped with a metric on $\mathcal{P}$. For any $\epsilon>0$, an $\epsilon$-net of $(\mathcal{P}, d)$ is a collection of balls of radius $\epsilon$ whose union contains the entire space $\mathcal{P}$. The \emph{covering number} $N(\epsilon, \mathcal{P}, d)$ is defined as the smallest number of such ball needed to cover $\mathcal{P}$, and the associated  \emph{entropy number} is given by $$H(\epsilon, \mathcal{P}, d) := \log N(\epsilon, \mathcal{P}, d).$$
The \emph{bracketing number} $N_B(\epsilon, \mathcal{P}, d)$ is defined as the smallest integer $n$ for which there exists pairs $\{(\underline{f}_i, \overline{f}_i)\}_{i=1}^n$ satisfying $\underline{f}_i < \overline{f}_i$, $d(\underline{f}_i, \overline{f}_i) < \epsilon$, such that every element of $\mathcal{P}$ lies within at least one of these brackets. The associated \emph{bracketing entropy} is given by $$H_B(\epsilon, \mathcal{P}, d) := \log N_B(\epsilon, \mathcal{P}, d).$$

When $\mathcal{P}$ consists of probability density functions, we equip it with the $d$ to be the $L^2(m)$ metric, where $m$ denotes the Lebesgue measure. Particularly, let $\mathcal{P}(\Xi) := \{ p_{\lambda} : \lambda \in \Xi \}$, and define the symmetrized density
\(
\bar{p}_\lambda := \frac{1}{2}(p^* + p_\lambda),
\)
where $p^*$ denotes the true density. We then define the following sets:
$\overline{\mathcal{P}}(\Xi) := \{ \bar{p}_\lambda : \lambda \in \Xi \}$ 
and
$\overline{\mathcal{P}}^{1/2}(\Xi) := \{ \bar{p}_\lambda^{1/2} : \bar{p}_\lambda \in \overline{\mathcal{P}}(\Xi) \}.$
% \begin{align*}
%     \overline{\mathcal{P}}(\Xi) &:= \{ \bar{p}_\lambda : \lambda \in \Xi \}, \\
%     \overline{\mathcal{P}}^{1/2}(\Xi) &:= \{ \bar{p}_\lambda^{1/2} : \bar{p}_\lambda \in \overline{\mathcal{P}}(\Xi) \}.
% \end{align*}
To analyze convergence rates, we focus on a localized form of the symmetrized class:
\(
\overline{\mathcal{P}}^{1/2}(\Xi, \epsilon) := \{ \bar{p}_\lambda^{1/2} \in \overline{\mathcal{P}}^{1/2}(\Xi) : h(\bar{p}_\lambda, p^*) \leq \epsilon \},
\)
where $h(\cdot,\cdot)$ represents the Hellinger metric. 

We then quantify the complexity of this class using the \emph{bracketing entropy integral} introduced in \cite{Vandegeer-2000}:$$
\mathcal{J}_B(\epsilon, \overline{\mathcal{P}}^{1/2}(\Xi, \epsilon), m) := \int_{\epsilon^2 / 2^{13}}^{\epsilon} \sqrt{H_B(u, \overline{\mathcal{P}}^{1/2}(\Xi, \epsilon), m)}   du \vee \epsilon,
$$ where $a \vee b := \max\{a, b\}$. For simplicity, we suppress the dependence on $m$ whenever it is unambiguous from the context.

The proof begins by deriving upper bounds for the covering number and the bracketing entropy.
\begin{lemma}
\label{applemma:convergence-rate-2}
   Suppose that $\Xi$ is a bounded subset of $\mathbb{R}^d\times\mathbb{R} \times \mathbb{R}^{q\times K}$, and $\epsilon \in (0,1/2)$, then  
\begin{enumerate}[(i)]
    \item $\log N(\epsilon,\mathcal{P}(\Xi),\Vert\cdot\Vert_\infty)\lesssim\log (1/\epsilon)$,
    \item $H_B(\epsilon,\mathcal{P}(\Xi),h)\lesssim\log (1/\epsilon)$.
\end{enumerate}
\end{lemma}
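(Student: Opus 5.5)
The plan for (i) is to build a finite $\epsilon$-net in parameter space and transfer it to $\mathcal{P}(\Xi)$ through a Lipschitz bound of the map $G \mapsto p_G(y=s\mid x)$. Since $\Xi$ is bounded (and we work on its compact closure) and $\mathcal{X}$ is bounded, the gating weight $\exp(\beta^\top x+\tau)/(1+\exp(\beta^\top x+\tau))$ is a smooth function of $(\beta,\tau)$. Its partial derivatives are bounded uniformly in $x\in\mathcal{X}$. The adapter expert $f(y=s\mid x;\eta)$ is a softmax of $h(x,\eta_j)$. Assuming, as is implicit in the paper's differentiability requirements, that $h$ is continuously differentiable in $\eta$ with $\sup_{x\in\mathcal{X},\eta}\|\partial h/\partial\eta\|<\infty$, the softmax Jacobian is bounded by $1$ in operator norm. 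The known term $f_0$ does not depend on $G$. A mean value argument then gives
\[
\sup_{x,s}|p_G(y=s\mid x)-p_{G'}(y=s\mid x)|\le L\,\|G-G'\|
\]
for a constant $L$ depending only on $\Xi$, $\mathcal{X}$ and $h$. Covering $\Xi\subset\mathbb{R}^{d+1+qK}$ by Euclidean balls of radius $\epsilon/L$ needs at most $(C/\epsilon)^{d+1+qK}$ balls. Taking logarithms yields $\log N(\epsilon,\mathcal{P}(\Xi),\|\cdot\|_\infty)\lesssim \log(1/\epsilon)$, with the dimension absorbed into the constant.

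For (ii), I will convert the sup-norm net into brackets. Let $\{p_{G_1},\dots,p_{G_N}\}$ be an $\eta'$-net in $\|\cdot\|_\infty$, with $\eta'\le\epsilon$ to be chosen. For each $i$, define the brackets
\[
\underline{f}_i(y\mid x):=\max\{p_{G_i}(y\mid x)-\eta',0\},\qquad \overline{f}_i(y\mid x):=\min\{p_{G_i}(y\mid x)+\eta',1\}.
\]
Every $p_G$ lies in some bracket $[\underline{f}_i,\overline{f}_i]$, and $\overline{f}_i-\underline{f}_i\le 2\eta'$ pointwise. The Hellinger size of a bracket is controlled by $h^2(\underline{f},\overline{f})\le \tfrac12\int(\overline{f}-\underline{f})\,d(x,y)$. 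This uses $(\sqrt a-\sqrt b)^2\le |a-b|$, with the integral taken over $\mathcal{X}\times[K]$ against the covariate law times counting measure. The integral is at most $K\eta'$ because $\mathcal{X}$ carries a probability measure, which gives $h(\underline{f}_i,\overline{f}_i)\lesssim\sqrt{\eta'}$.

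Choosing $\eta'=c\,\epsilon^2$ makes each bracket have Hellinger size at most $\epsilon$. Then
\[
H_B(\epsilon,\mathcal{P}(\Xi),h)\le \log N(c\epsilon^2,\mathcal{P}(\Xi),\|\cdot\|_\infty)\lesssim\log(1/\epsilon^2)\lesssim \log(1/\epsilon),
\]
using (i).

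The main obstacle is the uniform Lipschitz constant in (i). It requires bounded $\eta$-derivatives of $h$ uniformly over $x\in\mathcal{X}$ and the compact parameter set, which is automatic for the paper's examples (tanh, sigmoid, GELU, linear) on bounded domains. I would state this regularity explicitly. If $h$ is only continuous, I would instead rely on uniform continuity on the compact set $\overline{\mathcal{X}}\times\overline{\Xi}$, but that yields only a finite covering number, not the $\log(1/\epsilon)$ rate. So the smoothness assumption is genuinely needed.
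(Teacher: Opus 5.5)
Your proposal is correct and follows the paper's route. For (i) it pushes a polynomial-size parameter net through a uniform Lipschitz bound on $G\mapsto p_G$, and for (ii) it builds the brackets $[\max\{p_i-\eta',0\},\min\{p_i+\eta',1\}]$ around a sup-norm net, with two differences in detail: your conversion $h(\underline{f}_i,\overline{f}_i)\lesssim\sqrt{\eta'}$, which forces $\eta'\asymp\epsilon^2$, is more careful than the paper's step $h\le\|\cdot\|_1$ (false for small distances, where only $h^2\lesssim\|\cdot\|_1$ holds), and your explicit bounded-derivative assumption on $h$ supplies the Lipschitz property the paper merely asserts, neither change affecting the $\log(1/\epsilon)$ rate.
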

\begin{proof}[Proof of Lemma \ref{applemma:convergence-rate-2}]
\textbf{Part (i)}. Firstly, we consider two sets $\Theta_1 = \{\eta \in \mathbb{R}^{d\times K}: (\beta,\tau,\eta) \in \Xi\}$ and $\Theta_2 = \{(\beta,\tau) \in \mathbb{R}^d \times \mathbb{R}: (\beta,\tau,\eta) \in \Xi\}$. From the hypothesis about compactness of $\Xi$, we have $\Theta_1$ and $\Theta_2$ are both compact sets. Consequently, one can construct $\epsilon$-covers $\Theta_{1\epsilon}$ and $\Theta_{2\epsilon}$ of $\Theta_1$ and $\Theta_2$, respectively, such that 
\begin{equation*}
    |\Theta_{1\epsilon}| \lesssim \mathcal{O}((1/\epsilon)^{d\times K}) \text{ and } |\Theta_{2\epsilon}| \lesssim \mathcal{O}((1/\epsilon)^{d+1}).  
\end{equation*}
To construct a $\epsilon$-cover of $\mathcal{P}(\Xi)$, we can follow the construction below. Consider a mixing measure of the form $G = \delta_{(\beta,\tau)} \times \sum_{j=1}^K\delta_{\eta_j}$, we introduce the  measure $\overline{G} = \delta_{(\overline{\beta},\overline{\tau})} \times \sum_{j=1}^K\delta_{{\overline{\eta}}_j}$ where $(\overline{\beta},\overline{\tau}) \in \Theta_{1\epsilon}$ be the closet point to $(\beta,\tau)$ in $\Theta_{1\epsilon}$, and $\eta \in \Theta_{2\epsilon}$ be the closet point to $\eta$ in $\Theta_{2\epsilon}$. Consider the set $\mathcal{A}$ defined as 
\begin{equation*}
    \mathcal{A} := \{g_{G} \in \mathcal{P}(\Xi):(\overline{\beta},\overline{\tau}) \in \Theta_{1\epsilon}, \overline{\eta} \in \Theta_{2\epsilon}\},
\end{equation*}
the it is obvious that $g_{\bar{G}} \in \mathcal{A}$. We now demonstrate that $\mathcal{A}$ forms an $\epsilon$ cover of the metric space $(\mathcal{P}(\Theta), \|\cdot\|_1)$ though it may not be minimal. To this end, we seek an upper bound for the quantity $\|g_{G} - g_{\overline{G}}\|_1$.  

For the softmax function, let $\sigma(X,\beta,\tau) := \exp(\beta^\top X +\tau)/(1+\exp(\beta^\top X +\tau))$. This function is infinitely differentiable and, subsequently, Lipschitz in a compact set. As a result, we have 
\begin{equation*}
    \|\sigma(X,\beta,\tau) - \sigma(X,\overline{\beta} ,\overline{\tau})\|_{\infty} := \sup_{X \in \mathcal{X}} \left|\dfrac{\exp(\beta^\top X +\tau)}{1+\exp(\beta^\top X +\tau} - \dfrac{\exp(\overline{\beta}^\top X +\overline{\tau})}{1+\exp(\overline{\beta}^\top X +\overline{\tau})}\right|\lesssim \epsilon.
\end{equation*}
For a general softmax function, it is similarly possible to achieve the result about the Lipschitz property: 
\begin{equation*}
    \|f(Y=s|X,\eta) - f(Y=s|X,\overline{\eta})\|_\infty \lesssim \epsilon. 
\end{equation*}
From this, remind that the Lipschitz property is preserved through the product of two bounded function, we have
\begin{align*}
    \|p_G(Y=s|X) - p_{\overline{G}}(Y=s|X)\|_{\infty} &\leq \|\sigma(X,\beta,\tau) -  \sigma(X,\overline{\beta} ,\overline{\tau})\|_{\infty}\|f_0(Y=s|X,\eta_0)\|_{\infty}\\
    &+ \|\sigma(X,\beta,\tau)f(Y=s|X,\eta) -  \sigma(X,\overline{\beta} ,\overline{\tau})f(Y=s|X,\overline{\eta})\|_{\infty}\\
    &\lesssim \epsilon. 
\end{align*}
Thus, we can bound the $\ell_1$ distance between $g_G$ and its approximation $g_{\overline{G}}$ by 
\begin{equation*}
    \|g_G-g_{\overline{G}}\|_{1}\leq \int_{\mathcal{X}}\left(\sum_{s=1}^K \|p_G(Y=s|X) - p_{\overline{G}}(Y=s|X)\|_{\infty}\right)dX \lesssim \epsilon,
\end{equation*}
i.e. $\|g_G-g_{\overline{G}}\|_{1}\lesssim \epsilon$, which means that $\mathcal{A}$ is $\epsilon$-cover of the metric space $(\mathcal{P}(\Xi),\|\cdot\|_1)$. As a result, we can use the cardinality of $\mathcal{A}$ to bound the covering number of $(\mathcal{P}(\Xi),\|\cdot\|_1)$:
\begin{equation*}
    N(\epsilon,\mathcal{P}(\Xi),\|\cdot\|_1) \lesssim |\mathcal{A}| = |\Omega_{1\epsilon}| \times |\Omega_{2\epsilon}| \leq \mathcal{O}((1/\epsilon)^{q\times K +d+1}).
\end{equation*}
Finally, we achieve that $\log N(\epsilon,\mathcal{P}(\Xi),\|\cdot\|_1) \lesssim \log(1/\epsilon)$. 

\textbf{Part (ii). } For $\epsilon$ fixed, consider a $\eta$-cover $\{p_1,\ldots,p_N\}$ of $\mathcal{P}(\Xi)$, where $N := N(\eta,\mathcal{P}(\Xi),\|\cdot\|_1)$, where $\eta$ will be chosen later. We now turn to the construction of brackets of the form $[L_i(Y|X), U_i(Y|X)]$ for $1\leq i\leq N$ given below:
\begin{equation*}
    p_{L,i}(Y=s|X) := \max\{p_i(Y=s|X)-\eta,0\}, \quad p_{U,i}(Y=s|X) := \min\{p_i(Y=s|X)+\eta,1\}, \quad 1\leq s\leq K.
\end{equation*}
This construction guarantees that $\mathcal{P}(\Xi) \subset \cup_{i=1}^N[p_{L,i}(Y=s|X),p_{U,i}(Y=s|X)]$ and $|p_{U,i}(Y=s|X) - p_{L,i}(Y=s|X)| \leq 2\eta$. Moreover, we have 
\begin{equation*}
    \|p_{U,i}(\cdot|X) - p_{L,i}(\cdot|X)\|_1 = \sum_{s=1}^K |p_{U,i}(Y=s|X) - p_{L,i}(Y=s|X)|\leq 2K\eta. 
\end{equation*}
Recall that $H_B(2K\eta,\mathcal{P}(\Xi),\|\cdot\|_1)$  denotes the logarithm of the minimal number of brackets of width $2K\eta$ needed to cover $\mathcal{P}(\Xi)$. Consequently, we have 
\begin{equation*}
    H_B(2K\eta,\mathcal{P}(\Xi),\|\cdot\|_1) \leq \log N(\eta,\mathcal{P}(\Xi),\|\cdot\|_1) \overset{\text{part (i)}}{\lesssim} \log(1/\eta).
\end{equation*}
Choose $\eta = \epsilon/(2K)$, we achieve that $H_B(\epsilon,\mathcal{P}(\Xi),\|\cdot\|_1) \lesssim \log(1/\epsilon)$. In addition, $h \leq \|\cdot\|_1$ (Hellinger distance is less than $\ell_1$ distance), we thus arrive at the desired conclusion:
\begin{equation*}
    H_B(\epsilon,\mathcal{P}(\Xi),h) \lesssim \log(1/\epsilon).
\end{equation*}
This completes our proof. 
\end{proof}
Lemma \ref{applemma:convergence-rate-2} is the main ingredient for the estimation of the \textit{bracketing entropy integral}, which lead to a general result about density convergence as in Proposition \ref{prop:density-rate}

\begin{lemma}
\label{applemma:convergence-rate-1}
(i). ({Estimation of bracketing entropy integral}) There exists a universal constant $J > 0$ and a constant $N>0$, which may depend on $\Xi$, such that for all $n \geq N$ and all $\epsilon > (\log(n)/n)^{1/2}$, we have 
\begin{align}
\label{assumption:A2}
   \mathcal{J}_B(\epsilon, \overline{P}^{1/2}(\Xi, \epsilon)) \leq J \sqrt{n} \epsilon^2. 
\end{align}    
(ii). (Model convergence rate) One can find a constant $C > 0$ that depends solely on $\Xi$, for which the following holds for all $n\geq1$,
\begin{align*}
    \sup_{\Gs\in\Xi}
    \mathbb{E}_{p_{ \Gs,n}}
    \bbE_X[d_{H}(p_{ \widehat{G}_n}(\cdot|X),p_{ \Gs}(\cdot|X))]
    \leq
    C(\log n/n)^{1/2}.
\end{align*}

\end{lemma}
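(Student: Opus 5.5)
For part (i), the plan is to bound the local bracketing entropy by the global one and then integrate. Since $\overline{\mathcal{P}}^{1/2}(\Xi,\epsilon)\subset\overline{\mathcal{P}}^{1/2}(\Xi)$, it suffices to control $H_B(u,\overline{\mathcal{P}}^{1/2}(\Xi),m)$.

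\emph{Step 1: transfer brackets.} Lemma~\ref{applemma:convergence-rate-2}(ii) provides brackets $[p_L,p_U]$ for $\mathcal{P}(\Xi)$ of Hellinger width $u$. Averaging with $p^*$ gives brackets $[\tfrac12(p^*+p_L),\tfrac12(p^*+p_U)]$ for $\overline{\mathcal{P}}(\Xi)$. Taking square roots gives brackets for $\overline{\mathcal{P}}^{1/2}(\Xi)$. The elementary inequality $(\sqrt a-\sqrt b)^2\le |a-b|$, together with the fact that $\bar p$ dominates $p^*/2$, shows the $L^2(m)$ width of these brackets is at most a constant times $u$. Here $m$ is the product of the marginal law of $X$ with counting measure on $[K]$, which is a finite measure since $\mathcal X$ is bounded. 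Therefore $H_B(u,\overline{\mathcal{P}}^{1/2}(\Xi,\epsilon),m)\lesssim \log(1/u)$ for $u\in(0,1/2)$.

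\emph{Step 2: integrate.} This gives
\begin{align*}
\mathcal{J}_B(\epsilon,\overline{\mathcal{P}}^{1/2}(\Xi,\epsilon))\lesssim \int_{\epsilon^2/2^{13}}^{\epsilon}\sqrt{\log(1/u)}\,du\vee\epsilon \le \epsilon\sqrt{\log(2^{13}/\epsilon^2)}\vee \epsilon .
\end{align*}
\emph{Step 3: compare with $J\sqrt n\,\epsilon^2$.} For $\epsilon>(\log n/n)^{1/2}$ we have $\log(1/\epsilon)\lesssim \log n$, so the right-hand side is $\lesssim \epsilon\sqrt{\log n}\le \sqrt n\,\epsilon^2$. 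The last inequality holds because $\epsilon\sqrt n\ge\sqrt{\log n}$. This yields \eqref{assumption:A2} for $n\ge N$, with $N$ chosen so that $\epsilon<1/2$ and $\log n\ge 1$.

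For part (ii), the plan is to invoke Theorem~7.4 of \cite{Vandegeer-2000} in its conditional-density form. That result says that if the entropy integral condition \eqref{assumption:A2} holds with $\Psi(\epsilon)=\sqrt n\,\epsilon^2$ dominating the integral and $\Psi(\epsilon)/\epsilon^2$ nondecreasing, then with $\delta_n=(\log n/n)^{1/2}$ satisfying $\sqrt n\,\delta_n^2\ge c\Psi(\delta_n)$, the MLE obeys
\begin{align*}
\mathbb{P}\bigl(\mathbb{E}_X[d_H(p_{\widehat G_n}(\cdot|X),p_{G_*}(\cdot|X))]>\delta\bigr)\le c\exp(-n\delta^2/c^2)\quad\text{for all }\delta\ge\delta_n .
\end{align*}
This step uses Jensen to pass from $\mathbb{E}_X[d_H^2]$ to $\mathbb{E}_X[d_H]$. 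The constants depend only on $\Xi$ via Lemma~\ref{applemma:convergence-rate-2}, uniformly in $G_*$.

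The expectation bound then follows by integrating the tail: $\mathbb{E}[Z]\le \delta_n+\int_{\delta_n}^\infty c\,e^{-n\delta^2/c^2}d\delta\lesssim \delta_n$. Since the constants are uniform, taking the supremum over $G_*\in\Xi$ costs nothing. Finitely many small $n$ are absorbed because $d_H\le1$.

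The main obstacle I expect is Step 1: making the bracketing transfer rigorous. Lemma~\ref{applemma:convergence-rate-2} states brackets in $\|\cdot\|_1$/Hellinger, but Van de Geer's integral requires $L^2(m)$ brackets of square-rooted symmetrized densities. If the direct inequality is awkward, I will use $\|\sqrt{\bar p_U}-\sqrt{\bar p_L}\|_{L^2}^2\le\|\bar p_U-\bar p_L\|_{L^1}$ and reparametrize $u\mapsto u^2$. This only changes the constant inside the logarithm.
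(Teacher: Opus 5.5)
Your route is essentially the paper's. Part (i) follows the same chain: the localized class sits inside the global class, brackets are transferred from $\mathcal{P}(\Xi)$ to $\overline{\mathcal{P}}^{1/2}(\Xi)$ by averaging with $p^*$, and then $\sqrt{\log(1/u)}$ is integrated. For the transfer, the paper uses the pointwise inequality $(\sqrt{\bar p_U}-\sqrt{\bar p_L})^2\le \tfrac12(\sqrt{p_U}-\sqrt{p_L})^2$, which follows from $\bar p_U\ge p_U/2$ and $\bar p_L\ge p_L/2$. This preserves the bracket width directly and needs no lower bound on $p^*$. In part (ii) you cite Theorem 7.4 of van de Geer. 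The paper instead re-derives that theorem via Lemmas 4.1--4.2 and the peeling argument with Theorem 5.11 (its Lemma~\ref{applemma:convergence-rate-3}), so the two are the same argument packaged differently.

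One step does not go through as you stated it. With $\Psi(\epsilon)=\sqrt n\,\epsilon^2$ (or $J\sqrt n\,\epsilon^2$), the hypothesis $\sqrt n\,\delta_n^2\ge c\,\Psi(\delta_n)$ reduces to $cJ\le 1$. Here $c$ is van de Geer's large universal constant, and part (i) gives no such smallness. Your Step~3 only shows $\mathcal{J}_B\le C_\Xi\sqrt n\,\epsilon^2$, where $C_\Xi$ comes from the dimension-dependent entropy constant. Near $\epsilon=(\log n/n)^{1/2}$, your estimate of $\mathcal{J}_B/(\sqrt n\,\epsilon^2)$ is of order $C_\Xi$, not small. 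Also, the monotonicity hypothesis in Theorem 7.4 is that $\Psi(\delta)/\delta^2$ be \emph{non-increasing}, not nondecreasing.

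The repair is routine:
\begin{enumerate}
\item Take $\Psi(\delta):=C_\Xi\,\delta\sqrt{\log(2^{13}/\delta^2)}\vee\delta$ from your Step~2; its ratio to $\delta^2$ is non-increasing.
\item Take $\delta_n:=M(\log n/n)^{1/2}$ with $M$ large enough (roughly $M\gtrsim cC_\Xi$) that $\sqrt n\,\delta_n^2\ge c\,\Psi(\delta_n)$ for $n\ge N$.
\end{enumerate}
The tail bound then holds for $\delta\ge\delta_n$, and $M$ is absorbed into the final constant $C$. The paper's peeling argument needs the same smallness implicitly. Its check of condition (ii) ends with a bound $2^6Jt$, which must be at most $t/(C^2(C_1+1))$, and this is never verified. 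So you are not missing an idea the paper supplies, but your invocation of Theorem 7.4 should include this adjustment.
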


\begin{proof}[Proof of Lemma \ref{applemma:convergence-rate-1}] \textbf{Part (i). }  First, we observe that 
\begin{align*}
    H_B(\delta,\overline{\mathcal{P}}^{1/2}(\Xi,\delta),\mu)
    \overset{\text{(a)}}{\leq}
    H_B(\delta,\overline{\mathcal{P}}^{1/2}(\Xi),\mu)\overset{\text{(b)}}{=}
    H_B\left(\frac{\delta}{\sqrt{2}},\overline{\mathcal{P}}(\Xi),d_{H}\right)\overset{\text{(c)}}{\leq} H_B(\delta,{\mathcal{P}}(\Xi),d_{H})
   \overset{\text{(d)}}{\lesssim} \log\left(\frac{1}{\delta} \right),
\end{align*}
where (a) is from the fact that $\overline{\mathcal{P}}^{1/2}(\Xi,\delta) \subset \overline{\mathcal{P}}^{1/2}(\Xi)$, (b) is due to the definition of Hellinger distance, (c) is from the inequality $$ d_{H}^2 \left( \frac{f_1 + f^*}{2}, \frac{f_2 + f^*}{2} \right) \leq \frac{d_{H}^2(f_1, f_2)}{2},$$ and (d) is correct thanks to part (i) of Lemma \ref{applemma:convergence-rate-2}. 
Thus, from the definition of bracketing entropy integral, we have 
\begin{align*}
    \mathcal{J}_B(\epsilon, \overline{\mathcal{P}}^{1/2}(\Xi, \epsilon), m) = \int_{\epsilon^2 / 2^{13}}^{\epsilon} \sqrt{H_B(u, \overline{\mathcal{P}}^{1/2}(\Xi, \epsilon), m)}   du \vee \epsilon \lesssim \epsilon \left(\log\left(\dfrac{2^{13}}{\epsilon^2}\right)\right)^{1/2} < n\epsilon^2,
\end{align*}
for all $d\epsilon > \sqrt{\displaystyle\frac{\log n}{n}}$. This completes our proof.

\textbf{Part (ii). } In this part, we introduce the empirical process $\mu_n(\hlbgn)$ defined as 
\begin{align*}
    \mu_n(\hlbgn):=\sqrt{n}\int_{\plbgs>0}
    \frac{1}{2}\log\left(\frac{\barphlbgn}{\plbgs}\right)(\barphlbgn-\plbgs)d(X,Y).
\end{align*}
By Lemma 4.1 and 4.2 in \cite{Vandegeer-2000}, we have
\begin{align*}
    \frac{1}{16}\bbE_X[d_{H}^2(\phlbgn(\cdot|X),\plbgs(\cdot|X))]\leq \bbE_X[d_{H}^2(\barphlbgn(\cdot|X),\plbgs(\cdot|X))] \leq \frac{1}{\sqrt{n}}\mu_n(\hlbgn),
\end{align*}
Thus, we can assess the behavior of Hellinger's distance $\bbE_X[d_{H}^2(\phlbgn(\cdot|X),\plbgs(\cdot|X))]$ through the empirical process $\mu_n(\hlbgn)$. We have for any $\delta>\delta_n:=\sqrt{\log n/n}$, we have
\begin{align*}
    &\mathbb{P}_{G_{*,n}}(\bbE_X[d^2_{H}(\phlbgn(\cdot|X),\plbgs(\cdot|X))]\geq\delta)
    \\&\leq\mathbb{P}_{G_{*,n}}
    \left(
    \mu_n(\hlbgn)-\sqrt{n}\bbE_X[d_{H}^2(\phlbgn(\cdot|X),\plbgs(\cdot|X))]\geq0,
    \bbE_X[d_{H}^2(\phlbgn(\cdot|X),\plbgs(\cdot|X))]\geq\frac{\delta}{4}
    \right)
    \\&\leq\mathbb{P}_{G_{*,n}}
    \left(
    \sup_{\lbg:\bbE_X[d_{H}(\bar{p}_{\lbg}(\cdot|X),\plbgs(\cdot|X))]\geq\delta/4}
    \left[
    \mu_n(\lbg)-\sqrt{n}\bbE_X[d_{H}^2(\barplbg(\cdot|X),\plbgs(\cdot|X))]
    \right]\geq0
    \right)
    \\&\leq\sum_{s=0}^S\mathbb{P}_{G_{*,n}}
    \left(
    \sup_{\lbg:2^s\delta/4\leq \bbE_X[d_{H}^2(\bar{p}_{\lbg}(\cdot|X),\plbgs(\cdot|X))]\leq 2^{s+1}\delta/4}
    \left|
    \mu_n(\lbg)
    \right|
    \geq\sqrt{n}2^{2s}(\frac{\delta}{4})^2
    \right)
    \\&\leq\sum_{s=0}^S\mathbb{P}_{G_{*,n}}
    \left(
    \sup_{\lbg: \bbE_X[d^2_{H}(\bar{p}_{\lbg}(\cdot|X),\plbgs(\cdot|X))]\leq 2^{s+1}\delta/4}
    \left|
    \mu_n(\lbg)
    \right|
    \geq\sqrt{n}2^{2s}(\frac{\delta}{4})^2
    \right)
\end{align*}
where $S$ is a smallest number such that $2^S\delta/4 > 1$, i.e. $S = \lceil \log_2(4/\delta)\rceil $. To estimate the tail of this empirical process can be estimated, we can utilize Lemma \ref{applemma:convergence-rate-3}. This result along with its proof is formulated in Theorem 5.11, \cite{Vandegeer-2000}. 
\begin{lemma}
\label{applemma:convergence-rate-3}
    Let $R > 0$, $k \geq 1$ and  
 $\mathcal{G}$ is a subset in $\Xi$ where $\Gs\in\mathcal{G}\subset\Xi$ .
 Given $C_1<\infty$, for all $C$ sufficiently large, and for $n\in\mathbb{N}$ and $t>0$ is in the following range
 \begin{align}
     t\leq(8\sqrt{n}R)\wedge(C_1\sqrt{n}R^2/K),
 \end{align}
\begin{align}
     t\geq C^2(C_1+1)\Bigg( R\vee\int^{R}_{t/(2^6\sqrt{n})}H_B^{1/2}\big(\frac{u}{\sqrt{2}},\linephalf(\Xi,R),\mu \big) du\Bigg),
\end{align}
then we will have
\begin{align}
    \mathbb{P}_{G_{*,n}}
    \Big(
    \sup_{G\in\mathcal{G},\bbE_X[d_{H}(\barplbg(\cdot|X),\plbgs(\cdot|X))]\leq R}
    |\mu_n(G)|\geq t
    \Big)
    \leq
    C\exp
    \left(
    -\frac{t^2}{C^2(C_1+1)R^2}
    \right).
\end{align}
\end{lemma}

% \textcolor{blue}{,as $h(\barplbg,\plbgs)\leq1$} 

Using Lemma \ref{applemma:convergence-rate-3} with $R=2^{s+1}\delta, C_1=15$ and $t=\sqrt{n}2^{2s}(\delta/4)^2$, we can therefore verify that each condition in Lemma 3. Concretely, for condition (i) in Lemma 3, it is met since $2^{s-1} \delta / 4 \leq 1$ for all $s \leq S$. For the condition (ii), it is still satisfied since
\begin{align*}
    \int^R_{t/2^6\sqrt{n}}
    H_B^{1/2}\left(\frac{u}{\sqrt{2}},\mathcal{P}^{1/2}(\Xi,R),\mu  \right)
    du\vee2^{s+1}\delta &=\sqrt{2}\int^{R/\sqrt{2}}_{R^2/2^{13}}
    H_B^{1/2}\left({u},\mathcal{P}^{1/2}(\Xi,R),\mu  \right)
     du\vee2^{s+1}\delta\\
     &\leq2\mathcal{J}_B\left(R,\mathcal{P}^{1/2}(\Xi,R),\mu  \right)
     \\&\leq2J\sqrt{n}2^{2s+1}\delta^2
     \\&=2^6Jt.
\end{align*}
Given that two conditions in Lemma \ref{applemma:convergence-rate-3}, we have
\begin{align}
    \mathbb{P}_{G_{*,n}}
    \left( \bbE_X[d_{H}(\phlbgn(\cdot|X),\plbgs(\cdot|X))]>\delta \right)
    \leq C\sum_{s=0}^{\infty}
    \exp\left(-\frac{2^{2s}n\delta^2}{2^{14}C^2} \right)
    \leq 
    c\exp\left( -\frac{n\delta^2}{c} \right),
\end{align}
here $c$ is a large constant not depending on $\Gs$.
The bound on supremum of expectation can be derived as: 
\begin{align*}
    \mathbb{E}_{p_{G_{*,n}}}\bbE_X[d_{H}({\phlbgn(\cdot|X),\plbgs(\cdot|X)})]
    &=\int^{\infty}_{0}\mathbb{P}
    \left( \bbE_X[d_{H}({\phlbgn(\cdot|X),\plbgs(\cdot|X)})]>\delta \right)d\delta
    \\&\leq\delta_n+c\int^{\infty}_{\delta_n}\exp \left(-\frac{n\delta^2}{c^2} \right)d\delta
    \\& \leq \Tilde{c}\delta_n,
\end{align*}
here $\Tilde{c}$ is independent from $\lbgs$ and $\delta_n:=(\log n/n)^{1/2}$.
So we can conclude that 
\begin{align*}
    \sup_{\Gs\in\Xi}
    \mathbb{E}_{p_{ \Gs,n}}
    \bbE_X[d_{H}(p_{ \widehat{G}_n}(\cdot|X),p_{ \Gs}(\cdot|X))]
    \leq
    C(\log n/n)^{1/2}.
\end{align*}
\end{proof}

\subsection{Proof of Proposition \ref{appendix:identifiability_proof}}
\label{appendix:identifiability_proof}
In this appendix, we derive a proof of Proposition \ref{prop:identifiability} about the identifiability of softmax-gated contaminated mixture of multinomial logistic experts model. 
\begin{proof}

Consider the following equation 
\begin{align}
\label{eqn:identifiability_equation}
\nonumber
    &\dfrac{1}{1+\exp(\beta^\top x+\tau)}\cdot \dfrac{\exp(h_0(x,\eta_{0s}))}{\sum_{i=1}^K\exp(h_0(x,\eta_{0s}))} + \dfrac{\exp(\beta^\top x+\tau)}{1+\exp(\beta^\top x+\tau)}\cdot \dfrac{\exp(h(x,\eta_{s}))}{\sum_{i=1}^K\exp(h(x,\eta_{s}))} \\
    =&\dfrac{1}{1+\exp(\beta'^\top x+\tau')}\cdot \dfrac{\exp(h_0(x,\eta_{0s}))}{\sum_{i=1}^K\exp(h_0(x,\eta_{0s}))} + \dfrac{\exp(\beta^\top x+\tau)}{1+\exp(\beta'^\top x+\tau')}\cdot \dfrac{\exp(h(x,\eta'_{s}))}{\sum_{i=1}^K\exp(h(x,\eta'_{s}))}. 
\end{align}

Using result from \cite{GruenLeisch2008Identifiability}, we achieve that two models share the same gating set of the mixing measure: 
\begin{equation*}
    \left\{\dfrac{1}{1+\exp(\beta^\top x+\tau)},\dfrac{\exp(\beta^\top x+\tau)}{1+\exp(\beta^\top x+\tau)}\right\} = \left\{\dfrac{1}{1+\exp(\beta'^\top x+\tau)},\dfrac{\exp(\beta'^\top x+\tau)}{1+\exp(\beta'^\top x+\tau)}\right\}.
\end{equation*}
As a result, we have $\beta^\top x + \tau = \beta'^{\top} + \tau'$, which implies $\beta = \beta'$ and $\tau = \tau'$. Recall that $u_0(Y=s|X;\eta_0) = \dfrac{\exp(h_0(x,\eta_{0s}))}{\sum_{i=1}^K\exp(h_0(x,\eta_{0s}))}$ and $u(Y=s|X;\beta,\eta) = \exp(\beta^\top x) \dfrac{\exp(h(x,\eta_{s}))}{\sum_{i=1}^K\exp(h_0(x,\eta_{s}))}$. Equation \eqref{eqn:identifiability_equation} above becomes
\begin{equation*}
    u_0(Y = s  |  X;\eta_0)  + \exp(\beta^\top x +\tau)u(Y=s|X;\beta,\eta) = u_0(Y = s  |  X; \eta_0)  + \exp(\beta^\top x +\tau)u(Y=s|X;\beta,\eta') 
\end{equation*}
Using the same argument as in the proof of Proposition 2.1, \cite{nguyen2023demystifying}, it is straightforward to obtain $\eta  = \eta'$. In other word, $G = G'$. 
\end{proof}

\medskip
% \section{Additional Experimental Results}
% \label{app:exp}
% In addition to the experiments presented in the main text, conduct additional experiments of heterogeneous setting with the ground truth parameters independent of the sample size. More specifically, the true parameters are $\beta^*=\frac{1}{\sqrt{d}}\mathbf{1}_d$ and $\tau^*=0.2$, with $\eta_0^{*}$ and $\eta^*$ fixed matrices of size $d \times K$. We define $\eta_0^{*}$ and $\eta^*$ as follows:
% \[
% \eta^* = e_2 v^\top,
% \qquad
% \eta_0^* = e_2 u^\top,
% \]
% where $e_2\in\mathbb{R}^d$ denotes the second canonical basis vector.
% The corresponding vectors are given by
% \[
% v = (-1,  0.7,  0),
% \qquad
% u = (1,  -0.5,  0).
% \]
\section{Additional Experimental Results}
\label{app:exp}

In this appendix, we present additional experimental results that complement the findings reported in Section~\ref{sec:experiments}. 
While the main text focuses on settings in which the ground truth parameters vary with the sample size, 
here we consider a heterogeneous-expert regime with fixed ground truth parameters in order to isolate the effect of sample size alone on estimation accuracy.

\paragraph{Experimental setup.}
We consider the same two-component gated mixture-of-experts model as in the main experiments.
In contrast to Section~\ref{sec:experiments}, the ground truth parameters are fixed and do not depend on the sample size $n$.

Specifically, we set:
$$\beta^* = \frac{1}{\sqrt{d}}\mathbf{1}_d, \qquad \tau^* = 0.2, $$
and fix the expert parameters $\eta_0^*$ and $\eta^*$ as matrices in $\mathbb{R}^{d\times K}$ defined by:
\[
\eta^* = e_2 v^\top,
\qquad
\eta_0^* = e_2 u^\top,
\]
where $e_2\in\mathbb{R}^d$ denotes the second canonical basis vector.
The corresponding class-specific coefficient vectors are
\[
v = (-1,  0.7,  0),
\qquad
u = (1,  -0.5,  0).
\]
All other aspects of the data generation process and estimation procedure follow those described in Section~\ref{sec:experiments}.
\paragraph{Results.}
Figure~\ref{fig:appendix_heterogeneous_fixed} reports the estimation errors as functions of the sample size $n$ in the heterogeneous setting with fixed ground truth parameters. \\
Consistent with the results in the main text, we observe stable and well-behaved convergence patterns for all parameters. In particular, the estimation errors of the mixing-weight parameter $\exp(\tau)$, the gating parameters $\beta$, and the adapter expert parameters $\eta$ all exhibit approximately polynomial decay with respect to $n$. The fitted slopes on the log--log scale are close to the parametric benchmark rate $O(n^{-1/2})$, indicating efficient statistical estimation in this regime.\\
These results further demonstrate that the favorable convergence behavior observed in Section~4 persists even when the ground truth parameters are fixed, suggesting that expert heterogeneity rather than the specific scaling of parameters with $n$ plays the dominant role in governing estimation accuracy.
\begin{figure}[t]
    \centering
    \begin{subfigure}[t]{\textwidth}
        \centering
        \includegraphics[width=\textwidth]{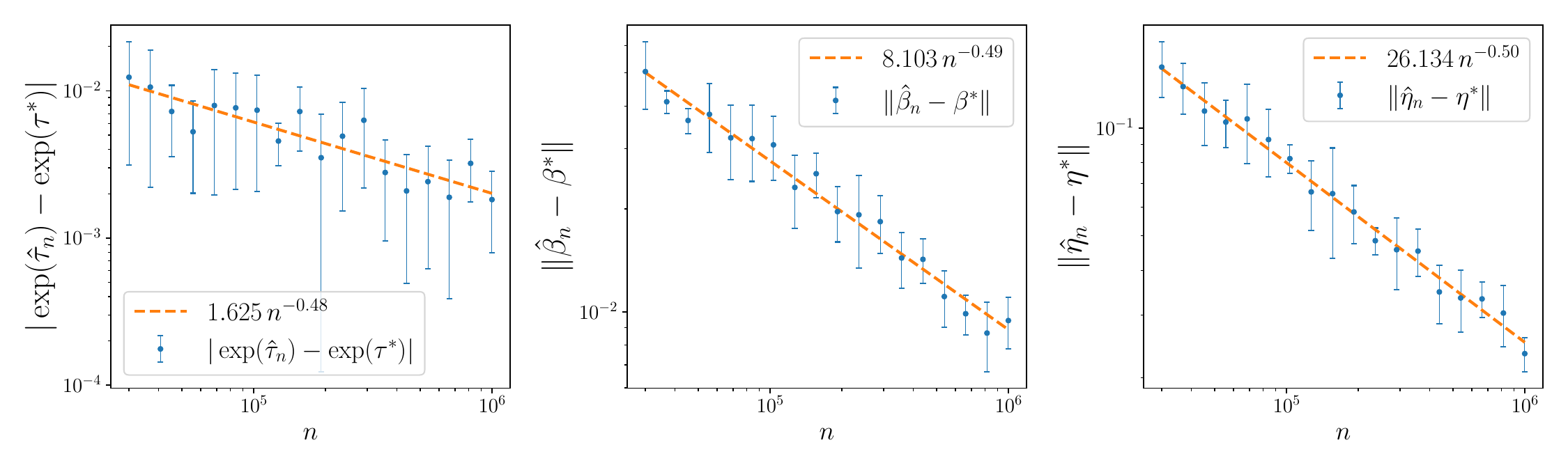}
        \caption{Case (i): $h_0(x)=x^\top\eta_0^*$, $h(x)=\tanh(x^\top\eta^*)$.}
        \label{fig:appendix_case1}
    \end{subfigure}
    \\
    % \hfill
    \begin{subfigure}[t]{\textwidth}
        \centering
        \includegraphics[width=\textwidth]{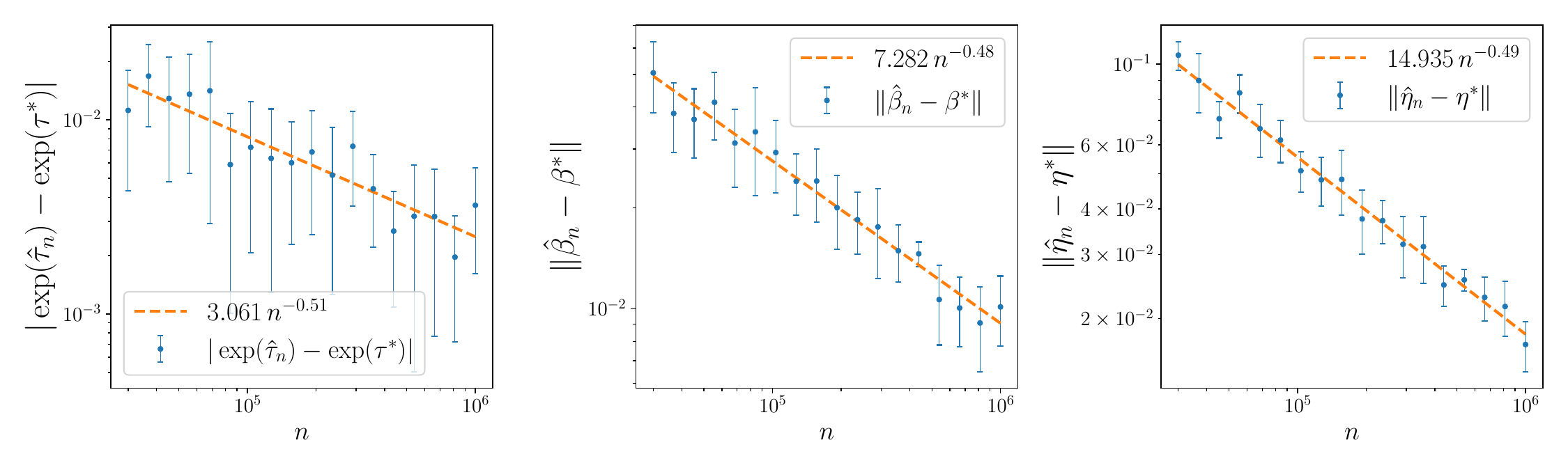}
        \caption{Case (ii): $h_0(x)=\tanh(x^\top\eta_0^*)$, $h(x)=x^\top\eta^*$.}
        \label{fig:appendix_case2}
    \end{subfigure}

    \caption{
    Heterogeneous setting with fixed ground truth parameters.
    Log--log plots of parameter estimation errors as functions of the sample size $n$.
    Blue dots denote mean estimation errors across independent runs with error bars indicating one standard deviation,
    while orange dashed lines represent fitted power-law trends.
    }
    \label{fig:appendix_heterogeneous_fixed}
\end{figure}
\section{Computational Infrastructure}
\label{app:infras}
All numerical experiments were performed on a MacBook Pro equipped with an Apple M4 Max chip.

\bibliography{references}

@inproceedings{hu2022lora,
  title     = {{L}o{R}{A}: {L}ow-{R}ank {A}daptation of {L}arge {L}anguage {M}odels},
  author    = {Edward J. Hu and Yelong Shen and Phillip Wallis and Zeyuan Allen-Zhu and Yuanzhi Li and Shean Wang and Lu Wang and Weizhu Chen},
  booktitle = {Proceedings of the 10th International Conference on Learning Representations (ICLR)},
  year      = {2022}
}

@InProceedings{nguyen2024deviated,
  title = 	 {On Parameter Estimation in Deviated {G}aussian Mixture of Experts},
  author =       {Nguyen, Huy and Nguyen, Khai and Ho, Nhat},
  booktitle = 	 {Proceedings of The 27th International Conference on Artificial Intelligence and Statistics},
  year = 	 {2024},
}

@inproceedings{ han2024fusemoe,
title={FuseMoE: Mixture-of-Experts Transformers for Fleximodal Fusion},
author={Xing Han and Huy Nguyen and Carl William Harris and Nhat Ho and Suchi Saria},
booktitle={The Thirty-eighth Annual Conference on Neural Information Processing Systems},
year={2024},
url={https://openreview.net/forum?id=jfE7XCE89y}
}

@inproceedings{yan2025contaminated,
    title = {Understanding Expert Structures on Minimax Parameter Estimation in Contaminated Mixture of Experts},
    author = {Fanqi Yan and Huy Nguyen and Dung Le and Pedram Akbarian and Nhat Ho},
    booktitle = {Proceedings of The 28th International Conference on Artificial Intelligence and Statistics},
    year = 2025
}

@inproceedings{nguyen2025cosine,
    author = {Huy Nguyen and Pedram Akbarian and Trang Pham and Trang Nguyen and Shujian Zhang and Nhat Ho},
    title = {Statistical Advantages of Perturbing Cosine Router in Mixture of Experts},
    booktitle = {International Conference on Learning Representations},
    year = 2025
}

@book{Vandegeer-2000,
author= "S. van de Geer",
title="Empirical Processes in M-estimation",
publisher= "Cambridge University Press",
year="2000"
}

@article{Ho-Nguyen-EJS-16,
	author="N. Ho and X. Nguyen",
	title="On strong identifiability and convergence rates of parameter estimation in finite mixtures",
	journal="Electronic Journal of Statistics",
	volume="10",
	pages="271-307",
	year="2016"
}

@article{Jacob_Jordan-1991,
	author="R. A. Jacobs and M. I. Jordan and S. J. Nowlan and G. E. Hinton",
	title="Adaptive mixtures of local experts",
	journal="Neural Computation",
	volume="3",
	page="79-87",
	year="1991"
}

@inproceedings{
ceron2024rl,
title={Mixtures of Experts Unlock Parameter Scaling for Deep {RL}},
author={Johan Samir Obando Ceron and Ghada Sokar and Timon Willi and Clare Lyle and Jesse Farebrother and Jakob Nicolaus Foerster and Gintare Karolina Dziugaite and Doina Precup and Pablo Samuel Castro},
booktitle={Forty-first International Conference on Machine Learning},
year={2024}
}

@INPROCEEDINGS{shazeer2017topk,
   AUTHOR = "N. Shazeer and A. Mirhoseini and K. Maziarz and A. Davis and Q. Le and G. Hinton and J. Dean",
   TITLE = "Outrageously Large Neural Networks: The Sparsely-Gated Mixture-of-Experts Layer",
   BOOKTITLE = 	 "In International Conference on Learning Representations", 
   YEAR = 	 2017
}

@article{fedus2021switch,
	author="W. Fedus and B. Zoph and N. Shazeer",
	title="Switch Transformers: Scaling to Trillion Parameter Models with Simple and Efficient Sparsity",
	journal="Journal of Machine Learning Research",
        volume = "23",
        pages = "1-39",
	year="2022"
}

@inproceedings{Du_Glam_MoE,
  title = "GLaM: Efficient Scaling of Language Models with Mixture-of-Experts",
  author = "N. Du and Y. Huang and A. M. Dai and S. Tong and D. Lepikhin and Y. Xu and M. Krikun and Y. Zhou and A. Yu and O. Firat and B. Zoph and L. Fedus and M. Bosma and Z. Zhou and T. Wang and E. Wang and K. Webster and M. Pellat and K. Robinson and K. Meier-Hellstern and T. Duke and L. Dixon and K. Zhang and Q. Le and Y. Wu and Z. Chen and C. Cui",
  booktitle = "ICML",
  year = "2022"
}

@ARTICLE{zeevi1998approximation,
  author={Zeevi, A.J. and Meir, R. and Maiorov, V.},
  journal={IEEE Transactions on Information Theory}, 
  title={Error bounds for functional approximation and estimation using mixtures of experts}, 
  year={1998},
  volume={44},
  number={3},
  pages={1010-1025}}

@inproceedings{
yun2024flexmoe,
title={Flex-MoE: Modeling Arbitrary Modality Combination via the Flexible Mixture-of-Experts},
author={Sukwon Yun and Inyoung Choi and Jie Peng and Yangfan Wu and Jingxuan Bao and Qiyiwen Zhang and Jiayi Xin and Qi Long and Tianlong Chen},
booktitle={The Thirty-eighth Annual Conference on Neural Information Processing Systems},
year={2024}
}

@article{deepseekv3,
  title={Deepseek-v3 technical report},
  author={DeepSeek-AI and others},
  journal={arXiv preprint arXiv:2412.19437},
  year={2024}
}

@inproceedings{lepikhin_gshard_2021,
	title = {{GS}hard: {Scaling} {Giant} {Models} with {Conditional} {Computation} and {Automatic} {Sharding}},
	booktitle = {International {Conference} on {Learning} {Representations}},
	author = {D. Lepikhin and H. Lee and Y. Xu and D. Chen and O. Firat and Y. Huang and M. Krikun and N. Shazeer and Z. Chen},
	year = {2021},
}

@inproceedings{Riquelme2021scalingvision,
 author = {C. Riquelme and J. Puigcerver and B. Mustafa and M. Neumann and R. Jenatton and A. Susano Pint and D. Keysers and N. Houlsby},
 booktitle = {Advances in Neural Information Processing Systems},
 pages = {8583--8595},
 publisher = {Curran Associates, Inc.},
 title = {Scaling Vision with Sparse Mixture of Experts},
 volume = {34},
 year = {2021}
}

@inproceedings{chen2022theory,
 author = {Chen, Zixiang and Deng, Yihe and Wu, Yue and Gu, Quanquan and Li, Yuanzhi},
 booktitle = {Advances in Neural Information Processing Systems},
 editor = {S. Koyejo and S. Mohamed and A. Agarwal and D. Belgrave and K. Cho and A. Oh},
 pages = {23049--23062},
 publisher = {Curran Associates, Inc.},
 title = {Towards Understanding the Mixture-of-Experts Layer in Deep Learning},
 volume = {35},
 year = {2022}
}

@inproceedings{nguyen2023demystifying,
      title={Demystifying Softmax Gating Function in {G}aussian Mixture of Experts}, 
      author={Huy Nguyen and TrungTin Nguyen and Nhat Ho},
      booktitle = "Advances in Neural Information Processing Systems",
      year={2023}
}

@inproceedings{nguyen2024general,
      title={A General Theory for Softmax Gating Multinomial Logistic Mixture of Experts}, 
      author={Huy Nguyen and Pedram Akbarian and TrungTin Nguyen and Nhat Ho},
      booktitle ="Proceedings of the ICML",
      year={2024}
}

@inproceedings{chow_mixture_expert_2023,
	title = {A {Mixture}-of-{Expert} {Approach} to {RL}-based {Dialogue} {Management}},
	url = {https://openreview.net/forum?id=4FBUihxz5nm},
	booktitle = {The {Eleventh} {International} {Conference} on {Learning} {Representations}},
	author = {Chow, Yinlam and Tulepbergenov, Azamat and Nachum, Ofir and Gupta, Dhawal and Ryu, Moonkyung and Ghavamzadeh, Mohammad and Boutilier, Craig},
	year = {2023},
}

@inproceedings{
li2023sparse,
title={Sparse Mixture-of-Experts are Domain Generalizable Learners},
author={Bo Li and Yifei Shen and Jingkang Yang and Yezhen Wang and Jiawei Ren and Tong Che and Jun Zhang and Ziwei Liu},
booktitle={The Eleventh International Conference on Learning Representations },
year={2023}
}

@article{mendes2011convergence,
    author = {Mendes, Eduardo F. and Jiang, Wenxin},
    title = {On Convergence Rates of Mixtures of Polynomial Experts},
    journal = {Neural Computation},
    volume = {24},
    number = {11},
    pages = {3025-3051},
    year = {2012},
    month = {11},
    issn = {0899-7667},
    doi = {10.1162/NECO_a_00354},
    url = {https://doi.org/10.1162/NECO_a_00354},
    eprint = {https://direct.mit.edu/neco/article-pdf/24/11/3025/871220/neco_a_00354.pdf},
}

@article{gadat2020parameter,
author = {S{\'e}bastien Gadat and Jonas Kahn and Cl{\'e}ment Marteau and Cathy Maugis-Rabusseau},
title = {{Parameter recovery in two-component contamination mixtures: The $L^{2}$ strategy}},
volume = {56},
journal = {Annales de l'Institut Henri Poincaré, Probabilités et Statistiques},
number = {2},
publisher = {Institut Henri Poincaré},
pages = {1391 -- 1418},
year = {2020},
doi = {10.1214/19-AIHP1007},
URL = {https://doi.org/10.1214/19-AIHP1007}
}

@article{jordan1994hierarchical,
  title={Hierarchical mixtures of experts and the EM algorithm},
  author={Jordan, Michael I and Jacobs, Robert A},
  journal={Neural computation},
  volume={6},
  number={2},
  pages={181--214},
  year={1994},
  publisher={MIT Press}
}

@article{GruenLeisch2008Identifiability,
  title   = {Identifiability of Finite Mixtures of Multinomial Logit Models with Varying and Fixed Effects},
  author  = {Gr{\"u}n, Bettina and Leisch, Friedrich},
  journal = {Journal of Classification},
  volume  = {25},
  number  = {2},
  pages   = {225--247},
  year    = {2008},
  doi     = {10.1007/s00357-008-9012-1}
}

@inproceedings{
yan2025on,
title={On Minimax Estimation of Parameters in Softmax-Contaminated Mixture of Experts},
author={Fanqi Yan and Huy Nguyen and Le Quang Dung and Pedram Akbarian and Nhat Ho and Alessandro Rinaldo},
booktitle={The Thirty-ninth Annual Conference on Neural Information Processing Systems},
year={2025},
url={https://openreview.net/forum?id=Dx1qQ9OAbb}
}

@article{broyden1967quasi,
  title={Quasi-Newton methods and their application to function minimisation},
  author={Broyden, Charles G},
  journal={Mathematics of Computation},
  volume={21},
  number={99},
  pages={368--381},
  year={1967},
  publisher={JSTOR}
}

@article{fletcher1970new,
  title={A new approach to variable metric algorithms},
  author={Fletcher, Roger},
  journal={The computer journal},
  volume={13},
  number={3},
  pages={317--322},
  year={1970},
  publisher={Oxford University Press}
}

@article{goldfarb1970family,
  title={A family of variable-metric methods derived by variational means},
  author={Goldfarb, Donald},
  journal={Mathematics of computation},
  volume={24},
  number={109},
  pages={23--26},
  year={1970}
}

@article{shanno1970conditioning,
  title={Conditioning of quasi-Newton methods for function minimization},
  author={Shanno, David F},
  journal={Mathematics of computation},
  volume={24},
  number={111},
  pages={647--656},
  year={1970}
}

@article{hendrycks2016gaussian,
  title={Gaussian Error Linear Units (Gelus)},
  author={Hendrycks, D},
  journal={arXiv preprint arXiv:1606.08415},
  year={2016}
}

@inproceedings{devlin2019bert,
  title={Bert: Pre-training of deep bidirectional transformers for language understanding},
  author={Devlin, Jacob and Chang, Ming-Wei and Lee, Kenton and Toutanova, Kristina},
  booktitle={Proceedings of the 2019 conference of the North American chapter of the association for computational linguistics: human language technologies, volume 1 (long and short papers)},
  pages={4171--4186},
  year={2019}
}

@article{brown2020language,
  title={Language models are few-shot learners},
  author={Brown, Tom and Mann, Benjamin and Ryder, Nick and Subbiah, Melanie and Kaplan, Jared D and Dhariwal, Prafulla and Neelakantan, Arvind and Shyam, Pranav and Sastry, Girish and Askell, Amanda and others},
  journal={Advances in neural information processing systems},
  volume={33},
  pages={1877--1901},
  year={2020}
}

@article{bacharoglou2010,
 ISSN = {00029939, 10886826},
 URL = {http://www.jstor.org/stable/20721762},
 author = {Athanassia G. Bacharoglou},
 journal = {Proceedings of the American Mathematical Society},
 number = {7},
 pages = {2619--2628},
 publisher = {American Mathematical Society},
 title = {Approximation OF PROBABILITY DISTRIBUTIONS BY CONVEX MIXTURES OF GAUSSIAN MEASURES},
 volume = {138},
 year = {2010}
}

@inproceedings{
ersoy2025hdee,
title={{HDEE}: Heterogeneous Domain Expert Ensemble},
author={Oguzhan Ersoy and Jari Kolehmainen and Gabriel Passamani Andrade},
booktitle={ICLR 2025 Workshop on Modularity for Collaborative, Decentralized, and Continual Deep Learning},
year={2025},
url={https://openreview.net/forum?id=5ukL6nPcYe}
}

@inproceedings{wang-etal-2025-hmoe,
    title = "{HM}o{E}: Heterogeneous Mixture of Experts for Language Modeling",
    author = "Wang, An  and
      Sun, Xingwu  and
      Xie, Ruobing  and
      Li, Shuaipeng  and
      Zhu, Jiaqi  and
      Yang, Zhen  and
      Zhao, Pinxue  and
      Han, Weidong  and
      Kang, Zhanhui  and
      Wang, Di  and
      Okazaki, Naoaki  and
      Xu, Cheng-zhong",
    editor = "Christodoulopoulos, Christos  and
      Chakraborty, Tanmoy  and
      Rose, Carolyn  and
      Peng, Violet",
    booktitle = "Proceedings of the 2025 Conference on Empirical Methods in Natural Language Processing",
    month = nov,
    year = "2025",
    address = "Suzhou, China",
    publisher = "Association for Computational Linguistics",
    url = "https://aclanthology.org/2025.emnlp-main.1115/",
    doi = "10.18653/v1/2025.emnlp-main.1115",
    pages = "21943--21957",
    ISBN = "979-8-89176-332-6"
}

@ARTICLE{chen2025hetero,
  author={Chen, Bowen and Chen, Keyan and Yang, Mohan and Zou, Zhengxia and Shi, Zhenwei},
  journal={IEEE Geoscience and Remote Sensing Letters}, 
  title={Heterogeneous Mixture of Experts for Remote Sensing Image Super-Resolution}, 
  year={2025},
  volume={22},
  number={},
  pages={1-5},
  doi={10.1109/LGRS.2025.3557928}}

@article{li2024moe,
  title={MoE-CT: a novel approach for large language models training with resistance to catastrophic forgetting},
  author={Li, Tianhao and Li, Shangjie and Xie, Binbin and Xiong, Deyi and Yang, Baosong},
  journal={arXiv preprint arXiv:2407.00875},
  year={2024}
}

@inproceedings{zhao2024sparse,
  title={Sparse moe with language guided routing for multilingual machine translation},
  author={Zhao, Xinyu and Chen, Xuxi and Cheng, Yu and Chen, Tianlong},
  booktitle={The Twelfth International Conference on Learning Representations},
  year={2024}
}

@inproceedings{zhou2025moe,
  title={Moe-lpr: Multilingual extension of large language models through mixture-of-experts with language priors routing},
  author={Zhou, Hao and Wang, Zhijun and Huang, Shujian and Huang, Xin and Han, Xue and Feng, Junlan and Deng, Chao and Luo, Weihua and Chen, Jiajun},
  booktitle={Proceedings of the AAAI Conference on Artificial Intelligence},
  volume={39},
  number={24},
  pages={26092--26100},
  year={2025}
}

@inproceedings{cao2025m,
  title={M-MoE: Mixture of Mixture-of-Expert Model for CTC-based Streaming Multilingual ASR},
  author={Cao, Songjun and Wang, Xiong and Zhang, Yike and Zhang, Xiaoming and Ma, Long},
  booktitle={ICASSP 2025-2025 IEEE International Conference on Acoustics, Speech and Signal Processing (ICASSP)},
  pages={1--5},
  year={2025},
  organization={IEEE}
}
\bibliographystyle{abbrv}
\end{document}